\newtheorem{theorem}{Theorem}[section]
\newtheorem{definition}{Definition}[section]
\newtheorem{proposition}{Proposition}[section]
\newtheorem{lemma}{Lemma}[section]
\newtheorem{corollary}{Corollary}[section]
\newtheorem{remark}{Remark}[section]
\newtheorem{example}{Example}[section]
\begin{document}
\title{Optimality conditions and constraint qualifications for\\ cardinality constrained optimization problems}
\author{Zhuoyu Xiao\thanks{\baselineskip 9pt Department of Industrial and Manufacturing Engineering,  Pennsylvania State University, State College, PA, USA 168022. E-mail: zvx5148@psu.edu.} \ and \   Jane J. Ye\thanks{\baselineskip 9pt Corresponding author, Department of Mathematics
and Statistics, University of Victoria, Victoria, B.C., Canada V8W 2Y2. E-mail: janeye@uvic.ca.
The research of this author was partially supported by NSERC.}
}
\date{}

\maketitle
{\bf Abstract.} The cardinality constrained optimization problem (CCOP) is  an optimization problem where  the maximum number of nonzero components of any feasible point is bounded.
%The problem can be reformulated as a mathematical program where 
In this paper, 
%by rewriting the cardinality constraint as a constraint requiring that any feasible point must  lie in the union  of certain subspaces, 
we consider CCOP as a mathematical program with disjunctive subspaces constraints (MPDSC). Since a subspace is  
 a special case of a convex polyhedral set, MPDSC is a special case of
  the mathematical program with disjunctive constraints (MPDC).
 Using the  special structure of  subspaces, we are able to obtain more precise formulas for  the tangent and (directional) normal cones  for the  disjunctive set of subspaces.
We then obtain first and second order optimality conditions by using the corresponding results from MPDC.  Thanks to the  special structure of the subspace,   we are able to obtain some results for MPDSC that do not hold in general for MPDC. In particular we show that  the relaxed  constant positive linear dependence (RCPLD) is a sufficient condition for the metric subregularity/error bound property for MPDSC which is not true for MPDC in general. Finally we show that under all constraint qualifications presented in this paper,  certain exact penalization holds for CCOP.
%Our method is different from the relaxed complementarity-type reformulation in the literature. The first contribution of this paper is that
% we study stationarity conditions for MPDSC then apply them to CCOP. In particular, we obtain new S-stationarity and new M-stationarity for CCOP, which are sharper than those obtained from the relaxed complementarity-type reformulation. The second contribution of this paper is that w
%We obtain some new results for MPDSC, which do not hold for MPDC in general. In particular we show that the relaxed constant positive linear dependence (RCPLD) coincides with its piecewise version for MPDSC. Based on such result, we prove that RCPLD implies error bounds for MPDSC. 
%These results also hold for CCOP. 
 %All constraint qualifications for CCOP derived from MPDSC are weaker than those from the relaxed complementarity-type reformulation.

{\bf Key Words.} cardinality constrained optimization problems,   disjunctive subspaces constraints, necessary optimality conditions, constraint qualifications, metric subregularity, error bounds property, RCPLD.

{\bf 2020 Mathematics Subject Classification.} 49J52, 49J53, 90C26, 90C46.

% Section 1: Introduction
\section{Introduction}\label{section-1}
 In this paper, we consider the cardinality constrained optimization problem (CCOP) in the following form:
\begin{equation}\label{pro-1.1}
    \begin{aligned}
         \min_{x}\quad & f(x)\\
         \text{s.t.}\quad & g(x)\leq 0,\:h(x) = 0, \quad \|x\|_0\leq s,
    \end{aligned}
\end{equation}
where 
%$X\subseteq \mathbb{R}^n$ defines the standard constraint set
%\begin{equation*}
 %$  X := \{x\in \mathbb{R}^{n}\:\vert\:g(x)\leq 0,\:h(x) = 0\}$ with 
 $f:\mathbb{R}^{n}\to \mathbb{R}$, $g:\mathbb{R}^n\to \mathbb{R}^m$ and $h:\mathbb{R}^n\to \mathbb{R}^p$
%\end{equation*}
and $\|x\|_{0}$ is the number of nonzero elements in the vector $x$ (also called $l_0$-norm). We assume $s<n$, otherwise the cardinality constraint would be superfluous.  Unless otherwise mentioned, we assume that all functions are smooth. Two simple examples about the cardinality constraint are given in Figure \ref{two examples}.

\begin{figure}[t]
    \begin{minipage}{0.5\linewidth}
        \centering
        \includegraphics[width=0.5\textwidth]{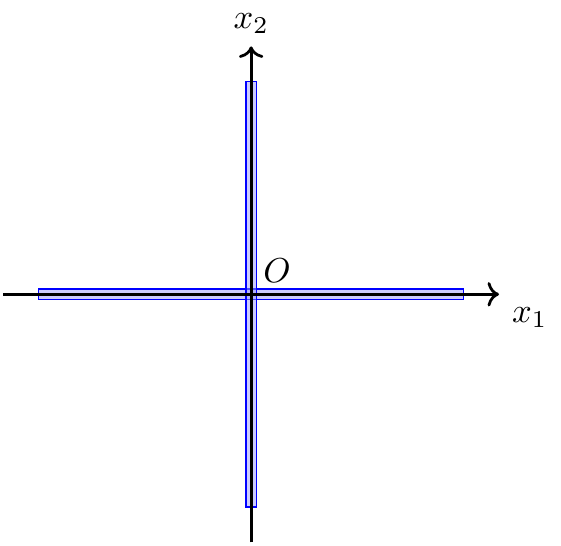}
    \end{minipage}
    \hfill
    \begin{minipage}{0.5\linewidth}
        \centering
        \includegraphics[width=0.5\textwidth]{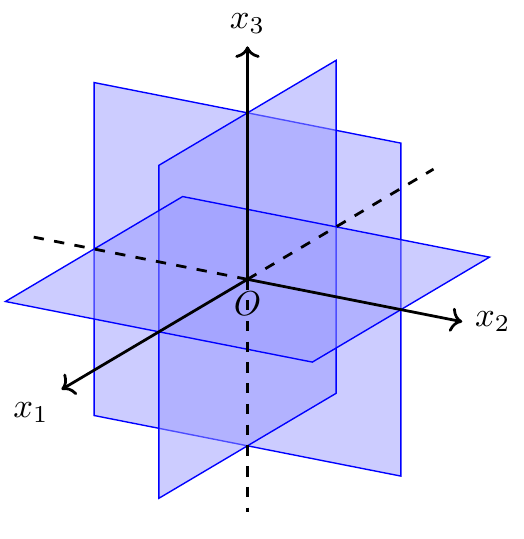}
    \end{minipage}
    \caption{Feasible set for the cardinality constraints: (i) $\|x\|_{0}\leq 1$ in $\mathbb{R}^2$; (ii) $\|x\|_{0}\leq 2$ in $\mathbb{R}^3$.}
    \label{two examples}
\end{figure}

 CCOPs have many   applications such as image processing, portfolio optimization, machine learning, and other related problems; see e.g.   \cite{Till-2021} and the references within for a survey of recent developments.
\par  CCOPs are intrinsically  nonconvex since even when all functions involved are convex, the feasible  region is still nonconvex. Moreover  problem  (\ref{pro-1.1}) can not be treated as a nonlinear program since $\|x\|_0$ is  a discontinuous function. 
%Even testing the feasibility of the problem (\ref{pro-1.1}) is known to be NP-complete \cite{bertsimas2009algorithm}. 
%There are some notable approaches in the literature for solving CCOP closely related to our work. 
%Some optimality conditions for CCOP are obtained by using various cones in variational analysis \cite{Bauschke2014, pan2015}. 
For the case where $X:= \{x\in \mathbb{R}^{n}\:\vert\:g(x)\leq 0,\:h(x) = 0\}=\mathbb{R}^n$, Beck and Eldar  \cite{Beck-2013} introduced three concepts of stationarity conditions for CCOP, and proposed some efficient numerical algorithms aimed at finding these stationary points. The results are extended to include a so-called symmetric  set constraint in \cite{Beck-Hallak,Lu2015}.  Pan et al. \cite{pan2015} studied CCOP where  $X=\mathbb{R}^n$ by reformulating the problem as
$\min_x f(x) \quad \mbox{s.t. } x\in S,$
where $S:=\{x\in \mathbb{R}^n|\|x\|_0\leq s\}$.  They then give the expressions for the Bouligrand/Clarke tangent cone and the regular/Clarke normal cone to $S$ and present corresponding first and second order  optimality conditions. In \cite{Bauschke2014}, the formula for the limiting normal cone to $S$ is given. Recently Pan et al. \cite{pan2017optimality},  formulated  CCOP  as the following mathematical program:
\begin{equation}\label{pro-4.1}
    \begin{aligned}
    \min_{x} \quad & f(x)\\
    \textrm{s.t.} \quad & g(x)\leq 0, h(x) = 0, x\in S:= \bigcup\limits_{\mathcal{I}\in \mathcal{I}_{s}} \mathbb{R}_{\mathcal{I}},
    \end{aligned}
\end{equation}
 where $\mathcal{I}_{s} := \{\mathcal{I}\subseteq \{1,2,\dots,n\}\:\vert\:|\mathcal{I}|=s\}$ represents all subsets of $\{1,2,\dots,n\}$ with cardinality equal to $s$ and $\mathbb{R}_{\mathcal{I}} := \textrm{span} \{e_{i}\:\vert\:i\in \mathcal{I}\}$. By using some tools from variational analysis, they obtained some first and second order  optimality conditions and studied constraint qualifications for the necessary optimality conditions.
Note that each $\mathbb{R}_{\mathcal{I}}$ is a subspace and hence problem (\ref{pro-4.1}) is a special case of the 
mathematical program with disjunctive constraints (MPDC).
%However, the biggest limitation of this work is its underlying assumption that $X=\mathbb{R}^n$.
Recently  Burdakov et al. \cite{Burd-2016} introduced a relaxed complementarity-type reformulation of CCOP and proposed some numerical algorithms.
%:
%\begin{equation}\label{pro-1.2}
%    \begin{aligned}
%    \min_{x,y} \quad & f(x)\\
%    \textrm{s.t.} \quad & g(x)\leq 0, h(x) = 0,\\
%                        & e^{T}y\geq n-s,\\
%                        & x_{i}y_{i} = 0,\: i = 1,\dots,n,\\
%                        & 0\leq y_{i}\leq 1,\: i = 1,\dots,n.
%    \end{aligned}
%\end{equation}
%where $y\in \mathbb{R}^n$ is an auxiliary variable and $e := (1,1,\dots,1)^{T}\in \mathbb{R}^n$. 
Based on this continuous reformulation,
% (\ref{pro-1.2}), 
some constraint qualifications and stationarity conditions for CCOP were introduced in \v{C}ervinka et al. \cite{Cerv-2016}. Some second-order necessary and sufficient optimality conditions for CCOP based on such reformulation are also introduced in Bucher and Kanzow  \cite{bucher2018second}. The relaxed complementarity-type reformulation is more computationally friendly than its original formulation. But  due to the extra variable introduced in the reformulation, the relaxed complementarity-type reformulation are not equivalent to the original problem in the sense of local optimality \cite{Burd-2016}.
%Although it was shown that 
%When $\|x^{\ast}\|_{0}=s$,  it was shown in \cite[Theorem 3.6]{Burd-2016}  that $x^*$ is a global optimal solution of CCOP if and only if there exists $ y^*$ such that $(x^{\ast},y^*)$ is a global optimal solution of  the reformulated problem (\ref{pro-1.2}). This equivalence fails if $\|x^{\ast}\|_{0}<s$ and a counter example is given in \cite[Example 3]{Burd-2016}).

%Motivated by Figure (\ref{two examples}), a

%\par We propose the new framework called mathematical programs with disjunctive subspaces constraints (MPDSC) whose constraint sets are the union of finitely many subspaces to investigate CCOP. In fact, MPDSC is a special case of mathematical programs with disjunctive constraints (MPDC \cite{Fleg-2007}). 

%%\par We would like to point out here that the disjunctive reformulation of CCOP was first mentioned in , and some new optimality conditions for CCOP based on such reformulation like the linear independence constraint qualification (LICQ) were derived in Mehlitz's work \cite[Section 5.3]{Mehl-2020-O} although they did not consider the general subspaces framework at all.

\par In this paper, we study optimality conditions and constraint qualifications for problem  (\ref{pro-1.1}) by its equivalent reformulation (\ref{pro-4.1}). Recently there are many developments in the optimality conditions and constraint qualifications for MPDC \cite{Gfre-2014,Benk-2021,bai2019directional,Xu-2021}, see \cite{liang2021ye} and the references within for a survey. 
%The stationarity conditions such as strong (S-) stationarity and Mordukhovich (M-) stationarity tailored for MPDC were introduced by Flegel et al. \cite{Fleg-2007}. 
%Also, several recent developments of constraint qualifications for MPDC are notable. 
%In Xu and Ye's work \cite{Xu-2021}, many constraint qualifications such as the relaxed constant positive linear dependence (RCPLD) for general disjunctive programs were introduced. Further, they also introduced the piecewise RCPLD under which error bounds hold provided that inequality constraints and sets are Clarke regular.
% Recently directional variational analysis has become a popular topic in optimization. Gfrerer et al. \cite{Gfre-2014} introduced the linear independence constraint qualification in direction $d$ (LICQ ($d$)) for MPDC. Bai et al. \cite{bai2019directional} introduced the directional quasi/pseudo-normality as sufficient conditions for the metric subregularity, and these results were simplified by Benko at el. \cite{Benk-2021} when considering the so-called mathematical programs with ortho-disjunctive constraints (MPODC), a special case of MPDC.

\par The main contributions of this paper are summarized as follows:
\begin{itemize}
   \item[(i)] We study the mathematical programs with disjunctive subspaces constraints (MPDSC) which include CCOP as well as the mathematical programs with switching constraints (MPSC) \cite{mehlitz2020stationarity}  as special cases. We explore the structure of the disjunctive set of subspaces to obtain formulas for various tangent and normal cones which do not hold if one of the subspaces is only a convex polyhedral set.
   %Under such framework, we study various optimality conditions then apply them to CCOP. 
%In particular, we show that our new S-stationarity and new M-stationarity for CCOP obtained from the disjunctive subspaces reformulation are sharper than those obtained from the relaxed complementarity-type reformulation \cite{Burd-2016}.
     Unlike in smooth systems with equality and inequality constraints, RCPLD may not imply the error bound property for the disjunctive system. Recently Xu and Ye \cite[Section 5.3]{Xu-2021} proposed RCPLD and the piecewise RCPLD for MPDCs, and showed that the  piecewise RCPLD is a sufficient condition for the error bound property.  Inspired by the fact that RCPLD coincides with the piecewise  RCPLD for MPSC \cite[Section 5.3]{Xu-2021}, we  show that this result actually holds for any system that can be formulated as a constraint system of  MPDSC.  
   %Moreover we show that many constraint qualifications coincide with their piecewise versions for MPDSC. 
 Based on such result, we prove that for MPDSC, RCPLD is a sufficient condition for error bounds. 
 %These results are also of independent interest.
   %Note that this result does not hold for MPDC in general.
% Since CCOP is a special case of MPDSC, the above results also hold for CCOP. 
%    Different from new stationarity conditions, our new constraint qualification for CCOP are weaker than those from the relaxed complementarity-type reformulation \cite{Cerv-2016}. 
\item[(ii)] Applying the results from  MPDSC, we obtain first and second order optimality conditions for CCOP and some new sufficient conditions for the error bound for CCOP. The  first order necessary and the second order necessary and sufficient optimality conditions we present  are sharper and  hold under weaker constraint qualifications than the corresponding results  in Pan et al. \cite{pan2017optimality}.  Moreover under all constraint qualifications presented in this paper, the exact penalty holds for CCOP. That is, if $x^*$ is a local minimizer of CCOP and one of the constraint qualifications discussed in this paper holds, then there exists a constant $\mu\geq 0$ such that $x^*$ is also a local minimizer of the exact penalty problem:
\begin{equation}
    \begin{aligned}
         \min_{x}\quad & f(x) +\mu\Bigl( \sum\limits_{i=1}^{m} \max\{g_{i}(x),0\} + \sum\limits_{i=1}^{p} |h_{i}(x)|\Bigr )\\
         \text{s.t.}\quad &  \|x\|_0\leq s.
    \end{aligned}\label{penalty}
\end{equation}
\end{itemize} 

\par The remainder of this paper is organized as follows. In Section \ref{section-2}, we derive formulas for various tangent and normal cones that  will  be used in this paper. In Section \ref{section-3}, we study optimality conditions and constraint qualifications for MPDSC. We also show that RCPLD is a sufficient condition for error bounds for MPDSC, an important result which does not hold for MPDC in general. In Section \ref{section-4}, we first reformulate CCOP as MPDSC, then apply the results in Section \ref{section-3} to CCOP. 
%In particular, we obtain new S-stationarity and new M-stationarity which are sharper than the old ones from the relaxed complementarity-type reformulation. Further, we also have the result that RCPLD implies the error bounds for CCOP. Comparisons between the new constraint qualifications for CCOP obtained from subspaces disjunctive reformulation and the old ones are mentioned at the end of this section.

% Section 2: Preliminary results
\section{Notation and preliminary results}\label{section-2}

\par The notations we adopt are standard. Given a point $x\in \mathbb{R}^n$,  $\mathbb{B}_{\varepsilon}(x)$ stands for the open ball of radius $\varepsilon$ centered at $x$, while the symbol $\mathbb{B}$ simply stands for the open unit ball centered at the origin. We denote by $\nabla f(x)$ the gradient of a continuously differentiable function $f:\mathbb{R}^n \to \mathbb{R}$ at $x$ and by $d_{\Omega}(x)$ the distance between the point $x$ and the set $\Omega$. Unless otherwise stated, $\|\cdot\|$ denotes an arbitrary norm in $\mathbb{R}^n$ and the notation $\langle\cdot, \cdot\rangle$ denotes the inner product. For a given nonempty set $A\subseteq \mathbb{R}^n$, we use notations cl$A$, cone$A$, and span$A$ to represent the closure of $A$, the conic hull of $A$, and the span of $A$, that is, the smallest subspace of $\mathbb{R}^n$ comprising $A$. For any given set $B$ with finite elements, we denote the number of elements in $B$ by $|B|$. Given   finite index sets $I, J$,  a pair  $(\{v_i\}_{i\in I}, \{u_i\}_{i\in J})$ of family of vectors $\{v_i\}_{i\in I}, \{u_i\}_{i\in J}$ is said to be positive linearly dependent if there exist scalars $\{\alpha_i\}_{i\in I}$ and $\{\beta_i\}_{i\in J}$ with $\alpha_i\geq 0$ for any $i\in I$, not all equal to zero such that $\sum_{i\in I}\alpha_i v_i+\sum_{i\in J}\beta_i u_i=0$. 
% The readers may refer to the reference of variational analysis in \cite{Bonn-2000, Clar-1990, Mord-2006, Mord-2018, Rock-2009}. 

\par
The main purpose of this section is to derive some formulas for various tangent cones and normal cones to the disjunctive set of subspaces $S=\bigcup_{r=1}^{R}S_{r}$ where each $S_r$ is a subspace of $\mathbb{R}^n$. These formulas will be needed in the next sections of the paper.

\subsection{Formulas for tangent and normal cones}

Given a closed set $\Omega\subseteq \mathbb{R}^n$ and $x\in \Omega$, we denote the Bouligand tangent cone, the Fr\'echet/regular normal cone and the Mordukhovich/limiting normal cone to $\Omega$ at $x$ by $T_{\Omega}(x)$, $\hat{N}_{\Omega}(x)$ and $N_{\Omega}(x)$, respectively. When $\Omega$ is convex, we denote by  ${N}_{\Omega}(x)$  the normal cone in the sense of convex analysis. We  refer the reader to the standard reference of variational analysis in \cite{Bonn-2000, Clar-1990, Mord-2006, Mord-2018, Rock-2009} for their precise definitions.

\begin{proposition}\label{thm-3.1}
    Let $x^{\ast}\in S$. 
    Then 
%    the tangent cone, the regular normal cone, and the limiting normal cone to $S$ at $x^{\ast}$ are calculated as
    \begin{align}
        T_{S}(x^{\ast}) &= \bigcup\limits_{r\in I(x^{\ast})} T_{S_{r}}(x^{\ast}) = \bigcup\limits_{r\in I(x^{\ast})} S_{r}, \label{eqn-3.4}\\
        \hat{N}_{S}(x^{\ast}) &= \bigcap\limits_{r\in I(x^{\ast})} {N}_{S_{r}}(x^{\ast}) = \bigcap\limits_{r\in I(x^{\ast})} S_{r}^{\perp}, \label{eqn-3.5}\\
        N_{S}(x^{\ast}) &= \bigcup\limits_{r\in I(x^{\ast})} N_{S_{r}}(x^{\ast}) = \bigcup\limits_{r\in I(x^{\ast})} S_{r}^{\perp}, \label{eqn-3.6}
    \end{align}
    respectively, where  $I(x^{\ast}) := \{ r\in \{1,\dots,R\} \:\vert\: x^{\ast}\in S_{r} \}$ is the active index set for $S$ at $x^{\ast}$.
\end{proposition}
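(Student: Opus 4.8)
The plan is to establish the three displayed identities in the order stated. In each line the second equality is immediate from two elementary facts about a subspace $L$: for any $x\in L$ one has $T_L(x)=L$, and, $L$ being convex, $\hat N_L(x)=N_L(x)=L^{\perp}$ (the polar cone of a subspace being its orthogonal complement); hence all the content sits in the first equality of each line. A preliminary observation will be used throughout: $S$ is closed as a finite union of subspaces, and since each $S_r$ is closed with $x^{\ast}\notin S_r$ for $r\notin I(x^{\ast})$, there is $\varepsilon>0$ with $\mathbb B_{\varepsilon}(x^{\ast})\cap S=\mathbb B_{\varepsilon}(x^{\ast})\cap\bigcup_{r\in I(x^{\ast})}S_r$; in particular only the active subspaces matter for all three cones, and $I(x')\subseteq I(x^{\ast})$ for every $x'\in S$ sufficiently near $x^{\ast}$.

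For \eqref{eqn-3.4}, the inclusion $\supseteq$ is trivial because $S_r\subseteq S$ and $x^{\ast}\in S_r$ give $T_{S_r}(x^{\ast})\subseteq T_S(x^{\ast})$ for each $r\in I(x^{\ast})$. For $\subseteq$, a tangent direction $d=\lim_k(x_k-x^{\ast})/t_k$ ($t_k\downarrow 0$, $x_k\in S$, $x_k\to x^{\ast}$) satisfies $x_k\in\bigcup_{r\in I(x^{\ast})}S_r$ for large $k$ by the preliminary observation, so by the pigeonhole principle a single $S_r$ with $r\in I(x^{\ast})$ contains a subsequence, along which $d\in T_{S_r}(x^{\ast})=S_r$. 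For \eqref{eqn-3.5}, I will use the general identity $\hat N_{\Omega}(x)=\bigl(T_{\Omega}(x)\bigr)^{\circ}$ together with \eqref{eqn-3.4} and the fact that the polar cone of a union is the intersection of the polar cones, obtaining $\hat N_S(x^{\ast})=\bigl(\bigcup_{r\in I(x^{\ast})}S_r\bigr)^{\circ}=\bigcap_{r\in I(x^{\ast})}S_r^{\circ}=\bigcap_{r\in I(x^{\ast})}S_r^{\perp}$.

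The identity \eqref{eqn-3.6} is the substantive one and its two inclusions have quite different flavors. For $\subseteq$: given $v\in N_S(x^{\ast})$, write $v=\lim_k v_k$ with $v_k\in\hat N_S(x'_k)$, $x'_k\to x^{\ast}$, $x'_k\in S$; for large $k$ each $x'_k$ lies in some $S_{r_k}$ with $r_k\in I(x^{\ast})$, and after passing to a subsequence we may assume $r_k\equiv r$. By monotonicity of the regular normal cone (if $A\subseteq B$ and $x\in A$ then $\hat N_B(x)\subseteq\hat N_A(x)$), $v_k\in\hat N_S(x'_k)\subseteq\hat N_{S_r}(x'_k)=S_r^{\perp}$, so $v\in S_r^{\perp}$. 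The reverse inclusion is where I expect the real work. Fix $r\in I(x^{\ast})$; the aim is to find $x'_k\to x^{\ast}$ with $x'_k\in S_r$ but $x'_k\notin S_{r'}$ for every $r'\neq r$. Granting this, the preliminary observation forces $I(x'_k)=\{r\}$, whence $\hat N_S(x'_k)=S_r^{\perp}$ by \eqref{eqn-3.5} applied at $x'_k$, and letting $k\to\infty$ shows $S_r^{\perp}\subseteq N_S(x^{\ast})$. Such a sequence exists because each $S_r\cap S_{r'}$ with $r'\in I(x^{\ast})\setminus\{r\}$ is a \emph{proper} subspace of $S_r$, and a nontrivial real vector space cannot be written as a union of finitely many proper subspaces; therefore $S_r\setminus\bigcup_{r'\neq r}S_{r'}$ is dense in $S_r$ and accumulates at $x^{\ast}\in S_r$. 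The delicate point — and the precise place where the geometry of subspaces, as opposed to general polyhedra, is used — is the claim that $S_r\cap S_{r'}$ is proper in $S_r$, i.e.\ that no active $S_r$ is contained in another; this is automatic for the coordinate subspaces $\mathbb R_{\mathcal I}$ arising from CCOP, all of which have dimension $s$, and is consistent with the authors' remark that \eqref{eqn-3.6} has no analogue when the pieces are merely convex polyhedral sets.
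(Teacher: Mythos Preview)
Your argument is correct and follows essentially the same route as the paper: the paper cites Aubin--Frankowska for the tangent-cone identity, obtains (\ref{eqn-3.5}) by polarization as you do, and for the reverse inclusion in (\ref{eqn-3.6}) produces, for each active $r$, a nearby point $x^{\ast}+tu_r\in S_r\setminus\bigcup_{j\neq r}S_j$ and invokes (\ref{eqn-3.5}) there. Your ``delicate point'' is well spotted: the paper simply asserts that such a $u_r$ exists ``since there are only finitely many subspaces,'' which tacitly relies on the same irredundancy hypothesis (no active $S_r$ contained in another $S_{r'}$) that you isolate---it is implicitly in force throughout and, as you note, automatic for the equal-dimensional pieces $\mathbb{R}_{\mathcal I}$ in the CCOP application.
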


\begin{proof}
The first equation in (\ref{eqn-3.4}) is from \cite[Table 4.1]{Aubin} and the second equation in (\ref{eqn-3.4}) is obvious since each $S_{r}$ is a subspace and hence $T_{S_r}(x^*)=S_r$ provided that $x^*\in S_r$. The proof of (\ref{eqn-3.5}) follows from the one for the tangent cone by polarization.

Now it remains to prove that (\ref{eqn-3.6}) holds.
The second equation in (\ref{eqn-3.6}) is obvious from the fact that each $S_{r}$ is a subspace for $r = 1,\dots,R$. We only need to prove the first equation.
The inclusion $N_{S}(x^{\ast}) \subseteq \bigcup\limits_{r\in I(x^{\ast})} N_{S_{r}}(x^{\ast})$ follows from  \cite[Lemma 2.2]{Mehl-2020-O}. Now we prove the converse inclusion. 
%By noting that each $S_{r}$ is a subspace thus convex for each $r\in I(x^{\ast})$, we have
%\begin{equation}\label{eqn-3.7}
%    \bigcup\limits_{r\in I(x^{\ast})} N_{S_{r}}(x^{\ast}) = \bigcup\limits_{r\in I(x^{\ast})} \hat{N}_{S_{r}}(x^{\ast}).
%\end{equation}
First  from (13) and (14) in Adam et al. \cite{adam2016normally}, there exists $\delta>0$ such that
\begin{equation}\label{eqn-3.9}
   N_{S}(x^{\ast})=   \bigcup\limits_{x\in \mathbb{B}_{\delta}(x^{\ast})} \hat{N}_{S}(x).
\end{equation}
%we need to show that
%\begin{equation}\label{eqn-3.8}
%    \bigcup\limits_{r\in I(x^{\ast})} {N}_{S_{r}}(x^{\ast}) \subseteq \bigcup\limits_{x\in \mathbb{B}_{\delta}(x^{\ast})} \hat{N}_{S}(x).
%\end{equation}
For each $r\in I(x^{\ast})$, since there are only finitely many subspaces we can pick a unit vector $u_{r}$ such that $u_{r}\in S_{r}$ but $u_{r}\notin S_{j}$ for other $j\neq r$. Consider the point $x=x^{\ast}+tu_{r}$ such that $t<\delta$. Then $x\in \mathbb{B}_{\delta}(x^{\ast})$ and  $x\in S_{r}$ but $x\notin S_{j}$ for other $j\neq r$. In this way, for every $i\in I(x^{\ast})$ we have
\begin{equation}
   {N}_{S_{r}}(x^{\ast}) = S_{r}^{\perp} = {N}_{S_{r}}(x) = \hat{N}_{S}(x)\subseteq \bigcup\limits_{x\in \mathbb{B}_{\delta}(x^{\ast})} \hat{N}_{S}(x),\label{eqn-3.8}
\end{equation}
where the third equality follows from the first equation in (\ref{eqn-3.5}) taking into account that $I(x)=\{r\}$. 
%\textcolor{red}{$ \bigcup\limits_{x\in \mathbb{R}^n} \hat{N}_{S}(x) = N_{S}(x^{\ast})?$}
Combing  (\ref{eqn-3.9}) and (\ref{eqn-3.8}), we obtain the inclusion $\bigcup\limits_{r\in I(x^{\ast})} N_{S_{r}}(x^{\ast}) \subseteq N_{S}(x^{\ast})$ as desired, which completes the proof.
\end{proof}

In the following proposition, we will compute the regular normal cone to tangent cone $T_{S}(x^{\ast})$ at a tangent direction $d$.
\begin{proposition}\label{thm-3.5}
    \emph{(The regular normal cone to the tangent cone)} Let $x^{\ast}\in S$ and $d\in T_{S}(x^{\ast})$. Then the regular normal cone to $T_{S}(x^{\ast})$ at $d$ is calculated as
    \begin{equation}
        \hat{N}_{T_{S}(x^{\ast})}(d) = \bigcap\limits_{r\in I(x^{\ast})\cap I(d)} S_{r}^{\perp}.
    \end{equation}
\end{proposition}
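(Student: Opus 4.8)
The plan is to observe that the tangent cone $T_{S}(x^{\ast})$ is itself a disjunctive union of subspaces, so that the formula for the regular normal cone to such a set — already established as the second equality in (\ref{eqn-3.5}) of Proposition \ref{thm-3.1} — can be applied verbatim, with $S$ replaced by $T_{S}(x^{\ast})$ and $x^{\ast}$ replaced by $d$. Indeed, by (\ref{eqn-3.4}) we have $T_{S}(x^{\ast}) = \bigcup_{r\in I(x^{\ast})} S_{r}$, which is a finite union of subspaces of $\mathbb{R}^n$, hence in particular closed, so $\hat{N}_{T_{S}(x^{\ast})}(d)$ is well defined for the given $d\in T_{S}(x^{\ast})$.

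The key step is to identify the correct active index set. Writing $C := T_{S}(x^{\ast})$, the subspaces appearing as components of $C$ are exactly $\{S_{r}\:\vert\:r\in I(x^{\ast})\}$, and the active index set of $C$ at $d$ in the sense of Proposition \ref{thm-3.1} — that is, the set of indices $r$ for which $S_{r}$ is a component of $C$ and $d\in S_{r}$ — is precisely $\{r\in I(x^{\ast})\:\vert\:d\in S_{r}\} = I(x^{\ast})\cap I(d)$. Applying the second equality in (\ref{eqn-3.5}) to $C$ at $d$ then yields $\hat{N}_{C}(d) = \bigcap_{r\in I(x^{\ast})\cap I(d)} S_{r}^{\perp}$, which is the claimed identity.

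The only thing requiring care is that the derivation of (\ref{eqn-3.5}) uses nothing about $S$ beyond its being a finite union of subspaces; this is indeed the case, since (\ref{eqn-3.5}) is obtained from the tangent-cone formula (\ref{eqn-3.4}) purely by polarization, which is insensitive to the particular collection of subspaces involved. I do not expect a genuine obstacle here: the single delicate point is the bookkeeping of the active index set, namely confirming that indices $r\notin I(x^{\ast})$ play no role even when $d$ happens to lie in such an $S_{r}$ — and they do not, simply because such an $S_{r}$ is not among the components of $T_{S}(x^{\ast})$.
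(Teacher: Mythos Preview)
Your proposal is correct and follows essentially the same approach as the paper: the paper also writes $T_{S}(x^{\ast}) = \bigcup_{r\in I(x^{\ast})} S_{r}$ (denoted there $S(x^{\ast})$) via (\ref{eqn-3.4}), then applies (\ref{eqn-3.5}) to this union at the point $d$, identifying the active index set as $I(x^{\ast})\cap I(d)$. Your additional remarks about bookkeeping and the role of indices $r\notin I(x^{\ast})$ are accurate but more detailed than the paper's terse two-line argument.
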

\begin{proof}
We denote $S(x^{\ast}) := \bigcup\limits_{r\in I(x^{\ast})}S_{r}$. Combing (\ref{eqn-3.4}) with (\ref{eqn-3.5}), we have
\begin{equation*}
    \hat{N}_{T_{S}(x^{\ast})}(d) = \hat{N}_{S(x^{\ast})}(d) = \bigcap\limits_{r\in I(x^{\ast})\cap I(d)} {N}_{S_{r}}(d) = \bigcap\limits_{r\in I(x^{\ast})\cap I(d)} S_{r}^{\perp},
\end{equation*}
which completes the proof.
\end{proof}

\par Recently a directional version of the limiting normal cone has been introduced by Ginchev and Mordukhovich \cite{Ginchev-2010}.

\begin{definition}\label{def-2.6}
    \emph{(Directional normal cones) \cite[Definition 2.3]{Ginchev-2010}} Given a closed set $\Omega\subseteq \mathbb{R}^n$, $x^{\ast}\in \Omega$ and $d\in \mathbb{R}^n$. The limiting normal cone to $\Omega$ at $x^{\ast}$ in direction $d$ is defined by
    \begin{equation*}
        N_{\Omega}(x^{\ast};d) := \left \{ v\in \mathbb{R}^{n} \: \vert \: \exists t_{k}\downarrow 0, d^{k}\to d, v^{k}\to v, \:\emph{s.t.}\: v^{k}\in \hat{N}_{\Omega}(x^{k}+t_{k}d^{k}) \right \}.
    \end{equation*}
\end{definition}
%\begin{remark}
%We can see that the limiting normal cone to $\Omega$ at $x^{\ast}$ in direction $d=0$ is equal to the limiting normal cone. Moreover, we have 
\noindent 
From the definition we have $N_{\Omega}(x^{\ast};0)=N_{\Omega}(x^{\ast})$, $N_{\Omega}(x^{\ast};d)\subseteq N_{\Omega}(x^{\ast})$ and $N_{\Omega}(x^{\ast};d) = \emptyset$ if $d\notin T_{\Omega}(x^{\ast})$.
%\end{remark}

If $C$ is a disjunctive set which is the union of finitely many convex polyhedral sets, by \cite[Lemma 2.1]{Gfre-2014} we have the inclusion
\begin{equation}
    N_{C}(x^{\ast};d)\subseteq \{ v\in N_{C}(x^{\ast}) \:\vert\: v^{T}d=0 \}.\label{disjunctive}
\end{equation}
In the following proposition, we will show that the above inclusion holds as an equality  if $C=S$,  the union of finitely many subspaces.

\begin{proposition}\label{thm-3.4}
    \emph{(Directional normal cone)} Let $x^{\ast}\in S$ and $d\in T_{S}(x^{\ast})$. Then the directional normal cone to $S$ at $x^{\ast}$ in direction $d$ is calculated as
    \begin{equation}
        N_{S}(x^{\ast};d) = \{ v\in N_{S}(x^{\ast}) \:\vert\: v^{T}d=0 \} = \bigcup\limits_{r\in I(x^{\ast})\cap I(d)} S_{r}^{\perp}.
    \end{equation}
\end{proposition}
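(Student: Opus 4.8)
The plan is to prove the two equalities separately, or rather to establish a chain of inclusions that closes up into both. Since $d\in T_S(x^*)$, Proposition \ref{thm-3.1} gives $d\in\bigcup_{r\in I(x^*)}S_r$, so the index set $I(x^*)\cap I(d)$ is nonempty. First I would dispose of the rightmost equality, which is essentially bookkeeping: by Proposition \ref{thm-3.5}, $\hat N_{T_S(x^*)}(d)=\bigcap_{r\in I(x^*)\cap I(d)}S_r^\perp$, and the set $\{v\in N_S(x^*)\mid v^Td=0\}$ unpacks, via the formula $N_S(x^*)=\bigcup_{r\in I(x^*)}S_r^\perp$ from \eqref{eqn-3.6}, as $\bigcup_{r\in I(x^*)}\{v\in S_r^\perp\mid v^Td=0\}$. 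For $r\in I(x^*)$, the condition $v\perp S_r$ together with $v^Td=0$ is exactly $v\perp S_r$ when $d\in S_r$ (i.e.\ $r\in I(d)$), and otherwise cuts $S_r^\perp$ down to a proper subspace; in either case the union over $r\in I(x^*)$ equals the union over $r\in I(x^*)\cap I(d)$ of $S_r^\perp$, because any contribution from an index $r\notin I(d)$ is a subspace of $\mathrm{span}\{d\}^\perp\cap S_r^\perp$, and I would argue this is contained in one of the full $S_j^\perp$ with $j\in I(x^*)\cap I(d)$ — actually the cleanest route is to show directly $\bigcup_{r\in I(x^*)\cap I(d)}S_r^\perp=\{v\in N_S(x^*)\mid v^Td=0\}$ by two inclusions, the ``$\subseteq$'' direction being immediate since $d\in S_r\Rightarrow v\in S_r^\perp\Rightarrow v^Td=0$, and the ``$\supseteq$'' direction following once the middle equality $N_S(x^*;d)=\{v\in N_S(x^*)\mid v^Td=0\}$ is in hand.

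So the real content is the middle equality $N_S(x^*;d)=\{v\in N_S(x^*)\mid v^Td=0\}$. The inclusion $\subseteq$ is free: $S$ is a union of finitely many polyhedral sets (subspaces are polyhedral), so \eqref{disjunctive} applies. For the reverse inclusion I would exploit the subspace structure exactly as in the proof of Proposition \ref{thm-3.1}. Fix $v\in N_S(x^*)$ with $v^Td=0$; by \eqref{eqn-3.6} there is $r_0\in I(x^*)$ with $v\in S_{r_0}^\perp$, and I may assume $r_0\in I(d)$ (if not, replace it — the point is to land in $I(x^*)\cap I(d)$, which, as noted, is where I want to be; this requires the small combinatorial argument that $v^Td=0$ lets us pick such an $r_0$, which I would package as a lemma or inline). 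Now mimic the construction: pick a unit vector $u\in S_{r_0}$ with $u\notin S_j$ for all $j\neq r_0$. Along $x^k:=x^*+t_k u$ with $t_k\downarrow 0$ one has $I(x^k)=\{r_0\}$, hence $\hat N_S(x^k)=S_{r_0}^\perp\ni v$; and since $d\in S_{r_0}=T_{S_{r_0}}(x^k)$, the sequence $x^*+t_k d^k$ with $d^k\equiv d$ stays — wait, I need the base points in the definition of $N_\Omega(x^*;d)$ to be of the form $x^k+t_kd^k$ approaching $x^*$ along direction $d$. So instead set $x^k:=x^*$ and perturb the direction: take $d^k:=d+\varepsilon_k u$ with $\varepsilon_k\downarrow 0$, so $d^k\to d$, and $x^*+t_kd^k=x^*+t_k d+t_k\varepsilon_k u$. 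The issue is whether this point lies only in $S_{r_0}$: since $d\in S_{r_0}$ and $u\in S_{r_0}$, the whole point lies in $S_{r_0}$, but it may also lie in other subspaces containing $d$. That is fine — I only need $\hat N_S(x^*+t_kd^k)\supseteq$ something containing $v$; in fact $\hat N_S(y)=\bigcap_{j\in I(y)}S_j^\perp$, and I want this to contain $v$, i.e.\ I want $v\in S_j^\perp$ for every $j$ active at that perturbed point. The cleanest fix is to choose $u$ generic enough that $x^*+t_kd^k$ is active \emph{only} for those $j\in I(d)$ with $v\in S_j^\perp$; this is possible by a finite-intersection/genericity argument since there are finitely many subspaces.

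The main obstacle, then, is engineering the approximating sequence in Definition \ref{def-2.6} so that the regular normal cones along it all contain the prescribed $v$: one must simultaneously stay near $x^*$, converge in direction to $d$, and keep the active index set small enough that $\bigcap_{j\,\text{active}}S_j^\perp$ still contains $v$. I expect this to be handled by the same ``pick a unit vector avoiding the other subspaces'' trick used for \eqref{eqn-3.8}, now applied to the subspace $S_{r_0}$ and refined using the constraint $v^Td=0$ to guarantee $r_0$ can be taken in $I(d)$; the finiteness of $\{S_1,\dots,S_R\}$ is what makes all the genericity choices available. Once the reverse inclusion $\{v\in N_S(x^*)\mid v^Td=0\}\subseteq N_S(x^*;d)$ is established and matched against the explicit union $\bigcup_{r\in I(x^*)\cap I(d)}S_r^\perp$, all three sets coincide and the proof is complete.
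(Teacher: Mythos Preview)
Your construction of the approximating sequence --- perturb $d$ to $d^k=d+\varepsilon_k u$ with $u$ a unit vector in $S_{r_0}\setminus\bigcup_{j\ne r_0}S_j$ --- is exactly what the paper does (with $\varepsilon_k=1/k$). Once $r_0\in I(x^*)\cap I(d)$ is fixed, the paper simply notes that $d^k\in S_{r_0}$ while $d^k\notin S_j$ for every $j\ne r_0$ and all large $k$ (if $d^{k_1},d^{k_2}\in S_j$ with $k_1\ne k_2$ then subtracting forces $u\in S_j$), whence $I(x^*+t_kd^k)=\{r_0\}$ and $\hat N_S(x^*+t_kd^k)=S_{r_0}^\perp\ni v$. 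So your genericity worries dissolve cleanly; that part of the plan is fine and matches the paper.

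The genuine gap is the step you call a ``small combinatorial argument'': given $v\in N_S(x^*)$ with $v^Td=0$, replace the index $r_0\in I(x^*)$ with $v\in S_{r_0}^\perp$ by one lying in $I(x^*)\cap I(d)$. This is impossible in general. Take $S_1=\mathrm{span}\{e_1,e_2\}$, $S_2=\mathrm{span}\{e_3,e_4\}$ in $\mathbb R^4$, $x^*=0$, $d=e_3$: then $I(x^*)\cap I(d)=\{2\}$, and $v:=e_4$ satisfies $v\in S_1^\perp\subseteq N_S(0)$ and $v^Td=0$, yet $v\notin S_2^\perp=\mathrm{span}\{e_1,e_2\}$. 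This same example shows that the middle equality in the proposition actually \emph{fails} for general unions of subspaces; only the inclusion $\bigcup_{r\in I(x^*)\cap I(d)}S_r^\perp\subseteq\{v\in N_S(x^*)\mid v^Td=0\}$ is valid. The paper avoids your trap by launching the reverse-inclusion argument directly from $\bigcup_{r\in I(x^*)\cap I(d)}S_r^\perp$, so that $r'\in I(x^*)\cap I(d)$ is \emph{given} rather than to be arranged; this correctly yields the outer equality $N_S(x^*;d)=\bigcup_{r\in I(x^*)\cap I(d)}S_r^\perp$. (The paper's own reduction of the middle set to the union via (\ref{eqn-3.4.3}) carries the same defect --- that identity requires $d\in T_{S_r}(x^*)$, i.e.\ $r\in I(d)$ --- so the first displayed equality in the statement should be read as the inclusion $\supseteq$ only.)
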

\begin{proof}

From (\ref{eqn-3.6}), we have
\begin{equation}\label{eqn-3.4.1}
    \{ v\in N_{S}(x^{\ast}) \:\vert\: v^{T}d=0 \} = \bigcup\limits_{r\in I(x^{\ast})} \{ v\in N_{S_{r}}(x^{\ast}) \:\vert\: v^{T}d=0 \}.
\end{equation}
%Since each $S_{r}$ is a subspace and hence convex, we have
%\begin{equation}\label{eqn-3.4.2}
%    \bigcup\limits_{r\in I(x^{\ast})} \{ v\in N_{S_{r}}(x^{\ast}) \:\vert\: v^{T}d=0 \} = \bigcup\limits_{r\in I(x^{\ast})} \{ v\in \hat{N}_{S_{r}}(x^{\ast}) \:\vert\: v^{T}d=0 \}.
%\end{equation}
By \cite[Lemma 2.1]{Gfre-2014}, since $S_r$ is convex,
\begin{equation}\label{eqn-3.4.3}
    \left  \{ v\in {N}_{S_{r}}(x^{\ast}) \:\vert\: v^{T}d=0 \right \} =  {N}_{T_{S_{r}}(x^{\ast})}(d)\qquad \forall {r\in I(x^{\ast})}.
\end{equation}
Further, since $T_{S_{r}}(x^{\ast}) = S_{r}$ for each $r\in I(x^{\ast})$, we have
\begin{equation}\label{eqn-3.4.4}
    \bigcup\limits_{r\in I(x^{\ast})} {N}_{T_{S_{r}}(x^{\ast})}(d) = \bigcup\limits_{r\in I(x^{\ast})} {N}_{S_{r}}(d) = \bigcup\limits_{r\in I(x^{\ast})\cap I(d)} S_{r}^{\perp}.
\end{equation}
Combing (\ref{disjunctive}), (\ref{eqn-3.4.1}),
% (\ref{eqn-3.4.2}),
 (\ref{eqn-3.4.3}) and (\ref{eqn-3.4.4}), we establish the inclusion
\begin{equation*}
   N_{S}(x^{\ast};d)\subseteq  \{ v\in N_{S}(x^{\ast}) \:\vert\: v^{T}d=0 \} = \bigcup\limits_{r\in I(x^{\ast})\cap I(d)} S_{r}^{\perp}.
\end{equation*}

Reversely, for any $v\in \bigcup\limits_{r\in I(x^{\ast})\cap I(d)} S_{r}^{\perp}$, there exists $r'\in I(x^{\ast})\cap I(d)$ such that $v\in S_{r'}^{\perp}$. Since there are only finitely many subspaces, we can find a unit vector $u'$ such that $u'\in S_{r'}$ but $u'\notin S_{j}$ for other $j\neq r'$. We define the sequences $\{d^{k}\}$ as
$
    d^{k}:=d+\frac{1}{k}u'.
$
Then $d^{k}\to d$ as $k\to \infty$. 
Since $r'\in I(x^{\ast})\cap I(d)$ we have $x^{\ast}\in S_{r'}$ and $d\in S_{r'}$, which implies that $d^{k}\in S_{r'}$ but $d^{k}\notin S_{j}$ for other $j\neq r'$. For any sequence $\{t_{k}\}$ such that $t_{k}\to 0^{+}$ as $k\to \infty$, it follows that $x^{\ast}+t_{k}d^{k}\in S_{r'}$ but $d^{k}\notin S_{j}$ for other $j\neq r'$. Therefore, we have
\begin{equation*}
    v\in S_{r'}^{\perp} = {N}_{S_{r'}}(x^{\ast}+t_{k}d^{k}) = \hat{N}_{S}(x^{\ast}+t_{k}d^{k}),
\end{equation*}
where the last equation is from (\ref{eqn-3.5}). From the definition of the directional normal cone in Definition \ref{def-2.6}, we have
$
    v\in N_{S}(x^{\ast};d),
$
which implies
%\begin{equation*}
 $   \bigcup\limits_{r\in I(x^{\ast})\cap I(d)} S_{r}^{\perp} \subseteq N_{S}(x^{\ast};d)
$
as desired.
\end{proof}

%\par Finally, we give the definition for error bounds property of MPDSC and its directional counterpart.

\subsection{The generator sets for disjunctive set of subspaces}
%We first introduce some background on the generator method which plays an important role in this section. Then, we study constraint qualifications for MPDSC which can be derived from Xu and Ye's work \cite{Xu-2021} directly and show that RCPLD implies error bounds for MPDSC. 

% More detailed discussions on it can be found in \cite{Xu-2021}.
% We emphasize here the work \cite{Xu-2021} mainly deals with general MPDC, while we will make some simplifications when considering MPDSC (\ref{MPDSC}).

%\begin{definition}
    Let $A$ be a set with finitely many linearly independent vectors and $D$ be a subspace. We say $A$ is the generator of $D$ if
    %\begin{equation*}
    $    D = \mathcal{G}(A) := \emph{span}(A).$
    %\end{equation*}
%\end{definition}

Now we describe the regular normal cone to the  disjunctive set of subspaces by its generator set. Without loss of generality, we assume that each $S_r$ is represented by 
\begin{equation}\label{eqn-3.3}
    S_{r} := \{ x\in \mathbb{R}^n \:\vert\: \langle a^{r}_{j},x \rangle = 0, j\in \mathcal{E}_{r} \},
\end{equation}
where   $\mathcal{E}_{r}$ is a finite index set and vectors $\{a_{j}^{r}\}_{j\in \mathcal{E}_{r}}$ are linearly independent. For $x^{\ast}\in S_{r}$, let $$A_{S_{r}}:= \{a^{r}_{j}\:\vert\:j\in \mathcal{E}_{r}\}$$ denote the generator  of $S_{r}$. Then
% at $x^{\ast}$, that is,
%\begin{equation*}
  $  N_{S_{r}}(x^{\ast}) = \mathcal{G}(A_{S_{r}}).$
%\end{equation*}
%Since $A_{S_{r}}(x^{\ast}) = \{c^{r}_{j}\:\vert\:j\in \mathcal{E}_{r}\}$ is independent of $x^{\ast}$, we have
%\begin{equation*}
%    A_{S_{r}}(x) = A_{S_{r}}(x^{\ast}),\: \forall x, x^{\ast}\in S_{r}.
%\end{equation*}

By (\ref{eqn-3.5}), the regular normal cone to set $S$ at $x^{\ast}\in S$  is the intersection of finitely many subspaces, hence it is still a subspace. So we may assume that  $\hat{N}_{S}(x^{\ast})$ is generated by a set of linearly independent vectors denoted by $\hat{A}_{S}(x^{\ast})$. That is, we have
\begin{equation}\label{eqn-3.24}
    \hat{N}_{S}(x^{\ast}) = \mathcal{G} (\hat{A}_{S}(x^{\ast})).
\end{equation}
%\textcolor{red}{Do we have $\hat{A}_S(x^*)=\cap_{r\in I(x^*)}A_{S_r}?$}
We call $\hat{A}_{S}(x^{\ast})$ satisfying (\ref{eqn-3.24}) the generator set of the regular normal cone to $S$ at $x^{\ast}$. 

%However, the limiting normal cone $N_{S}(x^{\ast})$ usually cannot be generated by a set of linearly independent vectors since the limiting normal cone $N_{S}(x^{\ast})$ is usually not a subspace from (\ref{eqn-3.6}). 
By (\ref{eqn-3.9}) and (\ref{eqn-3.24}) we have that for some $\delta>0$, 
\begin{equation}\label{eqn-3.25}
    N_{S}(x^{\ast}) = \bigcup\limits_{x\in \mathbb{B}_{\delta}(x^{\ast})}  \hat{N}_{S}(x) = \bigcup\limits_{x\in \mathbb{B}_{\delta}(x^{\ast})} \mathcal{G} (\hat{A}_{S}(x)).
\end{equation}
We also define the set
\begin{equation}\label{eqn-3.26}
    A_{S}(x^{\ast}) := \bigcup\limits_{x\in \mathbb{B}_{\delta}(x^{\ast})} \hat{A}_{S}(x),
\end{equation}
where $\delta>0$ is the constant satisfying condition (\ref{eqn-3.25}). 

By (\ref{eqn-3.6}), we know that
\begin{equation}
    N_{S}(x^{\ast}) = \bigcup\limits_{r\in I(x^{\ast})} N_{S_{r}}(x^{\ast}).
\end{equation}
Therefore an interesting question is, what is the relationship between generator set $A_{S}(x^{\ast})$ and the union of all $A_{S_{r}}$ where $r\in I(x^{\ast})$.   While it is shown that the inclusion $\subseteq$ holds if each $S_r\subseteq \mathbb{R}^d$  with $d=1, 2$ is convex polyhedral by  Lemma 2.1 in \cite{Xu-2021}, in the following lemma, we show that the equality holds if each $S_r$ is a subspace.  

\begin{lemma}\label{lem-3.1}
    Let $S = \bigcup\limits_{r=1}^{R} S_{r} \subseteq \mathbb{R}^{q}$ where $S_{r}$ is a subspace. Then for any $x^{\ast}\in S$, we have
    \begin{equation}\label{Lemmaeqn}
        A_{S}(x^{\ast}) = \bigcup\limits_{r\in I(x^{\ast})} A_{S_{r}}.
    \end{equation}
\end{lemma}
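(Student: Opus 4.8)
The plan is to prove the two inclusions in \eqref{Lemmaeqn} separately, relying on the structural formulas for $\hat N_S$ and $N_S$ already established. First I would unwind the definitions: by \eqref{eqn-3.26}, $A_S(x^*) = \bigcup_{x\in\mathbb{B}_\delta(x^*)}\hat A_S(x)$, where $\hat A_S(x)$ is a chosen set of linearly independent generators of the subspace $\hat N_S(x)$, and by \eqref{eqn-3.5} we have $\hat N_S(x) = \bigcap_{r\in I(x)}S_r^\perp$. The key observation is that for $x$ close to $x^*$ one has $I(x)\subseteq I(x^*)$ (activity can only be lost, not gained, under small perturbations, since there are finitely many subspaces and $x^*\notin S_j$ for $j\notin I(x^*)$ forces $x\notin S_j$ nearby). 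Moreover, exactly as in the proof of Proposition \ref{thm-3.1}, for each $r\in I(x^*)$ one can choose a unit vector $u_r\in S_r\setminus\bigcup_{j\neq r}S_j$ and the point $x = x^* + t u_r$ for small $t>0$ satisfies $I(x) = \{r\}$, hence $\hat N_S(x) = S_r^\perp = N_{S_r}(x^*)$.

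For the inclusion $\bigcup_{r\in I(x^*)}A_{S_r}\subseteq A_S(x^*)$: fix $r\in I(x^*)$ and the point $x = x^*+tu_r\in\mathbb{B}_\delta(x^*)$ as above, so that $\hat N_S(x) = S_r^\perp = \mathcal{G}(A_{S_r})$. Now $\hat A_S(x)$ and $A_{S_r}$ are both generator sets of linearly independent vectors for the same subspace $S_r^\perp$; here I need a small bookkeeping point — the definition of $\hat A_S(x)$ only fixes \emph{some} basis, so to get literal set equality I would either (a) note that the lemma should be read up to the freedom in choosing these bases, and make the canonical choice $\hat A_S(x) := A_{S_r}$ whenever $I(x) = \{r\}$, consistent with the choice $\hat A_S(x^*)$ already made; or (b) argue that any two such generator sets give the same $\mathcal{G}$-span, which is all that is used downstream. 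Taking the natural choice, $A_{S_r} = \hat A_S(x)\subseteq A_S(x^*)$, and taking the union over $r\in I(x^*)$ gives the first inclusion.

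For the reverse inclusion $A_S(x^*)\subseteq\bigcup_{r\in I(x^*)}A_{S_r}$: take any $x\in\mathbb{B}_\delta(x^*)$. Then $\hat N_S(x) = \bigcap_{r\in I(x)}S_r^\perp$ with $I(x)\subseteq I(x^*)$. The subspace $\bigcap_{r\in I(x)}S_r^\perp = \big(\sum_{r\in I(x)}S_r\big)^\perp$ is spanned by a subset of $\bigcup_{r\in I(x)}A_{S_r}$ — more precisely, $\bigcup_{r\in I(x)}A_{S_r}$ spans $\big(\bigcap_{r\in I(x)}S_r^\perp\big)^\perp$... wait, rather: $\bigcap_{r\in I(x)} S_r^\perp$ has as a spanning set any maximal linearly independent subcollection drawn from $\bigcup_{r\in I(x)} A_{S_r}$ that lies inside the intersection, so by choosing $\hat A_S(x)$ to be such a subcollection we get $\hat A_S(x)\subseteq\bigcup_{r\in I(x)}A_{S_r}\subseteq\bigcup_{r\in I(x^*)}A_{S_r}$. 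Taking the union over $x\in\mathbb{B}_\delta(x^*)$ yields the second inclusion, completing the proof.

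The main obstacle is conceptual rather than computational: the set $\hat A_S(x)$ is only defined up to a choice of basis, so "$A_S(x^*) = \bigcup_{r\in I(x^*)}A_{S_r}$" as a literal equality of sets requires committing to compatible choices of these generator sets across all $x\in\mathbb{B}_\delta(x^*)$. The clean fix is to make the canonical choice described above — for $x$ with $I(x) = \{r\}$ take $\hat A_S(x) = A_{S_r}$, and in general take $\hat A_S(x)$ to be a linearly independent subset of $\bigcup_{r\in I(x)}A_{S_r}$ spanning $\bigcap_{r\in I(x)}S_r^\perp$ — and to check this is consistent with how $\hat A_S(x^*)$ and the $A_{S_r}$ were fixed in \eqref{eqn-3.3}–\eqref{eqn-3.24}. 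Once the choice is pinned down, both inclusions are immediate from the activity-set relation $I(x)\subseteq I(x^*)$ and the perturbation argument isolating single active indices.
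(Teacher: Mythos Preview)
Your approach is essentially identical to the paper's: both directions rest on (i) the activity relation $I(x)\subseteq I(x^\ast)$ for $x$ near $x^\ast$, giving $\hat A_S(x)\subseteq\bigcup_{r\in I(x)}A_{S_r}\subseteq\bigcup_{r\in I(x^\ast)}A_{S_r}$, and (ii) the perturbation $x_r=x^\ast+tu_r$ with $u_r\in S_r\setminus\bigcup_{j\neq r}S_j$ forcing $I(x_r)=\{r\}$ and hence $\hat A_S(x_r)=A_{S_r}$. Your explicit treatment of the basis-choice ambiguity is in fact more careful than the paper, which simply asserts the inclusion \eqref{eqn-3.30} as a consequence of \eqref{eqn-3.29} without commenting on the need for a compatible selection of $\hat A_S(x)$.
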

\begin{proof} 
By  the definition,
$
    A_{S}(x^{\ast}) = \bigcup\limits_{x\in \mathbb{B}_{\delta}(x^{\ast})} \hat{A}_{S}(x).
    $
%We will show that
%\begin{equation*}
%    A_{S}(x^{\ast}) = \bigcup\limits_{x\in \mathbb{B}_{\delta}(x^{\ast})} \hat{A}_{S}(x) = \bigcup\limits_{r\in I(x^{\ast})} A_{S_{r}}.
%\end{equation*}
%The first equality comes from the definition of $A_{S}(x^{\ast})$. 
By (\ref{eqn-3.5}), we have
\begin{equation}\label{eqn-3.29}
    \hat{N}_{S}(x) = \bigcap\limits_{r\in I(x)} {N}_{S_{r}}(x) \qquad \forall x\in S,
\end{equation}
 which implies the inclusion
\begin{equation}\label{eqn-3.30}
    \hat{A}_{S}(x) \subseteq \bigcup\limits_{r\in I(x)} A_{S_{r}}\qquad \forall x\in S.
\end{equation}
%By Lemma 2.1 in \cite{Xu-2021}, $A_{S}(x^{\ast}) \subseteq \bigcup\limits_{r\in I(x^{\ast})} A_{S_{r}}.$ 
For any $v\in A_S(x^*)=\bigcup\limits_{x\in \mathbb{B}_{\delta}(x^{\ast})} \hat{A}_{S}(x)$, there exists $x'\in \mathbb{B}_{\delta}(x^{\ast})$ such that
\begin{equation}\label{eqn-3.31}
    v\in \hat{A}_{S}(x')\subseteq \bigcup\limits_{r\in I(x')} A_{S_{r}}\subseteq \bigcup\limits_{r\in I(x^{\ast})} A_{S_{r}}.
\end{equation}
Indeed, the first inclusion in (\ref{eqn-3.31}) is from (\ref{eqn-3.30}) directly. It is not difficult to see that we can take $\delta>0$ sufficiently small to guarantee
%\begin{equation}\label{eqn-3.32}
  $  I(x')\subseteq I(x^{\ast}).$
%\end{equation} 
%Further, by noting that $A_{S_{r}}(x^{\ast})$ is independent of $x^{\ast}$, hence we have
%\begin{equation}\label{eqn-3.33}
%    A_{S_{r}}(x^{\ast}) = A_{S_{r}}(x').
%\end{equation} 
%By combining (\ref{eqn-3.32}) with (\ref{eqn-3.33}), 
Hence we justify the second inclusion in (\ref{eqn-3.31}). Therefore,
\begin{equation}\label{eqn-3.34}
   A_{S}(x^{\ast}) =  \bigcup\limits_{x\in \mathbb{B}_{\delta}(x^{\ast})} \hat{A}_{S}(x) \subseteq \bigcup\limits_{r\in I(x^{\ast})} A_{S_{r}}.
\end{equation}
Now  we prove the reverse inclusion. Take any $v\in \bigcup\limits_{r\in I(x^{\ast})} A_{S_{r}}$. Then there exists $r\in I(x^{\ast})$ such that
$
    v\in A_{S_{r}}.
$
Since the set $S$ is the union of finitely many subspaces, we can pick a unit vector $u_{r}$ such that $u_{r}\in S_{r}$ but $u_{r}\notin S_{j}$ for other $j\neq r$. In this way, for any $0<t<\delta$ we know that the point $x_{r} := x^{\ast} + tu_{r}\in \mathbb{B}_{\delta}(x^{\ast})$ satisfies $x_{r}\in S_{r}$ but $x_{r}\notin S_{j}$ for other $j\neq r$. Therefore, it follows from (\ref{eqn-3.5}) that
$
    \hat{N}_{S}(x_{r}) = {N}_{S_{r}}(x_{r}),
$
which implies that
%\begin{equation}\label{eqn-3.35}
   $ \hat{A}_{S}(x_{r}) = A_{S_{r}}$ and
%\end{equation}
%From (\ref{eqn-3.33}) and (\ref{eqn-3.35}), we can see
hence we have
\begin{equation*}
    v\in A_{S_{r}}= \hat{A}_{S}(x_{r}) \subseteq \bigcup\limits_{x\in \mathbb{B}_{\delta}(x^{\ast})} \hat{A}_{S}(x).
\end{equation*}
Then, we have
\begin{equation}\label{eqn-3.36}
    \bigcup\limits_{r\in I(x^{\ast})} A_{S_{r}} \subseteq \bigcup\limits_{x\in \mathbb{B}_{\delta}(x^{\ast})} \hat{A}_{S}(x)=A_S(x^*),
\end{equation}
and hence the proof is complete.
%By combing (\ref{eqn-3.34}) with (\ref{eqn-3.36}), we complete the proof.
\end{proof}

% Section 3: Optimality conditions for MPDSC
\section{Optimality conditions and constraint qualifications for MPDSC}\label{section-3}
In this section we study optimality conditions for MPDSC 
 of the form:
\begin{equation}\label{MPDSC}
    \begin{aligned}
         \min_{x}\quad & f(x)\\
         \text{s.t.}\quad & g(x)\leq 0, h(x) = 0, \Phi_{i}(x)\in S,\: i = 1,\dots,l,
    \end{aligned}
\end{equation}
where $f:\mathbb{R}^n \to \mathbb{R}$, $g:\mathbb{R}^n \to \mathbb{R}^m$, $h:\mathbb{R}^n \to \mathbb{R}^p$, $\Phi_{i}:\mathbb{R}^n \to \mathbb{R}^q$, and constraint set $S := \bigcup_{r=1}^{R} S_{r} \subseteq \mathbb{R}^q$ with $S_{r}$ being a subspace, $r=1,\dots,R$. Unless otherwise mentioned, we assume that all functions are smooth.  We denote the feasible region of problem (\ref{MPDSC}) by $\mathcal{F}$ and the active set for inequality constraints at $x^{\ast}$ by $\mathcal{I}_{g}(x^{\ast}):=\{i\in \{1,\dots,m\} \:\vert\: g_{i}(x^{\ast}) = 0\}$. Problem (\ref{MPDSC}) is more general than what we will need to study CCOP in the form (\ref{pro-4.1}). We could also study CCOP in the more general form as in (\ref{MPDSC}). But for simplicity in comparison with other results for CCOP, we do not state  CCOP in the more general form.
\subsection{Optimality conditions for MPDSC}
According to  the popular terminology in the mathematical program with equilibrium constraints (MPEC), one normally associate the strong (S-) stationarity with the regular normal cone and the Mordukhovich (M-) stationarity with the limiting normal cone (see e.g. \cite[Definition 1]{Fleg-2007}).
Hence if $x^{\ast}$ is a feasible point of MPDSC
we  say that  $x^{\ast}$ is S-/M-stationary   if there exists $\lambda = (\lambda^{g},\lambda^{h},\lambda^{\Phi})$ with $ \lambda^{g}_{i}\geq 0,\:\forall i\in \mathcal{I}_{g}(x^*)$ satisfying
\begin{equation}\label{eqn-18new}
   % \begin{aligned}
        \nabla f(x^{\ast}) + \sum\limits_{i\in \mathcal{I}_{g}(x^*)}\lambda^{g}_{i} \nabla g_{i}(x^{\ast}) + \sum\limits_{i=1}^{p}\lambda^{h}_{i}\nabla h_{i}(x^{\ast}) + \sum\limits_{i=1}^{l} \nabla \Phi_i(x^*)^T\lambda^\Phi_{i} = 0,
  %      \\  \lambda^{g}_{i}\geq 0,\:&\forall i\in \mathcal{I}_{g}(x^*),
   % \end{aligned}
\end{equation}
such that 
\begin{eqnarray}
    \lambda^{\Phi}_{i} & \in &  \hat{N}_{S}(\Phi_{i}(x^{\ast}))\equiv\bigcap_{r\in I(\Phi_{i}(x^{\ast}))}S_{r}^{\perp},\:\forall i=1,\dots,l;\\
     \lambda^{\Phi}_{i} &\in & N_{S}(\Phi_{i}(x^{\ast}))\equiv\bigcup_{r\in I(\Phi_{i}(x^{\ast}))}S_{r}^{\perp},\:\forall i=1,\dots,l,
\end{eqnarray} respectively.

Recently the directional versions of S- and M-stationarity for MPDC have been introduced and studied by Gfrerer \cite{Gfre-2014}. We denote   the linearization cone  of MPDSC (\ref{MPDSC}) at $x^*$ as
\begin{equation*}
    L_{\mathcal{F}}^{lin}(x^{\ast})=\left\{d \in \mathbb{R}^{n} \middle\vert \begin{array}{cl}
    \nabla g_{i}(x^{\ast})d \leq 0, & i \in \mathcal{I}_{g}(x^{\ast}) \\
    \nabla h_{i}({x}^\ast)d = 0, & i \in\{1, \ldots, p\} \\
    \nabla \Phi_{i}(x^{\ast})d\in \bigcup\limits_{r\in I(\Phi_{i}(x^{\ast}))} S_{r}, & i \in\{1, \ldots, l\}
    \end{array}\right\},
\end{equation*} and the critical cone at $x^*$ as $$\mathcal{C}(x^{\ast}) :=\left  \{d\in L^{lin}_{\mathcal{F}}(x^{\ast})\:\vert\:\nabla f(x^{\ast})d\leq 0 \right \}.$$ We also denote the active set for inequality constraints at $x^{\ast}$ in direction $d$ by 
\begin{equation}\mathcal{I}_{g}^*(d):=\{i\in \mathcal{I}_{g}(x^{\ast}) \:\vert\: \nabla g_{i}(x^{\ast})d=0\}.\label{activeset}
\end{equation}
According to \cite[Propositions 3.8 and 3.6]{liang2021ye}, we may define the directional S-/M-stationarity as follows.
\begin{definition}\label{def-3.9}
Let $x^{\ast}$ be a feasible point of MPDSC \emph{(\ref{MPDSC})} and $d\in \mathcal{C}(x^{\ast})$. 
%\begin{itemize}\item[\emph{(i)}] 
We say $x^{\ast}$ is S-/M-stationary $(d)$ if there exists $\lambda = (\lambda^{g},\lambda^{h},\lambda^{\Phi})$ with $\lambda^{g}_{i}\geq 0,\: \forall i\in \mathcal{I}_{g}^*(d)$  satisfying 
\begin{equation}\label{eqn-18}
   % \begin{aligned}
        \nabla f(x^{\ast}) + \sum\limits_{i\in \mathcal{I}_{g}^*(d)}\lambda^{g}_{i} \nabla g_{i}(x^{\ast}) + \sum\limits_{i=1}^{p}\lambda^{h}_{i}\nabla h_{i}(x^{\ast}) + \sum\limits_{i=1}^{l}\nabla \Phi_i(x^*)^T\lambda^\Phi_{i} = 0,
        %\\
      %  \lambda^{g}_{i}\geq 0,\:&\forall i\in \mathcal{I}_{g}^*(d),
   % \end{aligned}
\end{equation}
such that
\begin{eqnarray*}
    \lambda^{\Phi}_{i} & \in &  \hat{N}_{T_{S}(\Phi_{i}(x^{\ast}))}(\nabla \Phi_{i}(x^{\ast})d)\equiv\bigcap_{r\in I(\Phi_{i}(x^{\ast}))\cap I(\nabla \Phi_{i}(x^{\ast})d)}S_{r}^{\perp},\:\forall i=1,\dots,l;\\
     \lambda^{\Phi}_{i} &\in & N_{S}(\Phi_{i}(x^{\ast});\nabla \Phi_{i}(x^{\ast})d)\equiv\bigcup_{r\in I(\Phi_{i}(x^{\ast}))\cap I(\nabla \Phi_{i}(x^{\ast})d)}S_{r}^{\perp},\:\forall i=1,\dots,l,
\end{eqnarray*}
respectively. We call the above multiplier $\lambda$ the S-/M- multiplier in direction d respectively.
%\item[\emph{(ii)}] We say $x^{\ast}$ is M-stationary $(d)$ if there exists $\lambda = (\lambda^{g},\lambda^{h},\lambda^{\Phi})^{T}$ satisfying \emph{(\ref{eqn-18})} such that
%\begin{equation*}
%    \lambda^{\Phi}_{i}\in \bigcup_{r\in I(\Phi_{i}(x^{\ast}))\cap I(\nabla \Phi_{i}(x^{\ast})d)}S_{r}^{\perp},\:\forall i=1,\dots,l.
%\end{equation*}
%\end{itemize}
\end{definition}
In fact, if we take direction $d=0$ in Definition \ref{def-3.9}, then since $\mathcal{I}_{g}^*(0)=\mathcal{I}_{g}(x^*)$,$I(0)=\{1,\dots, R\}$,  we will recover S-/M-stationarity  for MPDSC (\ref{MPDSC}).

We now discuss under what conditions a local minimizer are S-stationary $(d)$ or M-stationary $(d)$, respectively. When $d\not =0$, it is easy to see that the M- stationarity (d) are stronger than the standard M- stationarity while S- stationarity  are stronger than the standard S- stationarity (d).

First we recall the following well-known condition.
\begin{definition}[Metric subregularity constraint qualification/error bound property for MPDSC](\cite[Definition 2.1]{Gfre-2013})
   % \emph{((Directional) error bounds for MPDSC)}
     Given $x^{\ast}\in \mathcal{F}$ and $d\in \mathbb{R}^n$, we say that the metric subregularity constraint qualification (MSCQ) holds at $x^{\ast}$ in direction $d$ for MPDSC  if there exists $\alpha\geq 0$ and $\rho>0$ such that
    \begin{equation*}
        d_{\mathcal{F}}(x)\leq \alpha \Bigl( \sum\limits_{i=1}^{m} \max\{g_{i}(x),0\} + \sum\limits_{i=1}^{p} |h_{i}(x)| + \sum\limits_{i=1}^{l} d_{S}(\Phi_{i}(x)) \Bigr),\: \quad \forall x\in x^{\ast}+V_{\rho,\delta}(d)
        %x\in x^{\ast}+\rho \mathbb{B},
    \end{equation*} 
    where 
    $  V_{\rho,\delta}(d):=\Big\{ z\in \rho \mathbb{B}\:\Big\vert\: \big\|\|d\| {z}-\|z\|d \big\|\leq \delta \|z\| \|d\| \Big\}$
    is the so-called directional neighborhood in direction $d$. If $d=0$ in the above, we say that the metric subregularity constraint qualification holds at $x^*$.
%   we say that error bounds property holds at $x^{\ast}\in \mathcal{F}$ in direction $d$ for MPDSC if there exists $\alpha\geq 0$, $\rho>0$ and $\delta>0$ with a directional neighborhood $V_{\rho,\delta}(d)$ of $d$ such that the above estimate holds for all $x\in x^{\ast}+V_{\rho,\delta}(d)$, where $V_{\rho,\delta}(d)$ denotes the directional neighborhood at $0$ in direction $d$:
%    \begin{equation*}
%        V_{\rho,\delta}(d) := \begin{cases}
%        \{0\} \cup \Big\{ z\in \rho \mathbb{B}\setminus \{0\}\:\Big\vert\: \big\| \frac{z}{\|z\|} - \frac{d}{\|d\|} \big\|\leq \delta \Big\}, & \text{if } d\neq 0,\\
%        \rho \mathbb{B}, & \text{if } d=0.
%        \end{cases}
%    \end{equation*}
\end{definition}

%\begin{remark}
%It is easy to see that error bounds property holds at $x^{\ast}$ \emph{(}in direction $d$\emph{)} for MPDSC if and only if the set-valued mapping $M(x):=F(x)-\Lambda$ is metrically subregular at $(x^{\ast},0)$ \emph{(}in direction $d$\emph{)} \emph{\cite[Definition 2.1]{Benk-2019}}, where $F(x) := (g(x),h(x),\Phi(x))$ and $\Lambda := \mathbb{R}^{m}_{-} \times \{0\}^{p} \times S^{l}$.
%\end{remark}

\begin{remark}
Since the metric subregularity is an abstract condition, in practice, one needs to use some verifiable sufficient conditions to ensure its validity.  There are some verifiable sufficient conditions in the literature such as the (directional) quasi-/pseudo normality  \emph{\cite[Definition 4.1]{bai2019directional}} and  the  first/second-order sufficient condition for metric subregularity  \emph{\cite[Theorem 4.3]{Gfre-2013}}.
%, we have the following implications:
%\begin{equation*}
%    \small{
%    ``\emph{FOSCMS\:in\:$d$} \Longrightarrow \emph{SOSCMS\:in\:$d$} \Longrightarrow \emph{pseudo-normality\:in\:$d$} \Longrightarrow \emph{quasi-normality\:in\:$d$} \Longrightarrow \emph{metric subregularity\:in\:$d$}"
%    }
%\end{equation*}
\end{remark}

We now give the first/second-order sufficient condition for metric subregularity introduced in  \cite[Theorem 4.3]{Gfre-2013} and the directional MPDC-LICQ \cite[Definition 3.6]{Gfre-2014}
%introduced in Mehlitz \cite{Mehl-2020-O}
 to our problem setting.
\begin{definition}\label{Defn3.3} Let $x^*\in {\cal F}$. 
\begin{itemize}
\item[\emph{(a)}]  We say that the MPDSC first-order sufficient condition for metric subregularity (MPDSC-FOSCMS) holds at $x^*$  in direction $d\in L_{\mathcal{F}}^{lin}(x^{\ast})$ if there is  no nonzero vector $ (\lambda^g, \lambda^h,$ $\lambda^{\Phi}) \in \mathbb{R}^{m}\times \mathbb{R}^n\times \prod_{i=1}^{l}\mathbb{R}^q$  with $\lambda^{g}_{i}\geq 0,\: \forall i\in \mathcal{I}_{g}^*(d)$ and   $\lambda^{\Phi}_{i}\in \bigcup\limits_{r\in I(\Phi_{i}(x^{\ast}))\cap I(\nabla \Phi_{i}(x^{\ast})d)}S_{r}^{\perp}$ satisfying 
\begin{equation} \sum\limits_{i\in \mathcal{I}_{g}^*(d)}\lambda^{g}_{i} \nabla g_{i}(x^{\ast}) + \sum\limits_{i=1}^{p}\lambda^{h}_{i}\nabla h_{i}(x^{\ast}) + \sum\limits_{i=1}^{l}\nabla \Phi_i(x^*)^T\lambda^\Phi_{i} = 0.\label{FOSCMS}
\end{equation}
\item[\emph{(b)}] Suppose that all functions are twice continuously differentiable. We say that the MPDSC second-order sufficient condition for metric subregularity (MPDSC-SOSCMS) holds at $x^*$  in direction $d\in L_{\mathcal{F}}^{lin}(x^{\ast})$ if there is  no nonzero vector $ (\lambda^g, \lambda^h,$ $\lambda^{\Phi}) \in \mathbb{R}^{m}\times \mathbb{R}^n\times \prod_{i=1}^{l}\mathbb{R}^q$  with $\lambda^{g}_{i}\geq 0,\: \forall i\in \mathcal{I}_{g}^*(d)$ and   $\lambda^{\Phi}_{i}\in \bigcup\limits_{r\in I(\Phi_{i}(x^{\ast}))\cap I(\nabla \Phi_{i}(x^{\ast})d)}S_{r}^{\perp}$ satisfying (\ref{FOSCMS}) and the following second-order condition
$$ d^{T} \nabla^{2}_{x} \mathcal{L}^0(x^{\ast},\lambda^{g},\lambda^{h},\lambda^{\Phi}) d\geq 0,$$
where 
$$  \mathcal{L}^0(x,\lambda^{g},\lambda^{h},\lambda^{\Phi}) :=  \langle \lambda^{g},g(x) \rangle + \langle \lambda^{h},h(x) \rangle + \langle \lambda^{\Phi},\Phi(x) \rangle.$$
\item[\emph{(c)}] We say that MPDSC linear independence constraint qualification (MPDSC-LICQ) holds at $x^*$  in direction $d\in L_{\mathcal{F}}^{lin}(x^{\ast})$  if there is no nonzero vector $ (\lambda^g, \lambda^h, \lambda^{\Phi})\in \mathbb{R}^{m}\times \mathbb{R}^n\times \prod_{i=1}^{l}\mathbb{R}^q$ with $ \lambda^{\Phi}_{i}\in \sum\limits_{r\in I(\Phi_{i}(x^{\ast}))\cap I(\nabla \Phi_{i}(x^{\ast})d)} S_{r}^{\perp}$ such that
   (\ref{FOSCMS}) holds.
\end{itemize} 
\end{definition}
When $d=0$, MPDSC-FOSCMS (d) reduces to MPDSC-NNAMCQ, MPDSC-LICQ (d) reduces to MPDSC-LICQ defined in Definition \ref{def-3.12} repsectively. It is obvious that MPDSC-FOSCMS (d) and MPDSC-LICQ (d) with $d\not =0$ are weaker than NNAMCQ and MPDSC-LICQ, respectively.

We denote the Lagrangian function of problem (\ref{MPDSC}) by
\begin{equation*}
    \mathcal{L}(x,\lambda^{g},\lambda^{h},\lambda^{\Phi}) := f(x) + \langle \lambda^{g},g(x) \rangle + \langle \lambda^{h},h(x) \rangle + \langle \lambda^{\Phi},\Phi(x) \rangle.
\end{equation*}
%We say that the quadratic growth condition is fulfilled at $x^{\ast}$ if there are constants $\varepsilon>0$ and $C>0$ such that the following condition holds:
%\begin{equation}\label{eqn-3.23}
%    f(x)\geq f(x^{\ast}) + C\|x-x^{\ast}\|^{2},\:\forall x\in \mathcal{F}\cap \mathbb{B}_{\varepsilon}(x^{\ast}).
%\end{equation}
%In particular, we can see the quadratic growth condition (\ref{eqn-3.23}) is sufficient for $x^{\ast}$ be a strict local minimizer, that is,
%\begin{equation*}
%    f(x) > f(x^{\ast}),\:\forall x\in \mathcal{F}\cap \mathbb{B}_{\varepsilon}(x^{\ast})\:\:\textrm{such that}\:\:x\neq x^{\ast}.
%\end{equation*}

Similarly as  in \cite[Section 4]{liang2021ye}, the following theorem can be obtained directly from the optimality conditions for the disjunctive program originally obtained by Gfrerer in \cite[Theorems 3.3 and 3.17, Corollary 3.20]{Gfre-2014} and summarized in \cite[Theorem 6.1]{Benk-2022} with the calculus rules of Cartisian product rules of tangent and directional normal cones in  \cite[Proposition 3.3]{ye2018verifiable}.

\begin{theorem}\label{thm-3.4.}
    Let $x^{\ast}$ be a local optimal solution of MPDSC \emph{(\ref{MPDSC})} and $d\in \mathcal{C}(x^{\ast})$. 
    %Then the following necessary optimality conditions hold:
    \begin{itemize}
   \item[\emph{(i)}] 
   %For , we assume that $M(x)=F(x)-\Lambda$ is metrically subregular at $(x^{\ast},0)$ in direction $d$, where $F(x) = (g(x), h(x), \Phi(x))$ and $\Lambda = \mathbb{R}^{m}_{-} \times \{0\}^{p} \times S^{l}$. Then 
If MSCQ in direction $d$ holds at $x^*$, then $x^*$ must be M-stationary (d). Moreover if all functions are twice continuously differentiable, then there exists an M-stationary $(d)$ multiplier $ (\lambda^{g}, \lambda^{h}, \lambda^{\Phi})$  such that the second-order necessary optimality condition holds:
    \begin{equation}\label{second-order}
        d^{T} \nabla^{2}_{x} \mathcal{L}(x^{\ast},\lambda^{g},\lambda^{h},\lambda^{\Phi}) d\geq 0.
    \end{equation}

    \item[\emph{(ii)}] 
    %For $d\in \mathcal{C}(x^{\ast})$, assume 
  If MPDSC-LICQ $(d)$ is fulfilled at $x^{\ast}$,
  %, i.e., if there is no nonzero vector $\lambda := (\lambda^g, \lambda^h, \lambda^{\Phi})^{T}\in \mathbb{R}^{m}\times \mathbb{R}^n\times \prod_{i=1}^{l}\mathbb{R}^q$ such that
%    \begin{equation*}
%        \begin{aligned}
%            & 0 = \sum\limits_{i\in \mathcal{I}_{g}^*(d)} \lambda_{i}^{g} \nabla g_{i}(x^{\ast}) + \sum\limits_{i=1}^{p} \lambda_{i}^{h} \nabla h_{i}(x^{\ast}) + \sum\limits_{i=1}^{l} \nabla \Phi_{i}(x^{\ast})^{T}\lambda^{\Phi}_{i},\\
%            & \lambda^{\Phi}_{i}\in \sum\limits_{r\in I(\Phi_{i}(x^{\ast}))\cap I(\nabla \Phi_{i}(x^{\ast})d)} S_{r}^{\perp},\:i = 1,\dots,l,
%        \end{aligned}
%    \end{equation*} 
then $x^*$ must be S-stationary (d). Moreover  if all functions are twice continuously differentiable, then there exists an S-stationary $(d)$ multiplier $(\lambda^{g}, \lambda^{h}, \lambda^{\Phi})$  such that the second-order necessary optimality condition \emph{(\ref{second-order})} holds.
    \end{itemize}
    Conversely, let $x^{\ast}$ be a feasible solution of MPDSC \emph{(\ref{MPDSC})}. Suppose that all functions are twice continuously differentiable and for every nonzero critical direction $0\neq d\in \mathcal{C}(x^{\ast})$ there exists an S-stationary $(d)$ multiplier $ (\lambda^{g}, \lambda^{h}, \lambda^{\Phi})$  such that the second-order condition strictly holds:
    \begin{equation*}
        d^{T} \nabla^{2}_{x} \mathcal{L}(x^{\ast},\lambda^{g},\lambda^{h},\lambda^{\Phi}) d > 0.
    \end{equation*}
    Then
    % the quadratic growth condition holds at $x^{\ast}$, that is,
%\begin{equation*}
%    f(x) > f(x^{\ast})+ C\|x-x^{\ast}\|^{2},\:\forall x\in \mathcal{F}\cap \mathbb{B}_{\varepsilon}(x^{\ast})\:\:\textrm{such that}\:\:x\neq x^{\ast},
%\end{equation*} for some $\varepsilon>0$ and $C>0$. In particular, 
$x^{\ast}$ is a strict local minimizer of MPDSC \emph{(\ref{MPDSC})}.
\end{theorem}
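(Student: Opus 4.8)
The plan is to obtain Theorem~\ref{thm-3.4.} as a corollary of the corresponding theorem for the mathematical program with disjunctive constraints (MPDC), using two ingredients: the explicit normal‑cone formulas derived in Section~\ref{section-2}, and the Cartesian product calculus rules. First I would rewrite MPDSC~(\ref{MPDSC}) as a single disjunctive system by stacking the smooth map $F(x):=(g(x),h(x),\Phi_1(x),\dots,\Phi_l(x))$ and the set $D:=\mathbb{R}_-^m\times\{0\}^p\times S\times\cdots\times S$ ($l$ copies of $S$), so that $\mathcal{F}=\{x\mid F(x)\in D\}$. Since each $S_r$ is a subspace (in particular a convex polyhedral set), $D$ is a union of finitely many convex polyhedral sets, hence MPDC applies. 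The established MPDC theory — Gfrerer \cite[Theorems 3.3 and 3.17, Corollary 3.20]{Gfre-2014}, as restated in \cite[Theorem 6.1]{Benk-2022} — then yields: under MSCQ in direction $d$, a local minimizer is M‑stationary$(d)$ (and, in the twice‑differentiable case, satisfies the second‑order necessary condition~(\ref{second-order}) with an M‑multiplier); under the directional LICQ, S‑stationarity$(d)$ with second‑order necessary condition; and conversely, if every nonzero critical direction admits an S‑multiplier with strict second‑order positivity, then $x^\ast$ is a strict local minimizer.

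Next I would translate the abstract MPDC conclusions into the concrete componentwise statements of Definitions~\ref{def-3.9} and~\ref{Defn3.3}. The directional normal cones of $D$ decompose as a Cartesian product by \cite[Proposition 3.3]{ye2018verifiable}: $N_D\big(F(x^\ast);F'(x^\ast)d\big)=N_{\mathbb{R}_-^m}(\cdot;\cdot)\times N_{\{0\}^p}(\cdot;\cdot)\times\prod_{i=1}^l N_S\big(\Phi_i(x^\ast);\nabla\Phi_i(x^\ast)d\big)$, and similarly for the regular normal cone to the tangent cone. For the $\mathbb{R}_-^m$ block this reproduces the sign condition $\lambda_i^g\geq0$ on the directionally active index set $\mathcal{I}_g^\ast(d)$ (with the inactive multipliers zero), for the $\{0\}^p$ block it gives free $\lambda^h$, and for each $S$ block Propositions~\ref{thm-3.5} and~\ref{thm-3.4} give exactly $\hat N_{T_S(\Phi_i(x^\ast))}(\nabla\Phi_i(x^\ast)d)=\bigcap_{r\in I(\Phi_i(x^\ast))\cap I(\nabla\Phi_i(x^\ast)d)}S_r^\perp$ and $N_S(\Phi_i(x^\ast);\nabla\Phi_i(x^\ast)d)=\bigcup_{r\in I(\Phi_i(x^\ast))\cap I(\nabla\Phi_i(x^\ast)d)}S_r^\perp$. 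Plugging these into the MPDC stationarity system (whose stationarity equation is $\nabla f(x^\ast)+F'(x^\ast)^T\eta=0$ with $\eta\in$ the appropriate normal cone) recovers~(\ref{eqn-18}) together with the membership conditions in Definition~\ref{def-3.9}, and the MPDC constraint qualifications specialize to MPDSC‑LICQ$(d)$ of Definition~\ref{Defn3.3}(c). The Lagrangian $\mathcal{L}$ of~(\ref{MPDSC}) is precisely $f+\langle\eta,F\rangle$, so the second‑order terms match verbatim; since the inactive inequality multipliers vanish, $d^T\nabla^2_x\mathcal{L}\,d$ may equivalently be written with the sum over $\mathcal{I}_g^\ast(d)$.

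Finally I would check the converse direction, which mirrors the MPDC sufficient condition: given a feasible $x^\ast$ at which every nonzero $d\in\mathcal{C}(x^\ast)$ has an S‑multiplier with $d^T\nabla^2_x\mathcal{L}\,d>0$, the MPDC second‑order sufficient optimality theorem gives that $x^\ast$ is a strict local minimizer; one only needs to note that the critical cone of the MPDC reformulation at $x^\ast$ coincides with $\mathcal{C}(x^\ast)$ as defined via $L^{lin}_{\mathcal{F}}(x^\ast)$, which again follows from the tangent‑cone formula~(\ref{eqn-3.4}) applied blockwise. I would present all of this as a short proof: ``This follows from \cite[Theorem 6.1]{Benk-2022} (equivalently \cite[Theorems 3.3, 3.17 and Corollary 3.20]{Gfre-2014}) applied to the disjunctive reformulation $F(x)\in D$, using the Cartesian product rule \cite[Proposition 3.3]{ye2018verifiable} together with the normal cone formulas in Propositions~\ref{thm-3.1}, \ref{thm-3.5} and \ref{thm-3.4}.'' The main obstacle — really the only nontrivial point — is bookkeeping the directionally inactive inequality indices: one must confirm that the MPDC directional stationarity automatically sets $\lambda_i^g=0$ for $i\in\mathcal{I}_g(x^\ast)\setminus\mathcal{I}_g^\ast(d)$ (because $N_{\mathbb{R}_-}(0;\delta)=\{0\}$ when $\delta<0$), so that the sum in~(\ref{eqn-18}) can legitimately be restricted to $\mathcal{I}_g^\ast(d)$ and the second‑order condition is unaffected; everything else is a direct transcription of known MPDC results through the subspace normal‑cone calculus of Section~\ref{section-2}.
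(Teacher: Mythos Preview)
Your proposal is correct and follows essentially the same approach as the paper: the paper does not give a detailed proof but simply states that the theorem follows from \cite[Theorems 3.3, 3.17, Corollary 3.20]{Gfre-2014} (as summarized in \cite[Theorem 6.1]{Benk-2022}) via the Cartesian product calculus rule \cite[Proposition 3.3]{ye2018verifiable}, which is exactly your plan. Your write-up is in fact more detailed than the paper's, including the explicit bookkeeping for the directionally inactive inequality multipliers, but the underlying argument is identical.
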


%There are other stationarity conditions for MPDC in the literature such as $\mathcal{Q}$-stationarity and $\mathcal{Q}_{M}$-stationarity \cite{benko2018new}, linearized M-stationarity (lin. M-stationarity) \cite{Gfre-2019} and asymptotic M-stationarity (AM-stationarity) \cite{mehlitz2020stationarity}. Since these stationarity conditions for MPDSC can be derived from MPDC easily, we omit the discussion of them in this paper.

%\section{Sufficient conditions for MSCQ}\label{section-3.3}
%In this section we provide some verifiable sufficient conditions for MSCQ.

\subsection{Constraint qualifications for MPDSC from MPDC}
Now using the formulas for tangent and normal cones in Proposition \ref{thm-3.1}, we obtain some constraint qualifications for MPDSC from the corresponding ones for MPDC (see e.g.,  \cite[Definitions 3.1 and 4.2]{Xu-2021}). 
%In the following definition, we also propose a new constraint qualification called the weak linear independence constraint qualification (WLICQ) for MPDSC, which builds a bridge between MPDSC-LICQ and MPDSC-GMFCQ.

\begin{definition}\label{def-3.12}
    Let $x^{\ast}\in \mathcal{F}$ be a feasible point for MPDSC \emph{(\ref{MPDSC})}. 
    \begin{enumerate}
%        \item  We say that $x^{\ast}$ satisfies MPDSC weak linear independence constraint qualification (MPDSC-WLICQ) if there is no nonzero vector $\lambda = (\lambda^g, \lambda^h, \lambda^{\Phi})^{T} \in \mathbb{R}^{m}\times \mathbb{R}^n\times \prod_{i=1}^{l}\mathbb{R}^q$ with $\lambda^{\Phi}_{i}\in  \bigcup\limits_{r\in I(\Phi_{i}(x^{\ast}))}S_{r}^{\perp}$ such that
%        \begin{eqnarray}
%             && 0 = \sum\limits_{i\in \mathcal{I}_{g}(x^{\ast})} \lambda_{i}^{g} \nabla g_{i}(x^{\ast}) + \sum\limits_{i=1}^{p} \lambda_{i}^{h} \nabla h_{i}(x^{\ast}) + \sum\limits_{i=1}^{l} \nabla \Phi_{i}(x^{\ast})^{T}\lambda^{\Phi}_{i}. \label{MPDSC-WLICQ}
%             % & \lambda^{\Phi}_{i}\in  \bigcup\limits_{r\in I(\Phi_{i}(x^{\ast}))}S_{r}^{\perp},\:i=1,\dots,l. \notag
%        \end{eqnarray}
         \item  We say that $x^{\ast}$ satisfies MPDSC linear independence constraint qualification (MPDSC-LICQ) if there is no nonzero vector $(\lambda^g, \lambda^h,$ $\lambda^{\Phi}) \in \mathbb{R}^{m}\times \mathbb{R}^n\times \prod_{i=1}^{l}\mathbb{R}^q$ with  $\lambda^{\Phi}_{i}\in \sum\limits_{r\in I(\Phi_{i}(x^{\ast}))}S_{r}^{\perp}$ satisfying 
        \begin{eqnarray}
             && 0 = \sum\limits_{i\in \mathcal{I}_{g}(x^{\ast})} \lambda_{i}^{g} \nabla g_{i}(x^{\ast}) + \sum\limits_{i=1}^{p} \lambda_{i}^{h} \nabla h_{i}(x^{\ast}) + \sum\limits_{i=1}^{l} \nabla \Phi_{i}(x^{\ast})^{T}\lambda^{\Phi}_{i}. \label{MPDSC-WLICQ}
             % & \lambda^{\Phi}_{i}\in  \bigcup\limits_{r\in I(\Phi_{i}(x^{\ast}))}S_{r}^{\perp},\:i=1,\dots,l. \notag
        \end{eqnarray}
        \item  We say that $x^{\ast}$ satisfies MPDSC no nonzero abnormal multiplier constraint qualification (MPDSC-NNAMCQ) if there is no nonzero vector $ (\lambda^g, \lambda^h,$ $\lambda^{\Phi}) \in \mathbb{R}^{m}\times \mathbb{R}^n\times \prod_{i=1}^{l}\mathbb{R}^q$ with $ \lambda^{g}_{i}\geq 0,\:i\in \mathcal{I}_{g}(x^{\ast})$, $\lambda^{\Phi}_{i}\in  \bigcup\limits_{r\in I(\Phi_{i}(x^{\ast}))}S_{r}^{\perp}$ satisfying (\ref{MPDSC-WLICQ}). 
       % \begin{eqnarray}
%             && 0 = \sum\limits_{i\in \mathcal{I}_{g}(x^{\ast})} \lambda_{i}^{g} \nabla g_{i}(x^{\ast}) + \sum\limits_{i=1}^{p} \lambda_{i}^{h} \nabla h_{i}(x^{\ast}) + \sum\limits_{i=1}^{l} \nabla \Phi_{i}(x^{\ast})^{T}\lambda^{\Phi}_{i}. \label{MPDSC-WLICQ}
%             % & \lambda^{\Phi}_{i}\in  \bigcup\limits_{r\in I(\Phi_{i}(x^{\ast}))}S_{r}^{\perp},\:i=1,\dots,l. \notag
%        \end{eqnarray}%
        %\emph{(\ref{MPDSC-WLICQ})} 
        %such that $ \lambda^{g}_{i}\geq 0,\:i\in \mathcal{I}_{g}(x^{\ast})$.
          \item We say that $x^{\ast}$ satisfies MPDSC  relaxed constant positive linear dependence constraint  qualification (MPDSC-RCPLD) if the following conditions hold.
        \begin{itemize}
        \item[\emph{(i)}] The vectors $\{\nabla h_{i}(x)\}^{p}_{i=1}$ have the same rank for all $x \in \mathbb{B}_{\varepsilon}(x^{\ast})$ for some $\varepsilon>0$;
        \item[\emph{(ii)}] Let $J\subseteq \{1,\dots,p\}$ be such that the set of vectors $\{\nabla h_{i}(x^{\ast})\}_{i\in J}$ is a basis for \emph{span}$\{\nabla h_{i}(x^{\ast})\}_{i=1}^{p}$. If there exist index sets $I\subseteq \mathcal{I}_{g}(x^{\ast})$, a nonzero vector $ (\lambda^g, \lambda^h, \lambda^{\Phi})\in \mathbb{R}^{m}\times \mathbb{R}^n\times \prod_{i=1}^{l}\mathbb{R}^q$ with $\lambda^{g}_{i}\geq 0, i\in I$ and $\lambda^{\Phi}_{i}\in  \bigcup\limits_{r\in I(\Phi_{i}(x^{\ast}))}S_{r}^{\perp}$, $i\in \{1,\dots,l\}$ satisfying
         \begin{eqnarray}
             && 0 = \sum\limits_{i\in I} \lambda_{i}^{g} \nabla g_{i}(x^{\ast}) + \sum\limits_{i\in J} \lambda_{i}^{h} \nabla h_{i}(x^{\ast}) + \sum\limits_{i=1}^{l} \nabla \Phi_{i}(x^{\ast})^{T}\lambda^{\Phi}_{i}, \label{MPDSC-WLICQnew}
             % & \lambda^{\Phi}_{i}\in  \bigcup\limits_{r\in I(\Phi_{i}(x^{\ast}))}S_{r}^{\perp},\:i=1,\dots,l. \notag
        \end{eqnarray}
%        \begin{equation}\label{eqn-3.37}
%            0 = \sum\limits_{i\in I} \lambda_{i}^{g} \nabla g_{i}(x^{\ast}) + \sum\limits_{i\in J} \lambda_{i}^{h} \nabla h_{i}(x^{\ast}) + \sum\limits_{i=1}^{l} \nabla \Phi_{i}(x^{\ast})^{T}\lambda^{\Phi}_{i},
%        \end{equation}
        then the set of vectors
        \begin{equation}\label{eqn-3.38}
            \{\nabla g_{i}(x^{k})\}_{i\in I} \cup \{\nabla h_{i}(x^{k})\}_{i\in J} \cup \bigcup\limits_{\beta_{i}\in A_{i},i\in \{1,\dots,l\}} \{\nabla \Phi_{i}(x^{k})^{T}\beta_{i}\}
        \end{equation}
        is linearly dependent for $k$ sufficiently large, for all sequences $\{x^{k}\}$ satisfying $x^{k}\to x^{\ast}, x^{k}\neq x^{\ast}$ as $k\to \infty$ and any set of linearly independent vectors $A_{i}$ with the following conditions:
        \begin{equation}
       \begin{array}{ll}
       \mbox{ if }   \lambda^{\Phi}_{i} \not =0, &  \mbox{ then }
        A_{i} \subseteq A_{S}(\Phi_{i}(x^{\ast})), \quad  \lambda^{\Phi}_{i} \in \mathcal{G}(A_{i}) \subseteq \bigcup\limits_{r\in I(\Phi_{i}(x^{\ast}))}S_{r}^{\perp};\\
     \mbox{ if }  \lambda^{\Phi}_{i} = 0,   &  \mbox{ then } A_{i} = \emptyset.
     \end{array}\label{generatorset}
        \end{equation} 
%        \begin{equation}\label{eqn-3.39}
%            A_{i} \subseteq A_{S}(\Phi_{i}(x^{\ast})), 0 \neq \lambda^{\Phi}_{i} \in \mathcal{G}(A_{i}) \subseteq N_{S}(\Phi_{i}(x^{\ast})) = \bigcup\limits_{r\in I(\Phi_{i}(x^{\ast}))}S_{r}^{\perp}
%        \end{equation} 
%        and $A_{i} = \emptyset$ if $\lambda^{\Phi}_{i} = 0$ for $i=1,\dots,l$.
        \end{itemize}
%        \begin{equation*}
%            \lambda^{g}_{i}\geq 0,\:i\in \mathcal{I}_{g}(x^{\ast})\:\:\emph{and}\:\: \lambda^{\Phi}_{i}\in N_{S}(\Phi_{i}(x^{\ast})) = \bigcup\limits_{r\in I(\Phi_{i}(x^{\ast}))}S_{r}^{\perp},\:i=1,\dots,l.
%        \end{equation*}
         \item  We say that $x^{\ast}$ satisfies MPDSC constant positive linear dependence constraint  qualification (MPDSC-CPLD) if all condition (ii) in  MPDSC-RCPLD hold with the index set $J$ is taken as a arbitrary subset of $\{1,\dots,p\}$.
%          there exists index sets $I\subseteq \mathcal{I}_{g}(x^{\ast})$, $J\subseteq \{1,\dots,p\}$ and $L\subseteq \{1,\dots,l\}$, a nonzero vector $\lambda = (\lambda^g, \lambda^h, \lambda^{\Phi})^{T} \in \mathbb{R}^{m}\times \mathbb{R}^n\times \prod_{i=1}^{l}\mathbb{R}^q$ with $\lambda^{g}_{i}\geq 0, i\in I$ and $\lambda^{\Phi}_{i} \in  \bigcup\limits_{r\in I(\Phi_{i}(x^{\ast}))}S_{r}^{\perp}, i\in L$ satisfying \emph{(\ref{MPDSC-WLICQnew})}
%        %\begin{equation*}
%%         $   0 = \sum\limits_{i\in I} \lambda_{i}^{g} \nabla g_{i}(x^{\ast}) + \sum\limits_{i\in J} \lambda_{i}^{h} \nabla h_{i}(x^{\ast}) + \sum\limits_{i\in L} \nabla \Phi_{i}(x^{\ast})^{T} \lambda^{\Phi}_{i},$
%       % \end{equation*}
%        then the set of vectors
%        %\begin{equation*}
%          $  \{\nabla g_{i}(x^{k})\}_{i\in I} \cup \{\nabla h_{i}(x^{k})\}_{i\in J} \cup \bigcup\limits_{\beta_{i}\in A_{i},i\in L} \{\nabla \Phi_{i}(x^{k})^{T}\beta_{i}\}$
%       % \end{equation*}
%        is linearly dependent for $k$ sufficiently large, for all sequences $\{x^{k}\}$ satisfying $x^{k}\to x^{\ast}, x^{k}\neq x^{\ast}$ as $k\to \infty$ and any set of linearly independent vectors $A_{i}$ satisfying condition (\ref{generatorset}).
%%         where
%        \begin{equation*}
%            A_{i} \subseteq A_{S}(\Phi_{i}(x^{\ast})), 0 \neq \lambda^{\Phi}_{i} \in \mathcal{G}(A_{i}) \subseteq N_{S}(\Phi_{i}(x^{\ast})) = \bigcup\limits_{r\in I(\Phi_{i}(x^{\ast}))}S_{r}^{\perp}
%        \end{equation*} 
%        and $A_{i} = \emptyset$ if $\lambda^{\Phi}_{i} = 0$ for $i=1,\dots,l$.
%        
        \item  We say that $x^{\ast}$ satisfies MPDSC constant rank constraint qualification (MPDSC-CRCQ) if for every index sets $I\subseteq \mathcal{I}_{g}(x^{\ast}), J\subseteq \{1,\dots,p\}$, $L\subseteq \{1,\dots,l\}$ and $\lambda^{\Phi}_{i} \in  \bigcup\limits_{r\in I(\Phi_{i}(x^{\ast}))}S_{r}^{\perp}, i\in L$,  the set of vectors
        \begin{equation*}
            \{\nabla g_{i}(x^{\ast})\}_{i\in I} \cup \{\nabla h_{i}(x^{\ast})\}_{i\in J} \cup \bigcup\limits_{\beta_{i}\in A_{i},i\in L} \{\nabla \Phi_{i}(x^{\ast})^{T}\beta_{i}\}
        \end{equation*}
        and the set of vectors
        \begin{equation*}
            \{\nabla g_{i}(x^{k})\}_{i\in I} \cup \{\nabla h_{i}(x^{k})\}_{i\in J} \cup \bigcup\limits_{\beta_{i}\in A_{i},i\in L} \{\nabla \Phi_{i}(x^{k})^{T}\beta_{i}\}
        \end{equation*}
        have the same rank for $k$ sufficiently large, for all sequences $\{x^{k}\}$ satisfying $x^{k}\to x^{\ast}, x^{k}\neq x^{\ast}$ as $k\to \infty$ and any set of linearly independent vectors $A_{i}$ satisfying  condition (\ref{generatorset}).
      %  and $A_{i} = \emptyset$ if $\lambda^{\Phi}_{i} = 0$ for $i=1,\dots,l$.
        
        \item  We say that $x^{\ast}$ satisfies MPDSC relaxed constant rank constraint qualification (MPDSC-RCRCQ) if the index set $J$ is taken as $\{1,\dots,p\}$ in MPDSC-CRCQ.

        \item  We say that $x^{\ast}$ satisfies MPDSC-ERCPLD if the following conditions hold.
        \begin{itemize}
        \item[\emph{(i)}] The vectors $\{\nabla h_{i}(x)\}^{p}_{i=1}$ have the same rank for all $x \in \mathbb{B}_{\varepsilon}(x^{\ast})$ for some $\varepsilon>0$;
        \item[\emph{(ii)}] Let $J\subseteq \{1,\dots,p\}$ be such that the set of vectors $\{\nabla h_{i}(x^{\ast})\}_{i\in J}$ is a basis for \emph{span}$\{\nabla h_{i}(x^{\ast})\}_{i=1}^{p}$. If there exist index sets $I\subseteq \mathcal{I}_{g}(x^{\ast})$ and $L\subseteq \{1,\dots,l\}$, $\lambda^{\Phi}_{i} \in  \bigcup\limits_{r\in I(\Phi_{i}(x^{\ast}))}S_{r}^{\perp}, i\in L$ such that the set of vectors
        \begin{equation*}
            \{\nabla g_{i}(x^{\ast})\}_{i\in I} \cup \Bigl \{ \{\nabla h_{i}(x^{\ast})\}_{i\in J} \cup \bigcup\limits_{\beta_{i}\in A_{i},i\in L} \{\nabla \Phi_{i}(x^{\ast})^{T}\beta_{i}\} \Bigr \}
        \end{equation*}
        is positive linearly dependent, then the set of vectors
        \begin{equation*}
            \{\nabla g_{i}(x^{k})\}_{i\in I} \cup \{\nabla h_{i}(x^{k})\}_{i\in J} \cup \bigcup\limits_{\beta_{i}\in A_{i},i\in L} \{\nabla \Phi_{i}(x^{k})^{T}\beta_{i}\}
        \end{equation*}
        is linearly dependent for $k$ sufficiently large, for all sequences $\{x^{k}\}$ satisfying $x^{k}\to x^{\ast}, x^{k}\neq x^{\ast}$ as $k\to \infty$ and any set of linearly independent vectors $A_{i}$ satisfying condition (\ref{generatorset}).
%        \begin{equation*}
%            A_{i} \subseteq A_{S}(\Phi_{i}(x^{\ast})), 0 \neq \lambda^{\Phi}_{i} \in \mathcal{G}(A_{i}) \subseteq N_{S}(\Phi_{i}(x^{\ast})) = \bigcup\limits_{r\in I(\Phi_{i}(x^{\ast}))}S_{r}^{\perp}
%        \end{equation*} 
%        and $A_{i} = \emptyset$ if $\lambda^{\Phi}_{i} = 0$ for $i=1,\dots,l$.
        \end{itemize}

    \end{enumerate}
\end{definition}

%Like nonlinear programs, we also have the concept of error bounds for MPDSC. Note that we should describe the deviation of the generalized equations $\Phi_{i}(x)\in S$ for $i = 1,\dots,l$ in terms of distance functions. The following definition can be obtained from \cite[Definition 2.2]{Xu-2021} directly.

\subsection{RCPLD as a sufficient condition for MSCQ}
Unlike the differentiable nonlinear program,  RCPLD for MPDC is not a sufficient condition for MSCQ in general. In order to obtain a RCPLD type sufficent condition for MSCQ,  Xu and Ye introduced MPDC-piecewise RCPLD and show that it is a sufficient condition for error bounds \cite[Theorem 4.2]{Xu-2021}. It turns out that for MPDSC many constraint qualifications such as CRCQ, RCRCQ, CPLD, ERCPLD, and RCPLD coincide with their piecewise versions, respectively. Here we only focus on the discussions on RCPLD and the piecewise RCPLD and omit  discussions on other constraint qualifications as well as their piecewise versions since the derivation would be similar. Consequently, since RCPLD coincides with piecewise RCPLD,  RCPLD implies error bounds for MPDSC.

First we present a  simple example to illustrate the equivalence of RCPLD and the piecewise RCPLD for MPDSC.

\begin{example}
   Consider the following cardinality constrained system in $\mathbb{R}^3$:
    \begin{equation}\label{eqn-3.45}
        g(x)\leq 0, h(x) = 0, \|x\|_{0}\leq 2.
    \end{equation}
    From \emph{Figure \ref{two examples}} we know that problem \emph{(\ref{eqn-3.45})} is equivalent to
    \begin{equation*}
        g(x)\leq 0, h(x) = 0, \Phi(x) := x\in S,
    \end{equation*}
    with $S = S_{1} \cup S_{2} \cup S_{3}$ such that $S_{1} = \{0\} \times \mathbb{R} \times \mathbb{R}$, $S_{2} = \mathbb{R} \times \{0\} \times \mathbb{R}$ and $S_{3} = \mathbb{R} \times \mathbb{R} \times \{0\}$.
     Assume that point $x^{\ast} = (0,0,1)$ is feasible for system \emph{(\ref{eqn-3.45})}. According to Definition \ref{def-3.12}(7), we say that RCPLD holds at $x^*$ if 
   \begin{itemize}
        \item[\emph{(i)}] The vectors $\{\nabla h_{i}(x)\}^{p}_{i=1}$ have the same rank for all $x \in \mathbb{B}_{\varepsilon}(x^{\ast})$ for some ${\varepsilon}>0$;
        \item[\emph{(ii)}] Let $J\subseteq \{1,\dots,p\}$ be such that the set of vectors $\{\nabla h_{i}(x^{\ast})\}_{i\in J}$ is a basis for \emph{span}$\{\nabla h_{i}(x^{\ast})\}_{i=1}^{p}$. If there exist index an index set  $I\subseteq \mathcal{I}_{g}(x^{\ast})$, a multiplier $\lambda = (\lambda^g, \lambda^h, \lambda^{\Phi})\in \mathbb{R}^{m}\times \mathbb{R}^n\times \mathbb{R}^3$ with $\lambda^{g}_{i}\geq 0, i\in I$ and  $\lambda^{\Phi}\in N_S(x^*)=S_1^\perp \cup S_2^\perp$ such that
        \begin{equation}
            0 = \sum\limits_{i\in I} \lambda_{i}^{g} \nabla g_{i}(x^{\ast}) + \sum\limits_{i\in J} \lambda_{i}^{h} \nabla h_{i}(x^{\ast}) + \lambda^{\Phi},\label{eqn35}
        \end{equation}
        then the set of vectors
        \begin{equation*}
            \{\nabla g_{i}(x^{k})\}_{i\in I} \cup \{\nabla h_{i}(x^{k})\}_{i\in J} \cup A
        \end{equation*}
        is linearly dependent for $k$ sufficiently large, for all sequences $\{x^{k}\}$ satisfying $x^{k}\to x^{\ast}, x^{k}\neq x^{\ast}$ as $k\to \infty$ and any set of linearly independent vectors $A\subseteq A_S(x^*)=\{e_1,e_2\}$ such that 
        $$ \begin{array}{ll}
        \mbox{ if } \lambda^\Phi\not =0 & \mbox{ then } \lambda^\Phi  \in {\cal G}(A)\subseteq S_1^\perp \cup S_2^\perp,\\
        \mbox{ if } \lambda^\Phi =0 & \mbox{ then } A=\emptyset.\end{array} $$ 
       % and $A=\emptyset $ if $\lambda^\Phi =0$.
        \end{itemize}
If $\lambda^\Phi\not =0$, then since $\lambda^{\Phi}\in S_1^\perp \cup S_2^\perp$, either $\lambda^\Phi \in {\cal G}(e_1) \subseteq  S_1^\perp= \mathbb{R} \times \{0\} \times \{0\}$ or $\lambda^\Phi \in {\cal G}(e_2) \subseteq  S_2^\perp=\{0\} \times \mathbb{R} \times \{0\}$. Hence the set $A$ above must be taken as $e_1$ if $\lambda^\Phi \in  S_1^\perp$ and  $e_2$ if $\lambda^\Phi_2 \in  S_2^\perp$.
    
    % It is easy to see that $l=1$, $R=3$, $x^{\ast}\in S_{1}$ and $x^{\ast}\in S_{2}$.
We now consider the piecewise RCPLD.     There are three partitions of index set $\{1\}$ into sets $P = \{P_{1},P_{2},P_{3}\}$: \emph{(i)} $P_{1} = \{1\}, P_{2} = \emptyset, P_{3} = \emptyset$; \emph{(ii)} $P_{1} = \emptyset, P_{2} = \{1\}, P_{3} = \emptyset$; \emph{(iii)} $P_{1} = \emptyset, P_{2} = \emptyset, P_{3} = \{1\}$. Since $x^{\ast}\notin S_{3}$, we only have two possible subsystems. Therefore, the piecewise RCPLD holds for system \emph{(\ref{eqn-3.45})} at $x^*$ if RCPLD holds for each of the following two subsystems at $x^*$:
    \[ (\text{$\mathcal{P}_{1}$})\left\{ \begin{gathered}
    g(x)\leq 0, \\
    h(x) = 0, \\
    x\in S_{1}.
    \end{gathered} \qquad \qquad \emph{and} \qquad \qquad (\text{$\mathcal{P}_{2}$})\right\{
    \begin{gathered}
    g(x)\leq 0, \\
    h(x) = 0, \\
    x\in S_{2}.
    \end{gathered}
    \]
But RCPLD for subsystem  $\mathcal{P}_{1}$ holding at $x^*$ if   condition (i) above holds and the following conditions hold. Let $J\subseteq \{1,\dots,p\}$ be such that the set of vectors $\{\nabla h_{i}(x^{\ast})\}_{i\in J}$ is a basis for \emph{span}$\{\nabla h_{i}(x^{\ast})\}_{i=1}^{p}$. If there exist  an index set  $I\subseteq \mathcal{I}_{g}(x^{\ast})$, a multiplier $\lambda = (\lambda^g, \lambda^h, \lambda^{\Phi})\in \mathbb{R}^{m}\times \mathbb{R}^n\times \mathbb{R}^3$ with $\lambda^{g}_{i}\geq 0, i\in I$ and  $\lambda^{\Phi}\in S_1^\perp $ such that (\ref{eqn35}) holds,
%        \begin{equation*}
%            0 = \sum\limits_{i\in I} \lambda_{i}^{g} \nabla g_{i}(x^{\ast}) + \sum\limits_{i\in J} \lambda_{i}^{h} \nabla h_{i}(x^{\ast}) + \lambda^{\Phi},
%        \end{equation*}
        then the set of vectors
        $
            \{\nabla g_{i}(x^{k})\}_{i\in I} \cup \{\nabla h_{i}(x^{k})\}_{i\in J} \cup A
       $
        is linearly dependent for $k$ sufficiently large, for all sequences $\{x^{k}\}$ satisfying $x^{k}\to x^{\ast}, x^{k}\neq x^{\ast}$ as $k\to \infty$ and any set of linearly independent vectors $A\subseteq A_{S_1}=\{e_1\}$ such that 
        $$ \begin{array}{ll}
        \mbox{ if } \lambda^\Phi\not =0 & \mbox{ then } \lambda^\Phi  \in {\cal G}(A)\subseteq S_1^\perp ,\\
        \mbox{ if } \lambda^\Phi =0 & \mbox{ then } A=\emptyset.\end{array} $$ 
       RCPLD for subsystem  $\mathcal{P}_{2}$ holding at $x^*$ if the RCPLD for subsystem $\mathcal{P}_{1}$ with $S_1^\perp$ replaced by $S_2^\perp$ and $e_1$ replaced by $e_2$. Hence obviously RCPLD coincides with the piecewise RCPLD.
       % and $A=\emptyset $ if $\lambda^\Phi =0$.
% Since $ N_S(x^*)=S_1^\perp \cup S_2^\perp=N_{S_1}(x^*) \cup N_{S_2}(x^*)$ and $ A_{S}(x^{\ast})=A_{S_1}\cup A_{S_2}=\{e_1\}\cup \{e_2\}$, from the above description of RCPLD,  it is obvious that the RCPLD  satisfying for each subsystem  at $x^*$ is identical to the RCPLD for  system \emph{(\ref{eqn-3.45})} at $x^*$.
 \end{example}

 Now we consider the general case and let $x^{\ast}$ be feasible for MPDSC (\ref{MPDSC}). Let sets $P_{1},\dots,P_{R}$ (sometimes some of them may be empty) be a partition of $\{1,\dots,l\}$. We denote such partition by $P:=\{P_{1},\dots,P_{R}\}$ and consider the subsystem for partition $P$:
\begin{equation}\label{pro-3.41}
    \begin{cases}
    g(x)\leq 0,\: h(x) = 0,\\
    \Phi_{i}(x)\in S_{1},\: i\in P_{1},\\
    \quad\quad\quad \vdots\\
    \Phi_{i}(x)\in S_{R},\: i\in P_{R}.
    \end{cases}
\end{equation}
We denote the feasible region of subsystem (\ref{pro-3.41}) by $\mathcal{F}_{P}$. 
%Since each partition $P = \{P_{1},\dots,P_{R}\}$ is one of the possible partitions of $\{1,\dots,l\}$, we have $\mathcal{F}_{P} \subseteq \mathcal{F}$. Conversely, for any $x^{\ast}\in \mathcal{F}$, there exists one partition $P$ such that $x^{\ast}\in \mathcal{F}_{P}$.
Applying the definition for MPDC \cite[Definition 4.1]{Xu-2021} to MPDSC we have the following definition.
\begin{definition}\label{def-3.14}
    \emph{(MPDSC-PRCPLD)}  We say that the piecewise RCPLD holds for MPDSC \emph{(\ref{MPDSC})} at $x^{\ast}\in \mathcal{F}$, if MPDSC-RCPLD holds for subsystem \emph{(\ref{pro-3.41})} for any partition $P = \{P_{1},\dots,P_{R}\}$ such that $x^{\ast}\in \mathcal{F}_{P}$. That is, the following conditions hold for any partition $P = \{P_{1},\dots,P_{R}\}$ such that $x^{\ast}\in \mathcal{F}_{P}$.
    \begin{itemize}
        \item[\emph{(i)}] The vectors $\{\nabla h_{i}(x)\}^{p}_{i=1}$ have the same rank for all $x \in \mathbb{B}_{\varepsilon}(x^{\ast})$ for some $\varepsilon>0$;
        \item[\emph{(ii)}] Let $J\subseteq \{1,\dots,p\}$ be such that the set of vectors $\{\nabla h_{i}(x^{\ast})\}_{i\in J}$ is a basis for \emph{span}$\{\nabla h_{i}(x^{\ast})\}_{i=1}^{p}$. If there exist index sets $I\subseteq \mathcal{I}_{g}(x^{\ast})$, a nonzero vector $ (\lambda^g, \lambda^h, \lambda^{\Phi})^{T}\in \mathbb{R}^{m}\times \mathbb{R}^n\times \prod_{i=1}^{l}\mathbb{R}^q$ with $\lambda^{g}_{i}\geq 0, i\in I$ and $\lambda^{\Phi}_{i}\in  S_{r}^{\perp}$ for $i\in P_{r}$, $r = 1,\dots,R$ such that
        \begin{equation}\label{eqn-3.42}
            0 = \sum\limits_{i\in I} \lambda_{i}^{g} \nabla g_{i}(x^{\ast}) + \sum\limits_{i\in J} \lambda_{i}^{h} \nabla h_{i}(x^{\ast}) + \sum\limits_{i\in P_{1}} \nabla \Phi_{i}(x^{\ast})^{T}\lambda^{\Phi}_{i} + \cdots + \sum\limits_{i\in P_{R}} \nabla \Phi_{i}(x^{\ast})^{T}\lambda^{\Phi}_{i},
        \end{equation}
        then the set of vectors
        \begin{equation}\label{eqn-3.43}
            \{\nabla g_{i}(x^{k})\}_{i\in I} \cup \{\nabla h_{i}(x^{k})\}_{i\in J}
            % \cup \bigcup\limits_{r=1,\dots,R}
             \bigcup\limits_{\beta^{r}_{i}\in A^{r}_{i},i\in P_{r},r=1,\dots,R} \{\nabla \Phi_{i}(x^{k})^{T} \beta^{r}_{i}\}
        \end{equation}
        is linearly dependent for $k$ sufficiently large, for all sequences $\{x^{k}\}$ satisfying $x^{k}\to x^{\ast}, x^{k}\neq x^{\ast}$ as $k\to \infty$ and any set of linearly independent vectors $A^{r}_{i}$ with $r\in I(\Phi_{i}(x^{\ast}))$ satisfying the following condition
        \begin{equation}\label{eqn-3.44}
   \begin{array}{ll}
   \mbox{ if } \lambda^{\Phi}_{i} \not =0,  &  \mbox{ then }  
            %(\Phi_{i}(x^{\ast}))
             \quad  \lambda^{\Phi}_{i} \in \mathcal{G}(A^{r}_{i}) \subseteq  S_{r}^{\perp},   \quad   A^{r}_{i} \subseteq A_{S_{r}}\\
            \mbox{ if } \lambda^{\Phi}_{i} = 0, &  \mbox{ then } \quad A^{r}_{i} = \emptyset.
            \end{array}
        \end{equation} 
%        and $A^{r}_{i} = \emptyset$ if $\lambda^{\Phi}_{i} = 0$ for $i\in P_{r}$, $r \in \{ 1,\dots,R\}$.
        \end{itemize}
\end{definition}

%    That is, we say that $x^{\ast}$ satisfies the piecewise the RCPLD for system \emph{(\ref{eqn-3.45})} if the following conditions hold:
%    \begin{itemize}
%        \item[\emph{(i)}] The vectors $\{\nabla h_{i}(x)\}^{p}_{i=1}$ have the same rank for all $x \in \mathbb{B}_{\varepsilon}(x^{\ast})$ for some ${\varepsilon}>0$;
%        \item[\emph{(ii)}] Let $J\subseteq \{1,\dots,p\}$ be such that the set of vectors $\{\nabla h_{i}(x^{\ast})\}_{i\in J}$ is a basis for \emph{span}$\{\nabla h_{i}(x^{\ast})\}_{i=1}^{p}$. If there exist index sets $I\subseteq \mathcal{I}_{g}(x^{\ast})$, $\lambda = (\lambda^g, \lambda^h, \lambda^{I})\in \mathbb{R}^{m}\times \mathbb{R}^n\times \mathbb{R}$ with $\lambda^{g}_{i}\geq 0, i\in I$ such that
%        \begin{equation*}
%            0 = \sum\limits_{i\in I} \lambda_{i}^{g} \nabla g_{i}(x^{\ast}) + \sum\limits_{i\in J} \lambda_{i}^{h} \nabla h_{i}(x^{\ast}) + \lambda^{I}e_1,
%        \end{equation*}
%        then the set of vectors
%        \begin{equation*}
%            \{\nabla g_{i}(x^{k})\}_{i\in I} \cup \{\nabla h_{i}(x^{k})\}_{i\in J} \cup \{e_1\}
%        \end{equation*}
%        is linearly dependent for $k$ sufficiently large, for all sequences $\{x^{k}\}$ satisfying $x^{k}\to x^{\ast}, x^{k}\neq x^{\ast}$ as $k\to \infty$. Moreover the above implication holds with $e_1$ replaced by $e_2$ as well.
%        %, where the vector $e$ can be taken as $e_{1} = (1,0,0)^{T}$ or $e_{2} = (0,1,0)^{T}$.
%        \end{itemize}

%If we write down RCPLD for system (\ref{eqn-3.45}), we can see RCPLD and picewise RCPLD are the same. 

Now we show that MPDSC-RCPLD coincides with MPDSC-piecewise RCPLD. As we can see as follows, the following two equalities from Proposition \ref{thm-3.1} and Lemma \ref{lem-3.1} play key roles in the proof of Theorem \ref{thm-3.16}:
\begin{align}
    N_{S}(\Phi_{i}(x^{\ast})) &= \bigcup\limits_{r\in I(\Phi_{i}(x^{\ast}))} N_{S_{r}}(\Phi_{i}(x^{\ast})), \label{eqn-3.46}\\
    A_{S}(\Phi_{i}(x^{\ast})) &= \bigcup\limits_{r\in I(\Phi_{i}(x^{\ast}))} A_{S_{r}}. \label{eqn-3.47}
\end{align}
In fact, we can give a more accurate description of generator sets $A_{i},\:i=1,\dots,l$ when we consider MPDSC instead of general MPDC. The following lemma is useful in what follows. 

%\begin{lemma}\label{lem-3.2}
%  %  Assume that MPDSC-RCPLD holds at $x^{\ast}$.
%   Given $\lambda^{\Phi}_{i}\in N_{S}(\Phi_{i}(x^{\ast}))$, for any set of linearly independent vectors $A_{i},\:i=1,\dots,l$ such that
%    \begin{equation*}
%        A_{i}\subseteq A_{S}(\Phi_{i}(x^{\ast})), 0\neq \lambda^{\Phi}_{i}\in \mathcal{G}(A_{i}) \subseteq N_{S}(\Phi_{i}(x^{\ast})),
%    \end{equation*} 
%    there must exist $r\in I(\Phi_{i}(x^{\ast}))$ such that
%    \begin{equation}
%        A_{i}\subseteq A_{S_{r}}, 0\neq \lambda^{\Phi}_{i}\in \mathcal{G}(A_{i}) \subseteq \mathcal{G}(A_{S_{r}}).
%    \end{equation}
%\end{lemma}
\begin{lemma}\label{lem-3.2}
  %  Assume that MPDSC-RCPLD holds at $x^{\ast}$.
   For  $0\neq\lambda\in N_{S}(y)=\bigcup\limits_{r\in I(y)} S_r^\perp$, suppose that  $\lambda$ is generated by a set of linearly independent vectors $A$ which is a subset of the generator set of the limiting normal cone of $S$ at $y$, i.e., 
    \begin{equation*}
         \lambda\in \mathcal{G}(A) \subseteq N_{S}(y)=\bigcup\limits_{r\in I(y)}S_r^\perp,\quad A \subseteq A_{S}(y)=\bigcup\limits_{r\in I(y)} A_{S_{r}}.
   \end{equation*}
   Then if $\lambda \in N_{S_{r}}(y)=S_r^\perp$, $\lambda$ is also generated by a subset of linearly independent vectors $A$ which is a subset of the generator of $S_r$, i.e.,
   \begin{equation}
        \lambda\in \mathcal{G}(A) \subseteq \mathcal{G}(A_{S_{r}})=S_r^\perp,  \quad  A\subseteq A_{S_{r}},
   \label{eqn-3.40}
   \end{equation}
\end{lemma}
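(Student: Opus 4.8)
The plan is to recognize that, although the statement is dressed up with all the bookkeeping inherited from the RCPLD setting, the conclusion only genuinely uses that $\lambda\neq 0$ and that $\lambda$ lies in the single subspace $S_r^\perp$; the hypotheses involving $A$, $A_S(y)$ and $N_S(y)$ merely place us in the situation arising in Definition \ref{def-3.12}, and --- this is worth flagging in the proof --- the set $A$ produced in the conclusion (\ref{eqn-3.40}) is a \emph{new} set, not the one in the hypothesis. One cannot simply reuse the hypothesis' $A$: a subspace such as $\mathcal{G}(A)$ that is contained in the union $\bigcup_{r\in I(y)}S_r^\perp$ is already contained in \emph{one} of the $S_{r'}^\perp$, but this $r'$ may differ from the index $r$ singled out in the conclusion, and even when $\mathcal{G}(A)\subseteq S_r^\perp$ the members of $A$ need not literally be among the prescribed generators $a_j^r$ of $S_r^\perp$. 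Hence the generator must be rebuilt from the basis $A_{S_r}$ of $S_r^\perp$.

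First I would recall that, by the representation (\ref{eqn-3.3}) of $S_r$, the family $A_{S_r}=\{a_j^r\}_{j\in\mathcal{E}_r}$ is linearly independent and $\mathcal{G}(A_{S_r})=\mathrm{span}(A_{S_r})=S_r^\perp$. Since $\lambda\in N_{S_r}(y)=S_r^\perp$, there are unique scalars $\{c_j\}_{j\in\mathcal{E}_r}$ with $\lambda=\sum_{j\in\mathcal{E}_r}c_j a_j^r$, and since $\lambda\neq 0$ the support $J:=\{\, j\in\mathcal{E}_r : c_j\neq 0\,\}$ is nonempty. Then I would set $\widetilde A:=\{a_j^r:j\in J\}$ and verify the three required properties: $\widetilde A$ is linearly independent, being a subset of the linearly independent family $A_{S_r}$; $\lambda=\sum_{j\in J}c_j a_j^r\in\mathrm{span}(\widetilde A)=\mathcal{G}(\widetilde A)$; and $\mathcal{G}(\widetilde A)=\mathrm{span}(\widetilde A)\subseteq\mathrm{span}(A_{S_r})=\mathcal{G}(A_{S_r})=S_r^\perp$, while $\widetilde A\subseteq A_{S_r}$. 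Relabelling $\widetilde A$ as $A$ yields exactly (\ref{eqn-3.40}). If minimality of the generator is immaterial one may even take $\widetilde A=A_{S_r}$ directly.

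I do not anticipate any real obstacle: the only facts needing a line of justification are that a subset of a linearly independent family is again linearly independent, and that $\lambda\neq 0$ is what keeps $\widetilde A$ nonempty --- which matters because in the RCPLD bookkeeping the case $\lambda^\Phi_i=0$ is handled separately with $A_i=\emptyset$, so the lemma is only invoked when $\lambda\neq 0$. The only subtlety, the conceptual point noted in the first paragraph, is purely expository and should be spelled out so that the reader does not expect the hypothesis' set $A$ to serve verbatim.
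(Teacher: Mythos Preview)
Your proof is correct, but it takes a genuinely different route from the paper's. You read the conclusion as an existence statement and simply rebuild a generator for $\lambda$ from scratch inside $A_{S_r}$: since $A_{S_r}$ is a basis of $S_r^\perp$ and $\lambda\in S_r^\perp\setminus\{0\}$, this is immediate, and---as you rightly note---most of the hypothesis concerning the original $A$ plays no role.

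The paper instead keeps the \emph{same} $A$ from the hypothesis and argues by contradiction: if $A$ were not contained in any single $A_{S_{r'}}$, then (after a WLOG reduction to two pieces) one would have $\mathcal{G}(A)\subseteq\mathrm{span}(S_{r_1}^\perp\cup S_{r_2}^\perp)$ while $\mathcal{G}(A)\nsubseteq S_{r_1}^\perp$ and $\mathcal{G}(A)\nsubseteq S_{r_2}^\perp$, contradicting the hypothesis $\mathcal{G}(A)\subseteq\bigcup_{r\in I(y)}S_r^\perp$ via the fact that a subspace contained in a finite union of subspaces must lie in one of them. Your cautionary remark is apt: this argument only places $A$ inside $A_{S_{r'}}$ for \emph{some} active index $r'$, which need not be the particular $r$ named in the statement when $\lambda$ happens to lie in several $S_r^\perp$. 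For the application in Theorem~\ref{thm-3.16} either conclusion suffices, but your direct construction matches the stated conclusion for the given $r$ and is more elementary; the paper's approach has the virtue of retaining the original generator set (useful when one wants the RCPLD and piecewise-RCPLD families of $A_i$ to literally coincide), at the cost of possibly shifting the index.
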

\begin{proof}
Suppose on the contrary that  (\ref{eqn-3.40}) does not hold. Without loss of generality, we assume there are $r_{1},r_{2}\in I(y)$ such that
\begin{equation*}
    A\subseteq A_{S_{r_{1}}} \cup A_{S_{r_{2}}}\:\:\textrm{but}\:\:A\nsubseteq A_{S_{r_{1}}}\:\:\textrm{and}\:\:A\nsubseteq A_{S_{r_{2}}}.
\end{equation*}
Since both $S_{r_{1}}$ and $S_{r_{2}}$ are subspaces, it follows that 
\begin{equation*}
    \mathcal{G}(A)\subseteq \textrm{span}\{S_{r_{1}}^{\perp} \cup S_{r_{2}}^{\perp} \}\:\:\textrm{but}\:\:\mathcal{G}(A)\nsubseteq S_{r_{1}}^{\perp}\:\:\textrm{and}\:\:\mathcal{G}(A)\nsubseteq S_{r_{2}}^{\perp},
\end{equation*}
which contradicts the fact that
\begin{equation*}
    \mathcal{G}(A) \subseteq N_{S}(y) = \bigcup\limits_{r\in I(y)} S_{r}^{\perp}.
\end{equation*}
In this way, we complete the proof.
\end{proof}

\begin{theorem}\label{thm-3.16}
    For  \emph{(MPDSC)},  RCPLD coincides with the piecewise RCPLD.
\end{theorem}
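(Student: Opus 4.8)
\textbf{Proof proposal for Theorem \ref{thm-3.16}.}

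The plan is to show the two implications separately, using the structural equalities \eqref{eqn-3.46} and \eqref{eqn-3.47} together with Lemma \ref{lem-3.2} to translate between the ``disjunctive'' multiplier $\lambda^\Phi_i\in\bigcup_{r\in I(\Phi_i(x^\ast))}S_r^\perp$ appearing in MPDSC-RCPLD and the ``piecewise'' multipliers $\lambda^\Phi_i\in S_r^\perp$ for $i\in P_r$ appearing in the partition subsystems \eqref{pro-3.41}. Condition (i) (constant rank of $\{\nabla h_i(x)\}$ near $x^\ast$) is identical in both definitions, so the whole argument concerns condition (ii).

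For the direction ``piecewise RCPLD $\Rightarrow$ RCPLD'': suppose MPDSC-PRCPLD holds at $x^\ast$ and take any nonzero $(\lambda^g,\lambda^h,\lambda^\Phi)$ with $\lambda^g_i\ge 0$ for $i\in I$ and $\lambda^\Phi_i\in\bigcup_{r\in I(\Phi_i(x^\ast))}S_r^\perp$ satisfying \eqref{MPDSC-WLICQnew}, together with linearly independent sets $A_i$ satisfying \eqref{generatorset}. Since each $\lambda^\Phi_i$ lies in some $S_{r(i)}^\perp$ with $r(i)\in I(\Phi_i(x^\ast))$ (choosing $r(i)$ arbitrarily when $\lambda^\Phi_i=0$, e.g.\ any index in $I(\Phi_i(x^\ast))$), define a partition $P=\{P_1,\dots,P_R\}$ by putting $i$ into $P_{r(i)}$; by construction $x^\ast\in\mathcal F_P$. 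By Lemma \ref{lem-3.2} applied to $y=\Phi_i(x^\ast)$, whenever $\lambda^\Phi_i\ne 0$ the generator set $A_i\subseteq A_S(\Phi_i(x^\ast))=\bigcup_{r\in I(\Phi_i(x^\ast))}A_{S_r}$ can be taken inside $A_{S_{r(i)}}$ with $\lambda^\Phi_i\in\mathcal G(A_i)\subseteq S_{r(i)}^\perp$; set $A^{r(i)}_i:=A_i$ and $A^r_i:=\emptyset$ otherwise, so \eqref{eqn-3.44} holds. Then \eqref{MPDSC-WLICQnew} is exactly \eqref{eqn-3.42} for this partition, and the families \eqref{eqn-3.38} and \eqref{eqn-3.43} coincide; MPDSC-RCPLD for subsystem \eqref{pro-3.41} gives the required linear dependence along any $x^k\to x^\ast$, $x^k\ne x^\ast$, which is precisely the conclusion of MPDSC-RCPLD at $x^\ast$.

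For the converse ``RCPLD $\Rightarrow$ piecewise RCPLD'': fix any partition $P$ with $x^\ast\in\mathcal F_P$ and any nonzero $(\lambda^g,\lambda^h,\lambda^\Phi)$ and linearly independent sets $A^r_i$ satisfying the hypotheses of Definition \ref{def-3.14}(ii); here $\lambda^\Phi_i\in S_r^\perp$ and $A^r_i\subseteq A_{S_r}$ for $i\in P_r$. Since $i\in P_r$ forces $\Phi_i(x^\ast)\in S_r$, we have $r\in I(\Phi_i(x^\ast))$, hence $S_r^\perp\subseteq\bigcup_{\rho\in I(\Phi_i(x^\ast))}S_\rho^\perp$ and $A_{S_r}\subseteq A_S(\Phi_i(x^\ast))$ by \eqref{eqn-3.47}; setting $A_i:=A^r_i$, conditions \eqref{generatorset} are met and \eqref{eqn-3.42} becomes \eqref{MPDSC-WLICQnew}. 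Again the vector families \eqref{eqn-3.43} and \eqref{eqn-3.38} agree, so MPDSC-RCPLD at $x^\ast$ delivers the linear dependence required by MPDSC-PRCPLD for this partition. Since $P$ was arbitrary, MPDSC-PRCPLD holds.

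The main obstacle is the first (``piecewise $\Rightarrow$ RCPLD'') direction, specifically the step where a single generator set $A_i\subseteq\bigcup_{r}A_{S_r}$ attached to $\lambda^\Phi_i$ must be replaced by one living inside a \emph{single} $A_{S_{r(i)}}$ compatible with the chosen partition; this is exactly the content of Lemma \ref{lem-3.2}, and it is where the subspace structure (rather than general polyhedral structure) is essential, since for general MPDC a generator may genuinely straddle two pieces. One should also be mildly careful about the bookkeeping when $\lambda^\Phi_i=0$: such indices carry $A_i=\emptyset$ and can be assigned to any $P_r$ with $r\in I(\Phi_i(x^\ast))$ without affecting either \eqref{eqn-3.42} or the vector family, so the choice of partition is well-defined; and one must verify $I\subseteq\mathcal I_g(x^\ast)$ and the basis set $J$ are transported unchanged, which is immediate. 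The remaining identifications between \eqref{eqn-3.38}/\eqref{eqn-3.43} and between \eqref{MPDSC-WLICQnew}/\eqref{eqn-3.42} are purely notational once the partition and the $A^r_i$ are fixed.
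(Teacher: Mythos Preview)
Your proposal is correct and follows essentially the same approach as the paper: both arguments observe that condition (i) and the multiplier equation are literally identical in the two definitions, and then invoke Lemma \ref{lem-3.2} (together with the structural identities \eqref{eqn-3.46}--\eqref{eqn-3.47}) to match the generator sets $A_i$ of MPDSC-RCPLD with the piecewise generator sets $A^r_i$. Your write-up is considerably more explicit than the paper's very terse proof, carefully separating the two implications and handling the bookkeeping for $\lambda^\Phi_i=0$ and the construction of the partition $P$; the only minor point to watch is that the index $r(i)$ should in principle be read off from the given $A_i$ (via Lemma \ref{lem-3.2}, which forces $A_i\subseteq A_{S_{r'}}$ for some $r'$ and hence $\lambda^\Phi_i\in S_{r'}^\perp$) rather than from $\lambda^\Phi_i$ alone, but this does not affect the validity of the argument.
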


\begin{proof}
Without loss of generality assume that the number of subspaces are more than $2$. Condition (i) is identical for both RCPLD and the piecewise RCPLD. In condition (ii), (\ref{MPDSC-WLICQnew}) is also identical to (\ref{eqn-3.42}). Hence (\ref{MPDSC-WLICQnew}) holds for  $\lambda = (\lambda^g, \lambda^h, \lambda^{\Phi})$ with $\lambda^{\Phi}_{i}\in  \bigcup\limits_{r\in I(\Phi_{i}(x^{\ast}))}S_{r}^{\perp}$
if and only if (\ref{eqn-3.42}) for the same $\lambda$ when $\lambda^{\Phi}_{i}\in  S_{r}^{\perp}, r\in I(\Phi_{i}(x^{\ast}))$. The rest of the proof follows from applying Lemma \ref{lem-3.2}.

\end{proof}

The discussions on other constraint qualifications are similar as that of RCPLD, hence we have the following corollary.

\begin{corollary}\label{col-3.1}
    For mathematical programs with disjunctive subspaces constraints \emph{(MPDSC)}, their constraint qualifications such as CRCQ, RCRCQ, CPLD, ERCPLD, and RCPLD coincide with their piecewise versions, respectively.
\end{corollary}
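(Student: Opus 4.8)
The plan is to run, for each of the five listed constraint qualifications, the same argument used to prove Theorem \ref{thm-3.16}, since the piecewise version of a CQ differs from the non-piecewise one only in the way the multipliers $\lambda_i^\Phi$ and the accompanying generator sets $A_i$ are allowed to range. The pivotal ingredient, already exploited in Theorem \ref{thm-3.16}, is a dictionary between the non-piecewise data and the piecewise data. On the one hand, a multiplier with $\lambda_i^\Phi \in \bigcup_{r\in I(\Phi_i(x^{\ast}))} S_r^\perp$ selects, for each $i$ with $\lambda_i^\Phi \neq 0$, an index $r_i \in I(\Phi_i(x^{\ast}))$ such that $\lambda_i^\Phi \in S_{r_i}^\perp$; assigning $i$ to $P_{r_i}$, and assigning the indices $i$ with $\lambda_i^\Phi = 0$ arbitrarily (which is harmless, since then $A_i = \emptyset$), produces a partition $P = \{P_1,\dots,P_R\}$ with $x^{\ast}\in \mathcal{F}_P$, because feasibility of $x^{\ast}$ guarantees $I(\Phi_i(x^{\ast}))\neq\emptyset$. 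Conversely, for any partition $P$ with $x^{\ast}\in\mathcal{F}_P$, every multiplier with $\lambda_i^\Phi \in S_r^\perp$ for $i\in P_r$ satisfies $\lambda_i^\Phi \in \bigcup_{r\in I(\Phi_i(x^{\ast}))} S_r^\perp$.

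On the other hand, the generator sets match up via Lemmas \ref{lem-3.1} and \ref{lem-3.2}. In the non-piecewise CQ one takes linearly independent $A_i \subseteq A_S(\Phi_i(x^{\ast}))$ with $\lambda_i^\Phi \in \mathcal{G}(A_i) \subseteq \bigcup_{r\in I(\Phi_i(x^{\ast}))} S_r^\perp$; by Lemma \ref{lem-3.1}, $A_S(\Phi_i(x^{\ast})) = \bigcup_{r\in I(\Phi_i(x^{\ast}))} A_{S_r}$, and by Lemma \ref{lem-3.2}, since $\lambda_i^\Phi \in S_{r_i}^\perp$, necessarily $A_i \subseteq A_{S_{r_i}}$. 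Conversely, any linearly independent $A_i \subseteq A_{S_{r_i}}$ with $\lambda_i^\Phi \in \mathcal{G}(A_i)$ is admissible in the non-piecewise CQ. Hence the family of vectors $\{\nabla\Phi_i(x^k)^T\beta_i\}$ appearing in the linear-dependence / constant-rank conclusions is exactly the family $\{\nabla\Phi_i(x^k)^T\beta_i^r\}$ of the subsystem (\ref{pro-3.41}) for the partition $P$, while all the remaining data (the index sets $I$, $J$, $L$, the $g$- and $h$-gradients, and the quantifiers ``for all sequences $x^k\to x^{\ast}$'' and ``for $k$ sufficiently large'') are literally the same.

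Granting this dictionary, each equivalence is immediate. For RCPLD it is Theorem \ref{thm-3.16}; for CPLD one only replaces ``$J$ a basis of $\mathrm{span}\{\nabla h_i(x^{\ast})\}$'' by ``$J$ arbitrary'' on both sides, which does not interact with the $\Phi$-part of the dictionary; for CRCQ and RCRCQ there is no balancing multiplier equation, but the same correspondence applies to the free choice of $\lambda_i^\Phi \in \bigcup_{r} S_r^\perp$ and of the admissible $A_i$, and the ``same rank'' conclusion is unchanged; for ERCPLD one additionally observes that positive linear dependence of the displayed (bracketed) family is preserved under the dictionary, since the grouping only separates the $g$-block from the $(h,\Phi)$-block and is respected by the partition. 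I would then state that carrying out these substitutions verbatim, exactly as in the proof of Theorem \ref{thm-3.16}, establishes the corollary.

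The only point that needs genuine care — and hence the main obstacle — is verifying that the dictionary respects the differing quantifier structures of the five CQs: CRCQ/RCRCQ quantify over all $\lambda_i^\Phi$ with no balancing equation and over two index triples, ERCPLD quantifies over a positive-linear-dependence hypothesis with a bracketed grouping, whereas (R)CPLD has a genuine nonzero multiplier equation. Lemma \ref{lem-3.2} is precisely what makes the generator-set half of the dictionary go through uniformly in all these cases; once one is satisfied that it applies in each, the remainder is bookkeeping identical to the proof of Theorem \ref{thm-3.16}.
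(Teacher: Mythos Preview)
Your proposal is correct and takes essentially the same approach as the paper: the paper simply states that ``the discussions on other constraint qualifications are similar as that of RCPLD'' and offers no further argument, so your detailed elaboration via the multiplier/generator-set dictionary built on Lemmas \ref{lem-3.1} and \ref{lem-3.2} is exactly the intended route.
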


Based on Theorem \ref{thm-3.16}, now we show that for MPDSC the constraint qualification RCPLD implies error bounds for MPDSC.

%\begin{lemma}\label{lem-3.3}
%    \emph{\cite[Theorem 4.2]{Xu-2021}} Suppose that MPDSC-piecewise RCPLD holds at $x^{\ast}$ which is feasible for problem \emph{(\ref{MPDSC})}. Then, the error bound property holds at $x^{\ast}$.
%\end{lemma}

\begin{theorem}\label{thm-3.17}
    Suppose that MPDSC-RCPLD holds at $x^{\ast}$ which is feasible for problem \emph{(\ref{MPDSC})}. Then, the error bound property holds at $x^{\ast}$.
\end{theorem}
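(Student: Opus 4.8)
The plan is to derive the result by combining Theorem~\ref{thm-3.16} with the error bound theorem for MPDCs of Xu and Ye \cite[Theorem 4.2]{Xu-2021}. By Theorem~\ref{thm-3.16}, MPDSC-RCPLD at $x^{\ast}$ is equivalent to MPDSC-piecewise RCPLD at $x^{\ast}$, so it suffices to show that MPDSC-piecewise RCPLD implies the error bound property. Since each $S_{r}$ is, in particular, a convex polyhedral set, problem \eqref{MPDSC} is a special instance of an MPDC, and the MPDSC-piecewise RCPLD of Definition~\ref{def-3.14} is precisely the MPDC-piecewise RCPLD of \cite[Definition 4.1]{Xu-2021} evaluated for this instance; here one reads off the abstract normal cone and generator set data from the concrete ones via the identities \eqref{eqn-3.46}--\eqref{eqn-3.47} of Proposition~\ref{thm-3.1} and Lemma~\ref{lem-3.1}, matching $N_{S}(\Phi_{i}(x^{\ast}))$ with $\bigcup_{r\in I(\Phi_{i}(x^{\ast}))}S_{r}^{\perp}$ and $A_{S}(\Phi_{i}(x^{\ast}))$ with $\bigcup_{r\in I(\Phi_{i}(x^{\ast}))}A_{S_{r}}$. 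Then \cite[Theorem 4.2]{Xu-2021} applies and yields the error bound at $x^{\ast}$.

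For completeness, and to make the role of the subsystems transparent, the same conclusion can be reached directly. Since $\mathcal{F}_{P}\subseteq\mathcal{F}$ for every partition $P=\{P_{1},\dots,P_{R}\}$ of $\{1,\dots,l\}$, we always have $d_{\mathcal{F}}(x)\le d_{\mathcal{F}_{P}}(x)$, so it suffices, for each $x$ near $x^{\ast}$, to exhibit a partition $P(x)$ with $x^{\ast}\in\mathcal{F}_{P(x)}$ for which $d_{\mathcal{F}_{P(x)}}(x)$ is controlled by the MPDSC residual. For any partition $P$ with $x^{\ast}\in\mathcal{F}_{P}$, the subsystem \eqref{pro-3.41} is an ordinary system of finitely many smooth equalities and inequalities, the constraint $\Phi_{i}(x)\in S_{r}$ being the finite system of equalities $\langle a_{j}^{r},\Phi_{i}(x)\rangle=0$, $j\in\mathcal{E}_{r}$; for such a system MPDSC-RCPLD for \eqref{pro-3.41} --- which is part of MPDSC-piecewise RCPLD --- reduces to the classical RCPLD of nonlinear programming and hence yields a local error bound for $\mathcal{F}_{P}$ at $x^{\ast}$. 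Because $\{a_{j}^{r}\}_{j\in\mathcal{E}_{r}}$ is a basis of $S_{r}^{\perp}$, the quantities $d_{S_{r}}(y)$ and $\sum_{j\in\mathcal{E}_{r}}|\langle a_{j}^{r},y\rangle|$ are equivalent up to positive multiplicative constants, so the residual of \eqref{pro-3.41} is, up to a positive factor, $\sum_{i=1}^{m}\max\{g_{i}(x),0\}+\sum_{i=1}^{p}|h_{i}(x)|+\sum_{r=1}^{R}\sum_{i\in P_{r}}d_{S_{r}}(\Phi_{i}(x))$.

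To finish the direct route, given $x$ near $x^{\ast}$ choose for each $i$ an index $r_{i}$ with $d_{S_{r_{i}}}(\Phi_{i}(x))=d_{S}(\Phi_{i}(x))$; after shrinking the neighbourhood, $\Phi_{i}(x)$ is close enough to $\Phi_{i}(x^{\ast})$ that necessarily $r_{i}\in I(\Phi_{i}(x^{\ast}))$, so the partition $P(x)$ that assigns $i$ to $P_{r_{i}}$ satisfies $x^{\ast}\in\mathcal{F}_{P(x)}$. Combining $d_{\mathcal{F}}(x)\le d_{\mathcal{F}_{P(x)}}(x)$ with the error bound for $\mathcal{F}_{P(x)}$ and the residual comparison above gives $d_{\mathcal{F}}(x)\le\kappa_{P(x)}\bigl(\sum_{i=1}^{m}\max\{g_{i}(x),0\}+\sum_{i=1}^{p}|h_{i}(x)|+\sum_{i=1}^{l}d_{S}(\Phi_{i}(x))\bigr)$, and replacing $\kappa_{P(x)}$ by the maximum of the finitely many constants $\kappa_{P}$ over partitions $P$ with $x^{\ast}\in\mathcal{F}_{P}$ yields the error bound at $x^{\ast}$. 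I expect the only delicate points to be organizational: verifying that Definition~\ref{def-3.14} really is the specialization of the Xu and Ye notion (which is exactly what \eqref{eqn-3.46}--\eqref{eqn-3.47} and Lemma~\ref{lem-3.2} were prepared for) and, in the direct argument, the localization of the nearest-subspace partition together with the equivalence of the two residuals; there is no essential analytic obstacle here, the substance being carried by Theorem~\ref{thm-3.16} and \cite[Theorem 4.2]{Xu-2021}.
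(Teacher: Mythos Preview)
Your first paragraph is exactly the paper's proof: combine Theorem~\ref{thm-3.16} (MPDSC-RCPLD $\Leftrightarrow$ MPDSC-piecewise RCPLD) with \cite[Theorem~4.2]{Xu-2021} to conclude the error bound. The remaining ``direct route'' you sketch is not in the paper; it is essentially a re-derivation of \cite[Theorem~4.2]{Xu-2021} in this concrete setting, and while it is sound (the localization of the nearest-subspace partition and the residual equivalence are both correct as stated), it is redundant once you are willing to quote that reference.
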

\begin{proof}
The proof is rather straightforward by combining Theorem \ref{thm-3.16} and \cite[Theorem 4.2]{Xu-2021}.
%Lemma \ref{lem-3.3}. 
The constraint qualification MPDSC-RCPLD implies the error bound property since MPDSC-RCPLD coincides with MPDSC-piecewise RCPLD.
\end{proof}

We conclude Section \ref{section-3} with Figure \ref{Figure-3.1} which summarizes the relations among various constraint qualifications for MPDSC.

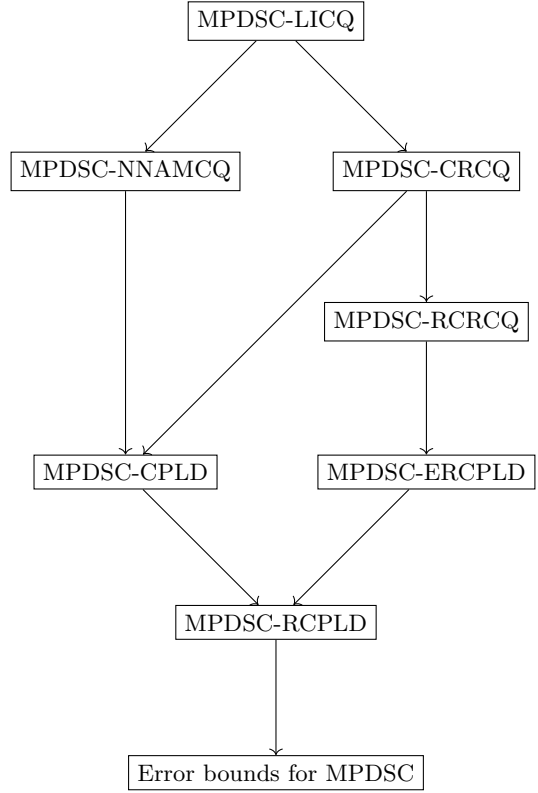
\begin{figure}[t]
    \centering
    \begin{tikzpicture}[scale=2]
        \node[draw = black] (1) at (4,-1) {\small Error bounds for MPDSC};
        \node[draw = black] (2) at (4,0) {\small MPDSC-RCPLD};
        \node[draw = black] (3) at (3,1) {\small MPDSC-CPLD};
        \node[draw = black] (4) at (5,2) {\small MPDSC-RCRCQ};
        \node[draw = black] (5) at (3,3) {\small MPDSC-NNAMCQ};
        \node[draw = black] (6) at (5,3) {\small MPDSC-CRCQ};
        \node[draw = black] (7) at (4,4) {\small MPDSC-LICQ};
        \node[draw = black] (8) at (5,1) {\small MPDSC-ERCPLD};
        \draw[->] (7) -- (5);
        \draw[->] (7) -- (6);
        \draw[->] (5) -- (3);
        \draw[->] (6) -- (3);
        \draw[->] (6) -- (4);
        \draw[->] (3) -- (2);
        \draw[->] (4) -- (8);
        \draw[->] (8) -- (2);
        \draw[->] (2) -- (1);
    \end{tikzpicture}
    \caption{\small{Relations among various constraint qualifications for MPDSC}}
    \label{Figure-3.1}
\end{figure}

% Section 4: New optimality conditions for CCOP from MPDSC
\section{Optimality conditions and sufficient condition for error bounds for CCOP}\label{section-4}
In this section we apply main results  obtained for MPDSC to CCOP. Recall that $\mathcal{I}_{s} := \{\mathcal{I}\subseteq \{1,2,\dots,n\}\:\vert\:|\mathcal{I}|=s\}$ and $\mathbb{R}_{\mathcal{I}} := \textrm{span} \{e_{i}\:\vert\:i\in \mathcal{I}\}$. In this section $S:= \bigcup\limits_{\mathcal{I} \in \mathcal{I}_{s}} \mathbb{R}_{\mathcal{I}}$ and  use the notation $I_{\pm}(x) := \{ i\in \{1,\dots,n\} \:\vert\: x_{i}\neq 0\}$ and $I_0(x):= \{ i\in \{1,\dots,n\} \:\vert\: x_{i}=0\}$  for a given vector $x\in \mathbb{R}^{n}$.
The  following proposition   is immediate from Propositions \ref{thm-3.1}-\ref{thm-3.4} taking into account that
$r\in I(x^*) $ here means that $x^*\in \mathbb{R}        _{\cal I}$ and $I_{\pm}(x^{\ast})\subseteq {\cal I}\in {\cal I}_s$.

\begin{proposition}\label{thm-4.1}
Let $x^{\ast}$
% be a feasible point of program \emph{(\ref{pro-4.1})} 
and $d\in \mathbb{R}^n$. Then, we have the expressions of various tangent and normal cones  in \emph{Table \ref{Table-4.1}}.
\end{proposition}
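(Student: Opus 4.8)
The plan is to specialize Propositions \ref{thm-3.1}, \ref{thm-3.5}, and \ref{thm-3.4} to the concrete disjunctive set $S = \bigcup_{\mathcal{I}\in\mathcal{I}_s}\mathbb{R}_{\mathcal{I}}$ and to translate the abstract active-index notation $I(x^*)$, $I(d)$, and the orthogonal complements $S_r^\perp$ into the coordinate language of $I_\pm(\cdot)$ and $I_0(\cdot)$. First I would make the dictionary explicit: since $\mathbb{R}_{\mathcal{I}} = \mathrm{span}\{e_i : i\in\mathcal{I}\}$, a point $x^*$ lies in $\mathbb{R}_{\mathcal{I}}$ precisely when $I_\pm(x^*)\subseteq\mathcal{I}$, so the active index set becomes $I(x^*) = \{\mathcal{I}\in\mathcal{I}_s : I_\pm(x^*)\subseteq\mathcal{I}\}$; likewise $\mathbb{R}_{\mathcal{I}}^\perp = \mathrm{span}\{e_i : i\notin\mathcal{I}\}$. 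Plugging these into \eqref{eqn-3.4}, \eqref{eqn-3.5}, \eqref{eqn-3.6} gives $T_S(x^*) = \bigcup\{\mathbb{R}_{\mathcal{I}} : I_\pm(x^*)\subseteq\mathcal{I}\}$, $\hat N_S(x^*) = \bigcap\{\mathbb{R}_{\mathcal{I}}^\perp : I_\pm(x^*)\subseteq\mathcal{I}\}$, and $N_S(x^*) = \bigcup\{\mathbb{R}_{\mathcal{I}}^\perp : I_\pm(x^*)\subseteq\mathcal{I}\}$.

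Next I would simplify the intersection formula for $\hat N_S(x^*)$: a vector $v$ lies in every $\mathbb{R}_{\mathcal{I}}^\perp$ with $I_\pm(x^*)\subseteq\mathcal{I}$ iff $v_i = 0$ for every $i$ that belongs to at least one such $\mathcal{I}$; since $|I_\pm(x^*)|\le s$ and $s<n$, every index $i\notin I_\pm(x^*)$ can be completed to some $\mathcal{I}\in\mathcal{I}_s$ containing $I_\pm(x^*)$, so $v_i=0$ is forced for all $i\notin I_\pm(x^*)$, i.e. $\hat N_S(x^*) = \mathbb{R}_{I_\pm(x^*)}^\perp = \{v : v_i = 0\ \forall i\in I_\pm(x^*)\}$ — equivalently, when $|I_\pm(x^*)|<s$ the regular normal cone collapses to this, and when $|I_\pm(x^*)| = s$ it equals $\mathbb{R}_{\mathcal{I}}^\perp$ for the unique active $\mathcal{I}$. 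For the limiting normal cone no such collapse occurs and the union stays as is. Then for the directional objects I would invoke Propositions \ref{thm-3.5} and \ref{thm-3.4}, noting that $d\in T_S(x^*)$ means $d\in\mathbb{R}_{\mathcal{I}}$ for some active $\mathcal{I}$, and the relevant index set $I(x^*)\cap I(d)$ becomes $\{\mathcal{I}\in\mathcal{I}_s : I_\pm(x^*)\cup I_\pm(d)\subseteq\mathcal{I}\}$, giving $\hat N_{T_S(x^*)}(d) = \bigcap\{\mathbb{R}_{\mathcal{I}}^\perp : I_\pm(x^*)\cup I_\pm(d)\subseteq\mathcal{I}\}$ (which collapses, by the same completion argument, to $\mathbb{R}_{I_\pm(x^*)\cup I_\pm(d)}^\perp$ provided $|I_\pm(x^*)\cup I_\pm(d)|\le s$, a condition automatic from $d\in T_S(x^*)$) and $N_S(x^*;d) = \bigcup\{\mathbb{R}_{\mathcal{I}}^\perp : I_\pm(x^*)\cup I_\pm(d)\subseteq\mathcal{I}\}$.

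The remaining work is purely presentational: assemble these into the rows of Table \ref{Table-4.1}, being careful to state the case distinction ($|I_\pm(x^*)| = s$ versus $< s$) cleanly and to record both the "union/intersection over $\mathcal{I}_s$" form and the simplified coordinate form. I do not expect a genuine mathematical obstacle here, since everything follows by direct substitution into already-proved formulas; the only subtlety — and the step I would be most careful about — is verifying the collapse of the intersections (for $\hat N_S$ and $\hat N_{T_S}$), which relies essentially on $s<n$ and on the counting fact that any index set of size $\le s$ extends to a member of $\mathcal{I}_s$, and making sure the directional case correctly uses $I_\pm(x^*)\cup I_\pm(d)$ rather than, say, $I_\pm(x^*+d)$ (these differ when cancellation occurs, but $d\in T_S(x^*)$ rules that out since $x^*$ and $d$ share a common supporting subspace $\mathbb{R}_{\mathcal{I}}$). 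I would close by remarking that these formulas recover, as the special case $X=\mathbb{R}^n$, the normal-cone expressions for $S$ obtained in \cite{pan2015, Bauschke2014, pan2017optimality}.
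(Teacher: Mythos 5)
Your route is the paper's route: the proposition is obtained by specializing Propositions \ref{thm-3.1}, \ref{thm-3.5} and \ref{thm-3.4} via the dictionary $r\in I(x^{\ast})\Longleftrightarrow I_{\pm}(x^{\ast})\subseteq\mathcal{I}\in\mathcal{I}_{s}$, $\mathbb{R}_{\mathcal{I}}^{\perp}=\mathrm{span}\{e_{i}\,\vert\, i\notin\mathcal{I}\}$, and your rows for $T_{S}(x^{\ast})$, $N_{S}(x^{\ast})$ and $N_{S}(x^{\ast};d)$ are correct. However, your collapse of the intersection in the two regular-normal-cone rows is wrong in the case $|I_{\pm}(x^{\ast})|<s$. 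If $v\in\bigcap\{\mathbb{R}_{\mathcal{I}}^{\perp}\,\vert\, I_{\pm}(x^{\ast})\subseteq\mathcal{I}\in\mathcal{I}_{s}\}$, then $v_{i}=0$ for every $i$ lying in some active $\mathcal{I}$; every active $\mathcal{I}$ contains $I_{\pm}(x^{\ast})$, so the coordinates indexed by $I_{\pm}(x^{\ast})$ are forced to vanish, and your completion argument (which requires $|I_{\pm}(x^{\ast})|<s$, not merely $\leq s$) forces the remaining coordinates to vanish as well; hence $v=0$ and $\hat{N}_{S}(x^{\ast})=\{0\}$, which is what Table \ref{Table-4.1} records. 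Your stated conclusion $\hat{N}_{S}(x^{\ast})=\mathbb{R}_{I_{\pm}(x^{\ast})}^{\perp}=\{v\,\vert\, v_{i}=0\ \forall i\in I_{\pm}(x^{\ast})\}$ is a different, strictly larger subspace; it is the correct value only when $|I_{\pm}(x^{\ast})|=s$, where $I_{\pm}(x^{\ast})$ is the unique active index set. Equivalently, $\bigcap_{\mathcal{I}\ \mathrm{active}}\mathbb{R}_{\mathcal{I}}^{\perp}=\bigl(\sum_{\mathcal{I}\ \mathrm{active}}\mathbb{R}_{\mathcal{I}}\bigr)^{\perp}$, and when $|I_{\pm}(x^{\ast})|<s$ the active subspaces together span all of $\mathbb{R}^{n}$.

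The same slip occurs for $\hat{N}_{T_{S}(x^{\ast})}(d)$: you claim it equals $\mathbb{R}_{I_{\pm}(x^{\ast})\cup I_{\pm}(d)}^{\perp}$ whenever $|I_{\pm}(x^{\ast})\cup I_{\pm}(d)|\leq s$, whereas it is $\{0\}$ when this cardinality is strictly less than $s$ and equals $\mathbb{R}_{I_{\pm}(x^{\ast})\cup I_{\pm}(d)}^{\perp}$ only when it equals $s$. This is not cosmetic: the dichotomy $\{0\}$ versus $\mathbb{R}_{I_{\pm}(x^{\ast})\cup I_{\pm}(d)}^{\perp}$ is precisely what yields case (b) of the S-stationarity condition in Definition \ref{def-4.0}, where the multiplier $\eta$ must vanish entirely when $|I_{\pm}(x^{\ast})\cup I_{\pm}(d)|<s$. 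Once these two entries are corrected (your own completion argument already contains the needed fact), your proof coincides with the paper's, which treats the proposition as immediate from Propositions \ref{thm-3.1}--\ref{thm-3.4} under the stated identification of active indices.
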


\begin{center}
    \renewcommand\arraystretch{1.6}
    \begin{tabular}{|l|c|r|}
    \hline\hline
        Cones & \footnotesize{ $|I_{\pm}(x^{\ast})|<s$\:(or $|I_{\pm}(x^{\ast})\cup I_{\pm}(d)|<s$)} & \footnotesize{$|I_{\pm}(x^{\ast})|=s$\:(or $|I_{\pm}(x^{\ast})\cup I_{\pm}(d)|=s$)} \\ \hline\hline
        $T_{S}(x^{\ast})$ & $\bigcup\limits_{\scriptsize{I_{\pm}(x^{\ast})\subseteq \mathcal{I} \in \mathcal{I}_{s}}} \mathbb{R}_{\mathcal{I}}$ & $\mathbb{R}_{\scriptsize{I_{\pm}(x^{\ast})}}$ \quad\quad\quad\quad\quad \\
        $\hat{N}_{S}(x^{\ast})$ & $\{0\}$ & $\mathbb{R}_{\scriptsize{I_{\pm}(x^{\ast})}}^{\perp}$\quad\quad\quad\quad\quad \\
        $N_{S}(x^{\ast})$ & $\bigcup\limits_{\scriptsize{I_{\pm}(x^{\ast})\subseteq \mathcal{I} \in \mathcal{I}_{s}}} \mathbb{R}_{\mathcal{I}}^{\perp}$ & $\mathbb{R}_{\scriptsize{I_{\pm}(x^{\ast})}}^{\perp}$\quad\quad\quad\quad\quad \\
        $N_{S}(x^{\ast};d)$ & $\bigcup\limits_{\scriptsize{I_{\pm}(x^{\ast})\cup I_{\pm}(d) \subseteq \mathcal{I} \in \mathcal{I}_{s}}} \mathbb{R}_{\mathcal{I}}^{\perp}$ & $\mathbb{R}_{\scriptsize{I_{\pm}(x^{\ast})\cup I_{\pm}(d)}}^{\perp}$\quad\quad\quad\quad \\
        $\hat{N}_{T_{S}(x^{\ast})}(d)$ & $\{0\}$ & $\mathbb{R}_{\scriptsize{I_{\pm}(x^{\ast})\cup I_{\pm}(d)}}^{\perp}$\quad\quad\quad\quad \\ 
    \hline\hline
    \end{tabular}
    \captionof{table}{\small{Various cones to set $S$ in CCOP}}
    \label{Table-4.1}
\end{center}

Now let us write down the  constraint qualifications for CCOP from the ones for MPDSC in Definition \ref{def-3.12}.
In order to do that we need to specify the condition (\ref{generatorset}) to CCOP. For problem CCOP written in the form (\ref{pro-4.1}), $l=1$,  $\Phi_i(x)=x$, $S_r=\mathbb{R}_{\cal I}$ and $\lambda^\Phi_i=\eta$. %Denote by $\eta$. 
Moreover, $r\in I(\Phi_i(x^*)) $ here means that $x^*\in S_{\cal I}$ and $I_{\pm}(x^{\ast})\subseteq {\cal I}\in {\cal I}_s$.
 Hence the condition (\ref{generatorset}) is
  \begin{equation*}
       \begin{array}{ll}
       \mbox{ if }   \eta \not =0, &  \mbox{ then }
        A \subseteq A_{S}(x^{\ast}), \quad  \eta \in \mathcal{G}(A) \subseteq \bigcup\limits_{I_{\pm}(x^{\ast})\subseteq {\cal I} \in {\cal I}_s} {\mathbb{R}_{\cal I}}^{\perp};\\
     \mbox{ if }  \eta = 0,   &  \mbox{ then } A = \emptyset.
     \end{array}
        \end{equation*} 
 %Let $x^{\ast}$ be feasible for CCOP (\ref{pro-1.1}). 
 %Take CCOP-RCPLD for example, by Lemma \ref{lem-3.2} for $\lambda^{I}\in N_{S}(x^{\ast})$ and the generator $A$, there exists $I\in \mathcal{I}_{s}$ such that
%\begin{equation*}
%    A\subseteq A_{\mathbb{R}_{I}}(x^{\ast}), 0\neq \lambda^{I}\in \mathcal{G}(A) \subseteq \mathcal{G}(A_{\mathbb{R}_{I}}(x^{\ast})).
%\end{equation*}
Since for  $I_{\pm}(x^{\ast})\subseteq {\cal I}\in \mathcal{I}_{s}$, we have $N_{\mathbb{R}_{\cal I}}(x^{\ast}) = \mathbb{R}_{\cal I}^{\perp}$, 
\begin{equation*}
    A_{\mathbb{R}_{\cal I}} = \{e_{i} \:\vert\: i\in K\:\:\textrm{such that}\:\:K\subseteq I_{0}(x^{\ast})\:\:\textrm{and}\:\:|K|\leq n-s\}.
\end{equation*}
Hence by (\ref{Lemmaeqn}),
$$A_{S}(x^{\ast})=\bigcup\limits_{I_{\pm}(x^{\ast})\subseteq {\cal I} \in {\cal I}_s} A_{\mathbb{R}_{\cal I}}$$
and so in the case where $0\neq \eta\in \mathcal{G}(A)$, by Lemma \ref{lem-3.2} we can take the generator set $A$ above as
\begin{equation*}
    A = \{e_{i} \:\vert\: i\in K\:\:\textrm{such that}\:\:I_{\pm}(\eta)\subseteq K\subseteq I_{0}(x^{\ast})\:\:\textrm{and}\:\:|K|\leq n-s\}.
\end{equation*}

Therefore, we obtain the following definition. Note that the constraint qualification CCOP-LICQ is the same as the one proposed in \cite[Section 5.3]{Mehl-2020-O}.

%Given   finite index sets $I, J$,  a pair  $(\{v_i\}_{i\in I}, \{u_i\}_{i\in J})$ of family of vectors $\{v_i\}_{i\in I}, \{u_i\}_{i\in J}\subseteq \mathbb{R}^d$ is said to be positive linearly dependent if there exist scalars $\{\alpha_i\}_{i\in I}$ and $\{\beta_i\}_{i\in J}$ with $\alpha_i\geq 0$ for any $i\in I$, not all equal to zero such that $\sum_{i\in I}\alpha_i v_i+\sum_{i\in J}\beta_i u_i=0$. 
\begin{definition}\label{Defn4.3}
\emph{(Constraint qualifications for CCOP)} Let $x^{\ast}$ be feasible for CCOP. We say that $x^{\ast}$ satisfies
\begin{enumerate}

    \item \emph{CCOP-LICQ} if
    % there exists no $0\neq \lambda^{I}\in N_{S}(x^{\ast})$ such that 
    the family of vectors
    %\begin{equation*}
      $  \{ \nabla g_{i}(x^{\ast}) \}_{i\in \mathcal{I}_{g}(x^{\ast})} \cup \{ \nabla h_{i}(x^{\ast}) \}_{i=1}^{p} \cup \{ e_{i} \}_{i\in I_{0}(x^{\ast})} 
    $
    %\end{equation*}
    is linearly independent;
    % where $I_{\pm}(\lambda^{I})\subseteq K \subseteq I_{0}(x^{\ast})$ and $|K|\leq n-s$.

    \item \emph{CCOP-NNAMCQ} if 
    there exists no nonzero vector $(\lambda^{g},\lambda^{h},\eta)\in \mathbb{R}^m\times \mathbb{R}^p \times \mathbb{R}^n$ with $ \lambda^{g}_{i}\geq 0,\forall i\in \mathcal{I}_{g}(x^*)$ and
     \begin{equation}
        \eta_{i} = 0,\:\forall i\in I_{\pm}(x^{\ast})\:\:\emph{and}\:\:\|\eta\|_{0}\leq n-s.\label{cond}
    \end{equation} 
    satisfying
  \begin{equation} \sum\limits_{i\in \mathcal{I}_{g}(x^*)}\lambda^{g}_{i} \nabla g_{i}(x^{\ast}) + \sum\limits_{i=1}^{p}\lambda^{h}_{i}\nabla h_{i}(x^{\ast}) + \eta = 0;\label{cond1}
  \end{equation}
%   
%     the family of vectors
%   % \begin{equation*}
%      $  \{ \nabla g_{i}(x^{\ast}) \}_{i\in \mathcal{I}_{g}(x^{\ast})} \cup \Bigl \{ \{ \nabla h_{i}(x^{\ast}) \}_{i=1}^{p} \cup \{ e_{i} \}_{i\in K} \Bigr \}$
%   % \end{equation*}
%    where $I_{\pm}(\lambda^{I})\subseteq K \subseteq I_{0}(x^{\ast})$ and $|K|\leq n-s$,  is positive linearly independent;
    
      \item \emph{CCOP-RCPLD} if the following conditions hold.
    \begin{itemize}
        \item[\emph{(i)}] The vectors $\{\nabla h_{i}(x)\}^{p}_{i=1}$ have the same rank for all $x \in \mathbb{B}_{\varepsilon}(x^{\ast})$ for some $\varepsilon>0$;
        \item[\emph{(i)}] Let $J\subseteq \{1,\dots,p\}$ be such that the set of vectors $\{\nabla h_{i}(x^{\ast})\}_{i\in J}$ is a basis for \emph{span}$\{\nabla h_{i}(x^{\ast})\}_{i=1}^{p}$.

      If there exist an index set $I\subseteq \mathcal{I}_{g}(x^{\ast})$, a nonzero vector $ (\lambda^{g},\lambda^{h},\eta)\in \mathbb{R}^m\times \mathbb{R}^p \times \mathbb{R}^n$ with $ \lambda^{g}_{i}\geq 0,\forall i\in \mathcal{I}_{g}(x^*)$ and $\eta $ such that  (\ref{cond}) and 
       \begin{equation}
       \sum\limits_{i\in I}\lambda^{g}_{i} \nabla g_{i}(x^{\ast}) + \sum\limits_{i\in J}\lambda^{h}_{i}\nabla h_{i}(x^{\ast}) + \eta = 0,\label{eqn2}
       \end{equation} hold then 
%         the family of vectors
%        \begin{equation*}
%            \{\nabla g_{i}(x^{\ast})\}_{i\in I} \cup \Bigl \{ \{\nabla h_{i}(x^{\ast})\}_{i\in J} \cup \{e_{i}\}_{i\in K} \Bigr \}
%        \end{equation*}
%        is positive linearly dependent, then the family of
the set of  vectors
        \begin{equation*}
            \{\nabla g_{i}(x^{k})\}_{i\in I} \cup \{\nabla h_{i}(x^{k})\}_{i\in J} \cup \{e_{i}\}_{i\in K}
        \end{equation*}
        is linearly dependent for all sequences $\{x^k\}$ satisfying $x^{k}\to x^{\ast}$, $x^{k}\neq x^{\ast}$ as $k\to \infty$ and all set $K$ satisfying 
    \begin{equation}
 \begin{array}{ll}
      \mbox{    if } \eta\not =0, & \mbox{ then } I_{\pm}(\eta)\subseteq K \subseteq I_{0}(x^{\ast}), \quad
    |K|\leq n-s,\\
     \mbox{    if } \eta = 0, &\mbox{ then } K = \emptyset.
   \end{array}\label{new33}
     \end{equation}
    
%    If there exist an index set $I\subseteq \mathcal{I}_{g}(x^{\ast})$ such that the family of vectors  \begin{equation*}
%            \{\nabla g_{i}(x^{\ast})\}_{i\in I} \cup \Bigl \{ \{\nabla h_{i}(x^{\ast})\}_{i\in J}  \Bigr \}
%        \end{equation*}
%        is positive linearly dependent, then the family of vectors  \begin{equation*}
%            \{\nabla g_{i}(x^{k})\}_{i\in I} \cup \{\nabla h_{i}(x^{k})\}_{i\in J} 
%        \end{equation*}
%        is linearly dependent for all sequences $\{x^k\}$ satisfying $x^{k}\to x^{\ast}$, $x^{k}\neq x^{\ast}$ as $k\to \infty$. 

    \end{itemize}
 \item \emph{CCOP-CPLD} if all condition (ii) in  CCOP-RCPLD hold with the index set $J$ is taken as a arbitrary subset of $\{1,\dots,p\}$.
 
% there exists an index set $I\subseteq \mathcal{I}_{g}(x^{\ast})$, $J\subseteq \{1,\dots,p\}$,  $(\lambda^{g},\lambda^{h},\eta)\in \mathbb{R}^m\times \mathbb{R}^p \times \mathbb{R}^n$ with $ \lambda^{g}_{i}\geq 0,\forall i\in \mathcal{I}_{g}(x^*)$ and (\ref{cond})  satisfying (\ref{eqn2}), then the set of vectors
%    \begin{equation*}
%        \{ \nabla g(x^{k}) \}_{i\in I} \cup \{ \nabla h(x^{k}) \}_{i\in J} \cup \{ e_{i} \}_{i\in K}
%    \end{equation*}
%    is linearly dependent for all sequences $\{x^k\}$ satisfying $x^{k}\to x^{\ast}$, $x^{k}\neq x^{\ast}$ as $k\to \infty$ and any set $K$ satisfying  \begin{equation}
% \begin{array}{ll}
%      \mbox{    if } \eta\not =0, & \mbox{ then } I_{\pm}(\eta)\subseteq K \subseteq I_{0}(x^{\ast}), \quad
%    |K|\leq n-s\\
%     \mbox{    if } \eta = 0, &\mbox{ then } K = \emptyset.
%   \end{array}\label{new33}
%     \end{equation}
    %where $I_{\pm}(\lambda^{I})\subseteq K \subseteq I_{0}(x^{\ast})$ and $|K|\leq n-s$ when $L = \{1\}$ otherwise $\lambda^{I} = 0$ and $K = \emptyset$.
    \item \emph{CCOP-CRCQ} if for every index sets $I\subseteq \mathcal{I}_{g}(x^{\ast})$, $J\subseteq \{1,\dots,p\}$  and any $\eta$ satifying (\ref{cond})  such that the family of vectors
    \begin{equation*}
        \{ \nabla g(x^{\ast}) \}_{i\in I} \cup \{ \nabla h(x^{\ast}) \}_{i\in J} \cup \{ e_{i} \}_{i\in K}
    \end{equation*}
    and the set of vectors
    \begin{equation*}
        \{ \nabla g(x^{k}) \}_{i\in I} \cup \{ \nabla h(x^{k}) \}_{i\in J} \cup \{ e_{i} \}_{i\in K}
    \end{equation*}
    have the same rank for all sequences $\{x^k\}$ satisfying $x^{k}\to x^{\ast}$, $x^{k}\neq x^{\ast}$ as $k\to \infty$ and any set $K$ satisfying   (\ref{new33}). 
    
    \item \emph{CCOP-RCRCQ} if the index set $J$ is taken as $\{1,\dots,p\}$ in CCOP-CRCQ.

    \item \emph{CCOP-ERCPLD} if the following conditions hold.
    \begin{itemize}
        \item[\emph{(i)}] The vectors $\{\nabla h_{i}(x)\}^{p}_{i=1}$ have the same rank for all $x \in \mathbb{B}_{\varepsilon}(x^{\ast})$ for some $\varepsilon>0$;
        \item[\emph{(ii)}] Let $J\subseteq \{1,\dots,p\}$ be such that the set of vectors $\{\nabla h_{i}(x^{\ast})\}_{i\in J}$ is a basis for \emph{span}$\{\nabla h_{i}(x^{\ast})\}_{i=1}^{p}$. If there exists an index set $I\subseteq \mathcal{I}_{g}(x^{\ast})$, $\eta$ satifying (\ref{cond}) such that the set of vectors
        \begin{equation*}
            \{\nabla g_{i}(x^{\ast})\}_{i\in I} \cup \Bigl \{ \{\nabla h_{i}(x^{\ast})\}_{i\in J} \cup \{e_{i}\}_{i\in K} \Bigr \}
        \end{equation*}
        is positive linearly dependent, then the set of vectors
        \begin{equation*}
            \{\nabla g_{i}(x^{k})\}_{i\in I} \cup \{\nabla h_{i}(x^{k})\}_{i\in J} \cup \{e_{i}\}_{i\in K}
        \end{equation*}
        is linearly dependent for all sequences $\{x^k\}$ satisfying $x^{k}\to x^{\ast}$, $x^{k}\neq x^{\ast}$ as $k\to \infty$, and any set $K$ satisfying  (\ref{new33}). 
        %where $I_{\pm}(\lambda^{I})\subseteq K \subseteq I_{0}(x^{\ast})$ and $|K|\leq n-s$ when $L = \{1\}$ otherwise $\lambda^{I} = 0$ and $K = \emptyset$.
    \end{itemize}

\end{enumerate}
\end{definition}

% The following corollary are obvious from Corollary \ref{col-3.1} since MPDSC includes CCOP as special cases.
%
%\begin{corollary}
%    For cardinality constrained optimization problems \emph{(CCOP)}, their constraint qualifications CRCQ, RCRCQ, CPLD, ERCPLD, and RCPLD coincide with their piecewise versions, respectively.
%\end{corollary}

%In {\v{C}}ervinka et al. \cite{Cerv-2016}, the authors stated that a feasible point $(x^{\ast}, y^{\ast})$ for the relaxed complementarity-type reformulation (\ref{pro-1.2}) satisfies a constraint qualification if $x^{\ast}$ satisfies the corresponding constraint qualification for the tightened nonlinear problem (\ref{pro-4.2}), which implies that constraint qualifications only depends on $x^{\ast}$ not on the pair $(x^{\ast},y^{\ast})$. Therefore, we have the following theorem.
%
%\begin{theorem}
%    Let $x^{\ast}$ be any feasible point for CCOP \emph{(\ref{pro-1.1})}. Then,
%    \begin{itemize}
%        \item[\emph{(i)}] If $\|x^{\ast}\|_{0} = s$, constraint qualifications such as MFCQ, CRCQ, RCRCQ, CPLD and RCPLD obtained from the disjunctive subspaces reformulation \emph{(\ref{pro-4.1})} coincide with those obtained from the relaxed complementarity-type reformulation \emph{(\ref{pro-1.2})}.
%        \item[\emph{(ii)}] If $\|x^{\ast}\|_{0} < s$, constraint qualifications such as MFCQ, CRCQ, RCRCQ, CPLD and RCPLD obtained from the disjunctive subspaces reformulation \emph{(\ref{pro-4.1})} are weaker than those obtained from the relaxed complementarity-type reformulation \emph{(\ref{pro-1.2})}.
%    \end{itemize}
%\end{theorem}

In Figure \ref{Figure-4.2} we summarize the relations among constraint qualifications for CCOP we discussed above.
%Again, we can see from the diagram many constraint qualifications such as CRCQ, RCRCQ, CPLD, ERCPLD, and RCPLD coincide with their piecewise versions for CCOP, respectively. 

\begin{figure}[t]
    \centering
    \begin{tikzpicture}[scale=2]
        \node[draw = black] (1) at (4,-1) {\small Error bounds for CCOP};
        \node[draw = black] (2) at (4,0) {\small CCOP-RCPLD};
        \node[draw = black] (3) at (3,1) {\small CCOP-CPLD};
        \node[draw = black] (4) at (5,2) {\small CCOP-RCRCQ};
        \node[draw = black] (5) at (3,3) {\small CCOP-NNAMCQ};
        \node[draw = black] (6) at (5,3) {\small CCOP-CRCQ};
        \node[draw = black] (7) at (4,4) {\small CCOP-LICQ};
        \node[draw = black] (8) at (5,1) {\small CCOP-ERCPLD};
        \draw[->] (7) -- (5);
        \draw[->] (7) -- (6);
        \draw[->] (5) -- (3);
        \draw[->] (6) -- (3);
        \draw[->] (6) -- (4);
        \draw[->] (3) -- (2);
        \draw[->] (4) -- (8);
        \draw[->] (8) -- (2);
        \draw[->] (2) -- (1);
    \end{tikzpicture}
    \caption{\small{Relations among new constraint qualifications for CCOP}}
    \label{Figure-4.2}
\end{figure}
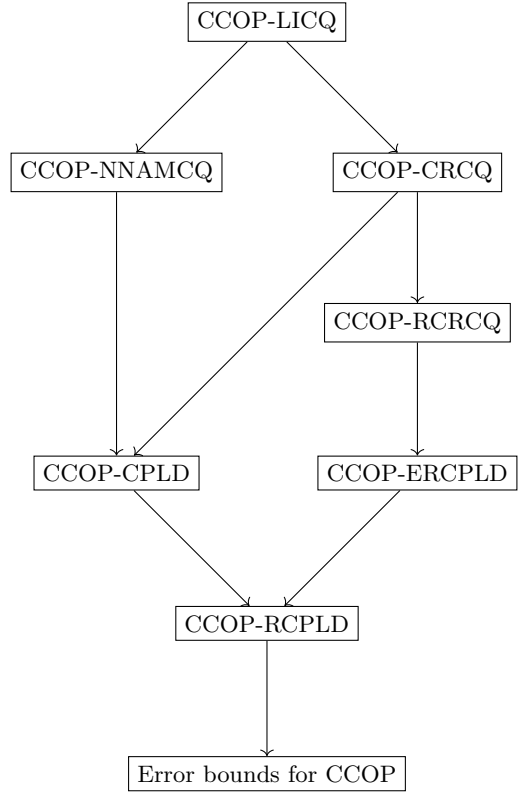

We now study optimality conditions for CCOP.
The critical cone of CCOP at $x^*$ is
\begin{equation*}
   \mathcal{C}(x^{\ast})=\left\{d \in \mathbb{R}^{n} \middle\vert \begin{array}{cl}
    \nabla g_{i}(x^{\ast})d \leq 0, & i \in \mathcal{I}_{g}(x^{\ast}) \\
    \nabla h_{i}(x^*)d = 0, & i \in\{1, \ldots, p\} \\
    \nabla \Phi_{i}(x^{\ast})d\in \bigcup\limits_{\scriptsize{I_{\pm}(x^{\ast})\subseteq \mathcal{I} \in \mathcal{I}_{s}}} \mathbb{R}_{\gamma}, & i \in\{1, \ldots, l\}\\
    \nabla f(x^*)d \leq 0 &
    \end{array}\right\}.
\end{equation*}

Applying Definition \ref{def-3.9} to CCOP (\ref{pro-4.1}), with the help of Proposition \ref{thm-4.1}, we obtain the following directional S-/M-stationary conditions for CCOP. Recall that the active set  $\mathcal{I}_{g}^*(d)$ is defined in (\ref{activeset}).
\begin{definition}\label{def-4.0}
    Let $x^{\ast}\in \mathbb{R}^n$ be a feasible point of CCOP and $d\in \mathcal{C}(x^{\ast})$.
    \begin{itemize}
    \item[\emph{(i)}] We say $x^{\ast}$ is S-stationary in direction $d$ if there exists $(\lambda^{g},\lambda^{h},\eta)\in \mathbb{R}^m\times \mathbb{R}^p\times  \mathbb{R}^n $ with $ \lambda^{g}_{i}\geq 0,\forall i\in \mathcal{I}_{g}^*(d)$ satisfying
    \begin{equation}\label{eqn-4.8}
        \begin{aligned}
            \nabla f(x^{\ast}) + \sum\limits_{i\in \mathcal{I}_{g}^*(d)}\lambda^{g}_{i} \nabla g_{i}(x^{\ast}) &+ \sum\limits_{i=1}^{p}\lambda^{h}_{i}\nabla h_{i}(x^{\ast}) + %\sum\limits_{i=1}^{n}\lambda^{I}_{i} e_{i}
           \eta  = 0,
            %\\ \lambda^{g}_{i}\geq 0,\:&\forall i\in \mathcal{I}_{g}^*(d),
        \end{aligned}
    \end{equation}
    such that
    \begin{itemize}
        \item[\emph{(a)}] if $|I_{\pm}(x^{\ast}) \cup I_{\pm}(d)| = s$, then $\eta_{i} = 0,\: \forall i\in I_{\pm}(x^{\ast}) \cup I_{\pm}(d)$.
        \item[\emph{(b)}] if $|I_{\pm}(x^{\ast}) \cup I_{\pm}(d)| < s$, then $\eta_{i} = 0,\: \forall i\in \{1,\dots,n\}$.
    \end{itemize}
    
    \item[\emph{(ii)}] We say $x^{\ast}$ is M-stationary $(d)$ if there exists $(\lambda^{g},\lambda^{h},\eta)\in \mathbb{R}^m\times \mathbb{R}^p\times  \mathbb{R}^n $ with $ \lambda^{g}_{i}\geq 0,\forall i\in \mathcal{I}_{g}^*(d)$ satisfying \emph{(\ref{eqn-4.8})} such that
    \begin{equation*}
        \eta_{i} = 0,\:\forall i\in I_{\pm}(x^{\ast})\cup I_{\pm}(d)\:\:\emph{and}\:\:\|\eta\|_{0}\leq n-s.
    \end{equation*}
    \end{itemize}
    Moreover we call $\lambda=(\lambda^g,\lambda^h,\eta)$ satisfying (i) and (ii) S-/M-stationary $(d)$ multiplier respectively.
\end{definition}

Taking direction $d=0$ in Definition \ref{def-4.0}, we have $I(d)=\emptyset$. Hence we recover the S-/M-stationary condition for CCOP which was first proposed by Pan et al. in \cite[Definition 3.1]{pan2017optimality} under the name B-/M-KKT condition. From Definition \ref{def-4.0} it is clear  that the directional M-stationary condition is in general sharper than the standard M-stationary condition while 
the S-stationary condition is in general stronger than the directional S-stationary condition

We now specify the three conditions for MPDSC in  Definition \ref{Defn3.3} to CCOP.

\begin{definition} Let $x^*$ be a feasible solution to CCOP. 
\begin{itemize}
\item[\emph{(a)}]  We say that CCOP-FOSCMS holds at $x^*$  in direction $d\in {\mathcal{C}}(x^{\ast})$ if there exists no nonzero vector  $ (\lambda^{g},\lambda^{h},\eta)\in \mathbb{R}^m\times \mathbb{R}^p \times \mathbb{R}^n $ with $ \lambda^{g}_{i}\geq 0,\forall i\in \mathcal{I}_{g}^*(d)$ and
    $
        \eta_{i} = 0,\:\forall i\in I_{\pm}(x^{\ast})\cup I_{\pm}(d)\:\:\emph{and}\:\:\|\eta\|_{0}\leq n-s
$
    satisfying
   \begin{equation}  \sum\limits_{i\in \mathcal{I}_{g}^*(d)}\lambda^{g}_{i} \nabla g_{i}(x^{\ast}) + \sum\limits_{i=1}^{p}\lambda^{h}_{i}\nabla h_{i}(x^{\ast})+ \eta = 0. 
  \label{FOSC} \end{equation}
\item[\emph{(b)}] Suppose that all functions are twice continuously differentiable. We say that the CCOP-SOSCMS holds at $x^*$  in direction $d\in {\mathcal{C}}(x^{\ast})$ if there is  no nonzero vector $(\lambda^{g},\lambda^{h},\eta)\in \mathbb{R}^m\times \mathbb{R}^p \times \mathbb{R}^n$ with $ \lambda^{g}_{i}\geq 0,\forall i\in \mathcal{I}_{g}^*(d)$ and  $
        \eta_{i} = 0,\:\forall i\in I_{\pm}(x^{\ast})\cup I_{\pm}(d)\:\:\emph{and}\:\:\|\eta\|_{0}\leq n-s
$ satisfying (\ref{FOSC}) and the following second-order condition
$$ d^{T} \nabla^{2}_{x} \mathcal{L}^0(x^{\ast},\lambda^{g},\lambda^{h}) d\geq 0,$$
where 
$\mathcal{L}^0(x,\lambda^{g},\lambda^{h}) :=  \langle \lambda^{g},g(x) \rangle + \langle \lambda^{h},h(x) \rangle.$\item[\emph{(c)}] We say that CCOP-LICQ holds at $x^*$  in direction $d\in {\mathcal{C}}(x^{\ast})$  if
    % there exists no $0\neq \lambda^{I}\in N_{S}(x^{\ast})$ such that 
    the family of vectors
    %\begin{equation*}
      $$  \{ \nabla g_{i}(x^{\ast}) \}_{i\in \mathcal{I}_{g}(x^{\ast})} \cup \{ \nabla h_{i}(x^{\ast}) \}_{i=1}^{p} \cup \{ e_{i} \}_{i\in I_{0}(x^{\ast})\cap I_0(d)} 
    $$
    %\end{equation*}
    is linearly independent;
\end{itemize} 
\end{definition}
When $d=0$, since $\mathcal{I}_{g}^*(0)=\mathcal{I}_{g}(x^*)$ CCOP-FOSCMS reduces to CCOP-NNAMCQ, CCOP-LICQ in direction $d$ reduces to CCOP-LICQ defined in Definition \ref{Defn4.3} repsectively. It is obvious that in general we have
\begin{eqnarray}
&& \mbox{CCOP-NNAMCQ}\Longrightarrow\mbox{CCOP-FOSCMS} (d) \Longrightarrow  \mbox{CCOP-SOSCMS} (d) \label{eqn54}\\
&& \mbox{CCOP-LICQ}  \Longrightarrow \mbox{CCOP-LICQ} (d). \label{eqn55}\end{eqnarray}  

We denote the Lagrangian function of CCOP by
\begin{equation*}
    \mathcal{L}(x,\lambda^{g},\lambda^{h}) := f(x) + \langle \lambda^{g},g(x) \rangle + \langle \lambda^{h},h(x) \rangle .
\end{equation*}
The following optimality conditions are the CCOP version of Theorem \ref{thm-3.4.}.
\begin{theorem}
    Let $x^{\ast}$ be a local optimal solution of CCOP. Then the following first and second order necessary optimality conditions hold:
    \begin{itemize}
    \item[\emph{(i)}] Suppose that MSCQ holds at $x^*$ in direction  $d\in \mathcal{C}(x^{\ast})$. Then $x^*$ is M-stationary in direction $d$ and  there exists an M-stationary $(d)$ multiplier $\lambda = (\lambda^{g}, \lambda^{h},\eta)$  such that the second-order condition holds:
    \begin{equation}\label{second-order-new}
    d^T \nabla_x^2 \mathcal{L}(x^*,\lambda^{g},\lambda^{h})d\geq 0
%    d^T  ( \sum\limits_{i\in \mathcal{I}_{g}(d)}\lambda^{g}_{i}  \nabla^2 g_{i}(x^{\ast}) + \sum\limits_{i=1}^{p}\lambda^{h}_{i} \nabla^2 h_{i}(x^{\ast})  )d\geq 0.
    \end{equation}
    
    \item[\emph{(ii)}] For $d\in \mathcal{C}(x^{\ast})$, assume that CCOP-LICQ $(d)$ is fulfilled at $x^{\ast}$. Then, there exists an S-stationary $(d)$ multiplier $\lambda = (\lambda^{g}, \lambda^{h}, \eta)$  such that the second-order condition \emph{(\ref{second-order-new})} holds.
    \end{itemize}
    Conversely, let $x^{\ast}$ be a feasible solution of CCOP. Suppose that for every nonzero critical direction $0\neq d\in \mathcal{C}(x^{\ast})$ there exists an S-stationary $(d)$ multiplier $\lambda = (\lambda^{g}, \lambda^{h},\eta)$ such that the second-order condition strictly holds:
    \begin{equation*}
      d^T \nabla_x^2 \mathcal{L}(x^*,\lambda^{g},\lambda^{h})d > 0.
    \end{equation*}
    Then $x^{\ast}$ is a strict local minimizer of CCOP \emph{(\ref{pro-4.1})}.
\end{theorem}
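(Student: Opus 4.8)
The plan is to obtain this result as a direct specialization of Theorem \ref{thm-3.4.}, viewing CCOP (\ref{pro-1.1}) through its equivalent reformulation (\ref{pro-4.1}), which is an instance of MPDSC (\ref{MPDSC}) with $l=1$, $\Phi_1(x)=x$, and $S=\bigcup_{\mathcal{I}\in\mathcal{I}_s}\mathbb{R}_{\mathcal{I}}$. First I would record that, since $\Phi_1$ is linear with $\nabla\Phi_1(x^{\ast})=I$, the linearization cone and the critical cone $\mathcal{C}(x^{\ast})$ of this MPDSC coincide with the ones written above for CCOP, and the Hessian of the MPDSC Lagrangian satisfies $\nabla^2_x\mathcal{L}(x^{\ast},\lambda^g,\lambda^h,\lambda^{\Phi})=\nabla^2_x\mathcal{L}(x^{\ast},\lambda^g,\lambda^h)$ because $\langle\lambda^{\Phi},\Phi(x)\rangle=\langle\lambda^{\Phi},x\rangle$ is affine in $x$; this identifies the second-order conditions (\ref{second-order}) and (\ref{second-order-new}). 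Likewise the constraint system of (\ref{pro-4.1}) is literally the CCOP constraint system, so MSCQ in direction $d$ means the same in both settings.

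The heart of the proof is the translation of the abstract cone-membership conditions of Definition \ref{def-3.9} into the explicit index conditions of Definition \ref{def-4.0}, which I would carry out using Proposition \ref{thm-4.1} (Table \ref{Table-4.1}). For the M-stationarity case, the requirement is $\lambda^{\Phi}_1=\eta\in N_S(x^{\ast};\nabla\Phi_1(x^{\ast})d)=N_S(x^{\ast};d)$; by the table this set is $\bigcup_{I_{\pm}(x^{\ast})\cup I_{\pm}(d)\subseteq\mathcal{I}\in\mathcal{I}_s}\mathbb{R}_{\mathcal{I}}^{\perp}$, and I would verify that $\eta$ lies in it if and only if $\eta_i=0$ for all $i\in I_{\pm}(x^{\ast})\cup I_{\pm}(d)$ and $\|\eta\|_0\le n-s$. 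The necessity is immediate; for sufficiency, since $d\in T_S(x^{\ast})$ forces $|I_{\pm}(x^{\ast})\cup I_{\pm}(d)|\le s$ and $I_{\pm}(\eta)$ is disjoint from $I_{\pm}(x^{\ast})\cup I_{\pm}(d)$ with $|I_{\pm}(\eta)|\le n-s$, one can enlarge $I_{\pm}(x^{\ast})\cup I_{\pm}(d)$ to a set $\mathcal{I}\in\mathcal{I}_s$ disjoint from $I_{\pm}(\eta)$, whence $\eta\in\mathbb{R}_{\mathcal{I}}^{\perp}$; the $|I_{\pm}(x^{\ast})\cup I_{\pm}(d)|=s$ sub-case of the table is subsumed. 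For the S-stationarity case, the requirement $\eta\in\hat{N}_{T_S(x^{\ast})}(d)=\bigcap_{r\in I(x^{\ast})\cap I(d)}S_r^{\perp}$ unwinds through the table directly to the two alternatives in Definition \ref{def-4.0}(i), cases (a) and (b).

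For part (ii) I would further show that CCOP-LICQ$(d)$ implies MPDSC-LICQ$(d)$. By Definition \ref{Defn3.3}(c), MPDSC-LICQ$(d)$ asks that there be no nonzero $(\lambda^g,\lambda^h,\eta)$ with $\eta\in\sum_{r\in I(x^{\ast})\cap I(d)}S_r^{\perp}=\sum_{I_{\pm}(x^{\ast})\cup I_{\pm}(d)\subseteq\mathcal{I}\in\mathcal{I}_s}\mathbb{R}_{\mathcal{I}}^{\perp}$ solving (\ref{FOSCMS}); since the union of the complements of all such $\mathcal{I}$ equals exactly $I_0(x^{\ast})\cap I_0(d)$ (using $|I_{\pm}(x^{\ast})\cup I_{\pm}(d)|\le s<n$), that sum equals $\textrm{span}\{e_i:i\in I_0(x^{\ast})\cap I_0(d)\}$, so MPDSC-LICQ$(d)$ is the linear independence of $\{\nabla g_i(x^{\ast})\}_{i\in\mathcal{I}_g^*(d)}\cup\{\nabla h_i(x^{\ast})\}_{i=1}^{p}\cup\{e_i\}_{i\in I_0(x^{\ast})\cap I_0(d)}$, which is implied by CCOP-LICQ$(d)$ because the latter uses the larger index set $\mathcal{I}_g(x^{\ast})\supseteq\mathcal{I}_g^*(d)$. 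With these identifications in place, parts (i) and (ii) follow from Theorem \ref{thm-3.4.}(i) and (ii), and the converse from the converse part of Theorem \ref{thm-3.4.}, after noting that an S-stationary$(d)$ multiplier of CCOP is one for the MPDSC and the two Hessians agree. I expect the only delicate step to be the index bookkeeping just described -- verifying that the union/intersection of the coordinate subspaces $\mathbb{R}_{\mathcal{I}}^{\perp}$ over all admissible $\mathcal{I}\in\mathcal{I}_s$ collapses to the stated support conditions on $\eta$, which hinges on the cardinality bound $|I_{\pm}(x^{\ast})\cup I_{\pm}(d)|\le s<n$; the rest is a mechanical specialization of the MPDSC results.
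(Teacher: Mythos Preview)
Your proposal is correct and matches the paper's approach exactly: the paper presents this theorem simply as ``the CCOP version of Theorem~\ref{thm-3.4.}'' without a separate proof, treating it as an immediate specialization via the reformulation (\ref{pro-4.1}) and the cone formulas of Proposition~\ref{thm-4.1}. Your write-up in fact supplies more detail than the paper does---the explicit verification that $\nabla^2_x\mathcal{L}(x^{\ast},\lambda^g,\lambda^h,\lambda^{\Phi})=\nabla^2_x\mathcal{L}(x^{\ast},\lambda^g,\lambda^h)$ because $\Phi_1(x)=x$ is affine, the index bookkeeping translating $N_S(x^{\ast};d)$ and $\hat N_{T_S(x^{\ast})}(d)$ into the support conditions of Definition~\ref{def-4.0}, and the computation $\sum_{r\in I(x^{\ast})\cap I(d)}S_r^{\perp}=\mathrm{span}\{e_i:i\in I_0(x^{\ast})\cap I_0(d)\}$ showing CCOP-LICQ$(d)$ implies MPDSC-LICQ$(d)$---all of which the paper leaves implicit.
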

Since direction M-stationary condition (d) implies M-stationary condition while S-stationary condition implies S-stationary condition  (d) respectively, we have the following corollary.
\begin{corollary} \label{Cor4.1}Let $x^{\ast}$ be a local optimal solution of CCOP. Then the following first and second order necessary optimality conditions hold:
%    \begin{itemize}
%    \item[\emph{(i)}]
 Suppose that either CCOP-SOSCMS holds at $x^*$ in direction  $d\in \mathcal{C}(x^{\ast})$ or one of the constraint qualification presented in Figure \ref{Figure-4.2} holds. Then $x^*$ is M-stationary  and  there exists an M-stationary multiplier $\lambda = (\lambda^{g}, \lambda^{h}, \eta)$  such that the second-order condition (\ref{second-order-new}) holds.
%    \begin{equation}\label{second-order-new}
%    d^T \nabla_x^2 \mathcal{L}(x^*,\lambda^{g},\lambda^{h})d\geq 0
%%    d^T  ( \sum\limits_{i\in \mathcal{I}_{g}(d)}\lambda^{g}_{i}  \nabla^2 g_{i}(x^{\ast}) + \sum\limits_{i=1}^{p}\lambda^{h}_{i} \nabla^2 h_{i}(x^{\ast})  )d\geq 0.
%    \end{equation}
    
%    \item[\emph{(ii)}] For $d\in \mathcal{C}(x^{\ast})$, assume that CCOP-LICQ $(d)$ is fulfilled at $x^{\ast}$. Then, there exists an S-stationary  multiplier $\lambda = (\lambda^{g}, \lambda^{h}, \eta)$  such that the second-order condition (\ref{second-order-new}) holds.
  %  \end{itemize}
    Conversely, let $x^{\ast}$ be a feasible solution of CCOP. Suppose that for every nonzero critical direction $0\neq d\in \mathcal{C}(x^{\ast})$ there exists an S-stationary $(d)$ multiplier $\lambda = (\lambda^{g}, \lambda^{h}, \eta)$ associated with $x^{\ast}$ such that the second-order condition (\ref{second-order-new}) strictly holds.
%    :
%    \begin{equation*}
%      d^T \nabla_x^2 \mathcal{L}(x^*,\lambda^{g},\lambda^{h})d > 0.
%    \end{equation*}
    Then $x^{\ast}$ is a strict local minimizer of CCOP \emph{(\ref{pro-4.1})}.
\end{corollary}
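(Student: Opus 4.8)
The plan is to derive both assertions directly from the preceding theorem (the CCOP analogue of Theorem~\ref{thm-3.4.}) by matching hypotheses and by passing from ``directional'' to ``ordinary'' stationarity. For the necessary part, fix $d\in\mathcal{C}(x^{\ast})$; the first step is to check that each of the listed hypotheses forces MSCQ to hold at $x^{\ast}$ in direction $d$. If CCOP-SOSCMS holds in direction $d$, this is precisely Gfrerer's second-order sufficient condition for metric subregularity \cite[Theorem~4.3]{Gfre-2013} specialized to CCOP (as recorded in the remark after Definition~\ref{Defn3.3}); via the implications (\ref{eqn54}) the same then follows from CCOP-FOSCMS or CCOP-NNAMCQ. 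If instead one of the constraint qualifications in Figure~\ref{Figure-4.2} holds, then the arrows in that figure show it implies CCOP-RCPLD, which by Theorem~\ref{thm-3.17} applied to the MPDSC reformulation of CCOP implies the error bound property, i.e.\ MSCQ with $d=0$; together with the trivial inclusion $V_{\rho,\delta}(d)\subseteq\rho\mathbb{B}$ this yields MSCQ in direction $d$. In every case, part~(i) of the theorem now produces an M-stationary $(d)$ multiplier $\lambda=(\lambda^{g},\lambda^{h},\eta)$ satisfying the second-order condition (\ref{second-order-new}).

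The second step is to observe that any M-stationary $(d)$ multiplier is automatically an M-stationary multiplier in the sense of Definition~\ref{def-4.0} taken at $d=0$ (equivalently, the B-/M-KKT condition of \cite{pan2017optimality}): the sign-constrained entries of $\lambda^{g}$ are indexed by $\mathcal{I}_{g}^{*}(d)\subseteq\mathcal{I}_{g}(x^{\ast})$ and extend by zero, the requirement $\eta_{i}=0$ for $i\in I_{\pm}(x^{\ast})\cup I_{\pm}(d)$ already gives $\eta_{i}=0$ for $i\in I_{\pm}(x^{\ast})$, and $\|\eta\|_{0}\le n-s$ is unchanged. Hence $x^{\ast}$ is M-stationary with this same $\lambda$, which still satisfies (\ref{second-order-new}); this settles the first assertion.

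The converse requires nothing new: its hypothesis --- for every $0\neq d\in\mathcal{C}(x^{\ast})$ there exists an S-stationary $(d)$ multiplier with $d^{T}\nabla^{2}_{x}\mathcal{L}(x^{\ast},\lambda^{g},\lambda^{h})d>0$ --- is word for word the hypothesis of the converse part of the preceding theorem, and the conclusion, strict local minimality of $x^{\ast}$, is identical, so it transfers verbatim. The only place that demands genuine care is the first step above: confirming that CCOP-SOSCMS in direction $d$ really delivers directional MSCQ, and that every node of Figure~\ref{Figure-4.2} genuinely funnels into the error bound property. Both are either direct citations (\cite[Theorem~4.3]{Gfre-2013}) or specializations to CCOP of results already established for MPDSC in Section~\ref{section-3}, so no new difficulty arises.
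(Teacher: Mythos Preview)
Your proposal is correct and follows the same route the paper (implicitly) takes: the paper states the corollary as an immediate consequence of the preceding theorem together with the remark that ``M-stationary $(d)$ implies M-stationary'', while you spell out in addition why each listed hypothesis (CCOP-SOSCMS via \cite[Theorem~4.3]{Gfre-2013}, or any CQ in Figure~\ref{Figure-4.2} via Theorem~\ref{thm-3.17}) yields directional MSCQ. There is no substantive difference in strategy, only in the level of detail.
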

Corollary \ref{Cor4.1} has improved the first order necessary optimality conditions in Pan et al. in \cite[Theorems 3.2]{pan2017optimality}(ii) 
% and the second-order optimality conditions  in Pan et al. in \cite[Theorems 4.1 and 4.2]{pan2017optimality}(i) and (ii) since CCOP-LICQ (d) and 
since CCOP-SOSCMS is weaker than  R-MFCQ. It also improved the  second order sufficient in Pan et al. in \cite[Theorems  4.2]{pan2017optimality} since S-stationarity  implies S-stationarity (d).

We now conclude this section with the following application of our error bound results. 
\begin{theorem} Let $x^*\in \mathcal{F}_{\rm CCOP}$ where $\mathcal{F}_{\rm CCOP}$ denotes the  feasible region of  CCOP. If one of the constraint qualification presented in Figure \ref{Figure-4.2} holds, then   there exist $\alpha\geq 0$ and $\rho>0$ such that
    \begin{equation*}
        d_{\mathcal{F}_{\rm CCOP}}(x)\leq \alpha \Bigl( \sum\limits_{i=1}^{m} \max\{g_{i}(x),0\} + \sum\limits_{i=1}^{p} |h_{i}(x)|\Bigr )\qquad 
    \forall    x\in  \mathbb{B}_\rho (x^{\ast})\cap S.
    \end{equation*} 
    Moreover if $x^*$ is a local optimal solution of CCOP then it is also a local optimal solution of the exact penalty problem (\ref{penalty}) for any $\mu \geq L_f \alpha$ where $L_f$ is the Lipschitz constant of $f$ at $x^*$.
    
\end{theorem}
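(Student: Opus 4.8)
The plan is to derive the restricted error bound directly from the MPDSC theory of Section \ref{section-3} and then run the classical nearest-point argument to obtain the exact penalization.

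\textbf{Step 1: the error bound over $S$.} First I would observe that CCOP in the form \emph{(\ref{pro-4.1})} is the instance of MPDSC \emph{(\ref{MPDSC})} with $l=1$, $\Phi_{1}(x)=x$, and $S=\bigcup_{\mathcal{I}\in\mathcal{I}_{s}}\mathbb{R}_{\mathcal{I}}$. By Figure \ref{Figure-4.2}, each constraint qualification listed there implies CCOP-RCPLD, which is precisely MPDSC-RCPLD for this reformulation. Theorem \ref{thm-3.17} then yields the error bound property for CCOP at $x^{\ast}$: there exist $\alpha\geq 0$ and $\rho>0$ such that
\[
d_{\mathcal{F}_{\rm CCOP}}(x)\leq\alpha\Bigl(\sum_{i=1}^{m}\max\{g_{i}(x),0\}+\sum_{i=1}^{p}|h_{i}(x)|+d_{S}(x)\Bigr)\qquad\forall x\in\mathbb{B}_{\rho}(x^{\ast}),
\]
where the neighborhood is $x^{\ast}+V_{\rho,\delta}(0)=\mathbb{B}_{\rho}(x^{\ast})$ because the directional neighborhood in the direction $0$ is an ordinary ball. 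Restricting to $x\in\mathbb{B}_{\rho}(x^{\ast})\cap S$ gives $d_{S}(x)=0$, which is exactly the displayed inequality in the theorem.

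\textbf{Step 2: from the error bound to exact penalization.} Now assume $x^{\ast}$ is a local minimizer of CCOP. Pick $\rho_{1}>0$ with $f(\bar x)\geq f(x^{\ast})$ for all $\bar x\in\mathcal{F}_{\rm CCOP}\cap\mathbb{B}_{\rho_{1}}(x^{\ast})$, and $\rho_{2}>0$, $L_{f}\geq 0$ with $f$ being $L_{f}$-Lipschitz on $\mathbb{B}_{\rho_{2}}(x^{\ast})$ (possible since $f$ is smooth). Set $\widehat{\rho}:=\min\{\rho,\rho_{1}/2,\rho_{2}/2\}$ and take any $x\in S\cap\mathbb{B}_{\widehat{\rho}}(x^{\ast})$. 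Since $\mathcal{F}_{\rm CCOP}$ is closed, choose a projection $\bar x$ of $x$ onto $\mathcal{F}_{\rm CCOP}$; then $\|x-\bar x\|=d_{\mathcal{F}_{\rm CCOP}}(x)\leq\|x-x^{\ast}\|<\widehat{\rho}$ because $x^{\ast}\in\mathcal{F}_{\rm CCOP}$, and hence $\|\bar x-x^{\ast}\|<2\widehat{\rho}\leq\min\{\rho_{1},\rho_{2}\}$, so $x,\bar x\in\mathbb{B}_{\rho_{2}}(x^{\ast})$ and $\bar x\in\mathcal{F}_{\rm CCOP}\cap\mathbb{B}_{\rho_{1}}(x^{\ast})$. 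Combining local optimality at $x^{\ast}$, the Lipschitz estimate for $f$, and the error bound of Step 1,
\[
f(x)\geq f(\bar x)-L_{f}\|x-\bar x\|\geq f(x^{\ast})-L_{f}\,d_{\mathcal{F}_{\rm CCOP}}(x)\geq f(x^{\ast})-L_{f}\alpha\Bigl(\sum_{i=1}^{m}\max\{g_{i}(x),0\}+\sum_{i=1}^{p}|h_{i}(x)|\Bigr).
\]
Consequently, for any $\mu\geq L_{f}\alpha$ the penalized objective satisfies $f(x)+\mu\bigl(\sum_{i=1}^{m}\max\{g_{i}(x),0\}+\sum_{i=1}^{p}|h_{i}(x)|\bigr)\geq f(x^{\ast})$; since the penalty term vanishes at the feasible point $x^{\ast}$, this shows $x^{\ast}$ is a local minimizer of the exact penalty problem \emph{(\ref{penalty})} over $\{x:\|x\|_{0}\leq s\}$.

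\textbf{Main obstacle.} The estimates themselves are routine once the projection $\bar x$ is used, so no $\varepsilon$-perturbation is needed; the parts requiring care are the bookkeeping that each constraint qualification in Figure \ref{Figure-4.2} really funnels through CCOP-RCPLD into Theorem \ref{thm-3.17} so that the very same constant $\alpha$ is available in the penalty bound, and the choice of $\widehat{\rho}$ ensuring that the projected point $\bar x$ stays simultaneously inside the local-optimality neighborhood and the Lipschitz neighborhood.
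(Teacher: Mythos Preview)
Your proposal is correct and follows essentially the same route as the paper: reduce every CQ in Figure~\ref{Figure-4.2} to CCOP-RCPLD, invoke Theorem~\ref{thm-3.17} for the error bound, and then apply Clarke's exact penalty principle. The only difference is that the paper cites \cite[Proposition 2.4.3]{Clar-1990} and \cite[Theorem 4.2]{ye} for the penalty step, whereas you have written out that nearest-point argument explicitly.
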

\begin{proof} The weakest constraint qualification in Figure \ref{Figure-4.2} is CCOP-RCPLD. The error bound property holds by Theorem \ref{thm-3.17}. With the error bound property and the continuous differentiability of $f$, the exact penalty result follows by using the Clarke's exact penalty principle; see \cite[Proposition 2.4.3]{Clar-1990} or \cite[Theorem 4.2]{ye}.
\end{proof}


\begin{thebibliography}{99}

\bibitem{Aubin} J-P. Aubin and H. Frankowska. \emph{Set-valued Analysis}. Boston, Birkh\"{a}user, 2009.

\bibitem{hoheisel2009mathematical}
W. Achtziger and C. Kanzow. \emph{Mathematical programs with vanishing constraints: optimality conditions and constraint qualifications}. Math. Program., \textbf{114}, (2008), 69-99.

\bibitem{adam2016normally}
L. Adam, M. {\v{C}}ervinka, and M. Pi{\v{s}}t{\v{e}}k. \emph{Normally admissible stratifications and calculation of normal cones to a finite union of polyhedral sets}. Set-Valued Var. Anal., \textbf{24}, (2016), 207-229.

\bibitem{bai2019directional}
K. Bai, J. J. Ye, and J. Zhang. \emph{Directional quasi-/pseudo-normality as sufficient conditions for metric subregularity}. SIAM J. Optim., \textbf{29}, (2019), 2625-2649.

\bibitem{Bauschke2014}
H. H. Bauschke, D. R. Luke, H. M. Phan, and X. Wang. \emph{Restricted normal cones and sparsity optimization with affine constraints}. Found. Comput. Math., \textbf{14}, (2014), 63-83.

\bibitem{Beck-2013}
A. Beck and Y. C. Eldar. \emph{Sparsity constrained nonlinear optimization: Optimality conditions and algorithms}. SIAM J. Optim., \textbf{23}, (2013), 1480-1509.

\bibitem{Beck-Hallak} A. Beck and N. Hallak. \emph{On the minimization over sparse symmetric sets projections, optimality conditions, and algorithms}. Math. Oper. Res., \textbf{41}, (2016), 196-223.

\bibitem{Benk-2021}
M. Benko, M. {\v{C}}ervinka, and T. Hoheisel. \emph{Sufficient conditions for metric subregularity of constraint systems with applications to disjunctive and ortho-disjunctive programs}. Set-Valued Var. Anal., \textbf{30}, (2022), 143-177.

\bibitem{Benk-2017}
M. Benko and H. Gfrerer. \emph{On estimating the regular normal cone to constraint systems and stationarity conditions}. Optim., \textbf{66}, (2017), 61-92.

\bibitem{benko2018new}
M. Benko and H. Gfrerer. \emph{New verifiable stationarity concepts for a class of mathematical programs with disjunctive constraints}. Optim., \textbf{67}, (2018), 1-23.

\bibitem{Benk-2019}
M. Benko, H. Gfrerer, and J. Outrata. \emph{Calculus for directional limiting normal cones and subdifferentials}. Set-Valued Var. Anal., \textbf{27}, (2019), 713-745.

\bibitem{Benk-2022}
M. Benko, H. Gfrerer, J. J. Ye, J. Zhang, and J. Zhou. \emph{Second-order optimality conditions for general nonconvex optimization problems and variational analysis of disjunctive systems}. arXiv:2203.10015.

\bibitem{bertsimas2009algorithm}
D. Bertsimas and R. Shioda. \emph{Algorithm for cardinality-constrained quadratic optimization}. Comput. Optim. Appl., \textbf{43}, (2009), 1-22.

\bibitem{Bonn-2000}
J. F. Bonnans and A. Shapiro. \emph{Perturbation Analysis of Optimization Problems}. Springer, New York, 2000.

\bibitem{bucher2018second}
M. Bucher and A. Schwartz. \emph{Second-order optimality conditions and improved convergence results for regularization methods for cardinality-constrained optimization problems}. J. Optim. Theory Appl., \textbf{178}, (2018), 383-410.

\bibitem{Burd-2016}
O. P. Burdakov, C. Kanzow, and A. Schwartz. \emph{Mathematical programs with cardinality constraints: reformulation by complementarity-type conditions and a regularization method}. SIAM J. Optim., \textbf{26}, (2016), 397-425.

\bibitem{Cerv-2016}
M. {\v{C}}ervinka, C. Kanzow, and A. Schwartz. \emph{Constraint qualifications and optimality conditions for optimization problems with cardinality constraints}. Math. Program., \textbf{160}, (2016), 353-377.

\bibitem{Clar-1990}
F. H. Clarke. \emph{Optimization and Nonsmooth Analysis}. Classics Appl. Math. 5, SIAM, Philadelphia, PA, 1990.

\bibitem{Fleg-2007}
M. L. Flegel, C. Kanzow, and J. Outrata. \emph{Optimality conditions for disjunctive programs with application to mathematical programs with equilibrium constraints}. Set-Valued Var. Anal., \textbf{15}, (2007), 139-162.

\bibitem{Gfre-2013}
H. Gfrerer. \emph{On directional metric subregularity and second-order optimality conditions for a class of nonsmooth mathematical programs}. SIAM J. Optim. \textbf{23}, (2013), 632-665.

\bibitem{Gfre-2014}
H. Gfrerer. \emph{Optimality conditions for disjunctive programs based on generalized differentiation with application to mathematical programs with equilibrium constraints}. SIAM J. Optim., \textbf{24}, (2014), 898-931.

\bibitem{Gfre-2019}
H. Gfrerer. \emph{Linearized M-stationarity conditions for general optimization problems}. Set-Valued Var. Anal., \textbf{27}, (2019), 819-840.

\bibitem{Ginchev-2010}
I. Ginchev and B. S. Mordukhovich. \emph{On directionally dependent subdifferentials}. C.R. Bulg. Acad. Sci., \textbf{64}, (2011), 497–508.

\bibitem{henrion2008}
R. Henrion and J. Outrata. \emph{On calculating the normal cone to a finite union of convex polyhedra}. Optim., \textbf{57}, (2008), 57-78.

\bibitem{kanzow2021sequential}
C. Kanzow, A. B. Raharja, and A. Schwartz. \emph{Sequential optimality conditions for cardinality-constrained optimization problems with applications}. Comput. Optim. Appl., \textbf{80}, (2021), 185-211.

\bibitem{krulikovski-A-S} E.H.M. Krulikovski, A.A. Ribeiro and M. Sachine. \emph{On the weak stationarity conditions for mathematical programs with cardinality constraints: a unified approach}. Appl. Math. Optim., \textbf{84}, (2021), 3451-3473.

\bibitem{liang2021ye}
Y.-C. Liang and J. J. Ye. \emph{Optimality conditions and exact penalty for mathematical programs with switching constraints}. J. Optim. Theory Appl., \textbf{190}, (2021), 1-31.
\bibitem{Lu2015} Z. Lu. \emph{Optimization over sparse symmetric sets via a nonmonotone projected gradient method}, ArXiv: 1509.08581, (2015). 
 
\bibitem{luo1996mathematical}
Z. Q. Luo, J. S. Pang, and D. Ralph. \emph{Mathematical Programs with Equilibrium Constraints.} Cambridge University Press, Cambridge, 1996.

\bibitem{Mehl-2020-O}
P. Mehlitz. \emph{On the linear independence constraint qualification in disjunctive programming}. Optim., \textbf{69}, (2020), 2241-2277.

\bibitem{mehlitz2020stationarity}
P. Mehlitz. \emph{Stationarity conditions and constraint qualifications for mathematical programs with switching constraints}. Math. Program., \textbf{181}, (2020), 149-186.

\bibitem{Mehl-2020-A}
P. Mehlitz. \emph{Asymptotic stationarity and regularity for nonsmooth optimization problems}. J. Nonsmooth Anal. Optim., \textbf{1}, (2020), 6575.

\bibitem{Mord-2006}
B. S. Mordukhovich. \emph{Variational Analysis and Generalized Differentiation, Vol. 1: Basic Theory, Vol. 2: Applications}. Springer, Berlin, 2006.

\bibitem{Mord-2018}
B. S. Mordukhovich. \emph{Variational Analysis and Applications}. Monographs in
Mathematics, Springer, Cham, Switzerland, 2018.

\bibitem{pan2015}
L. L. Pan, N. H. Xiu, and S. L. Zhou. \emph{On solutions of sparsity constrained optimization}. J. Oper. Res. Soc. China, \textbf{3}, (2015), 421-439.

\bibitem{pan2017optimality}
L. L. Pan, N. H. Xiu, and J. Fan. \emph{Optimality conditions for sparse nonlinear programming}. Sci. China Math., \textbf{60}, (2017), 759-776.

\bibitem{Rock-2009}
R. T. Rockafellar and R. J. Wets. \emph{Variational Analysis}. Springer, Berlin, 1998.

\bibitem{Till-2021}
A. M. Tillmann, D. Bienstock, A. Lodi, and A. Schwartz. \emph{Cardinality minimization, constraints, and regularization: a survey}. Preprint arXiv:2106.09606.

%\bibitem{Xiao} Z. Xiao Optimality conditions for cardinality constrained optimization problems. Master Thesis. University of Victoria, 2022.

\bibitem{Xu-2021}
M. Xu and J. J. Ye. \emph{Relaxed constant positive linear dependence constraint qualification for disjunctive programs}. Preprint arXiv:2204.09869.

\bibitem{ye} J.J. Ye. \emph{Exact penalty principle}. Nonlinear. Anal., \textbf{75}, (2012), 1642-1654.

\bibitem{ye2018verifiable}
J. J. Ye and J. Zhou. \emph{Verifiable sufficient conditions for the error bound property of second-order cone complementarity problems}. Math. Program., \textbf{171}, (2018), 361-395.

\end{thebibliography}
\end{document}